\makeatletter  \@addtoreset{equation}{section} \makeatother
\newtheorem{definition}{Definition}[section]
\newtheorem{lemma}{Lemma}[section]
\newtheorem{theorem}{Theorem}[section]
\newtheorem{corollary}{Corollary}[section]
\newtheorem{proposition}{Proposition}[section]
\newtheorem{remark}{Remark}[section]
\DeclareMathOperator{\N}{\mathbb{N}}
\DeclareMathOperator{\R}{\mathbb{R}}
\DeclareMathOperator{\C}{\mathbb{C}}
\DeclareMathOperator{\T}{\mathbb{T}}
\DeclareMathOperator{\TT}{\mathbb{T}}
\DeclareMathOperator{\D}{\mathbb{D}}
\DeclareMathOperator{\Z}{\mathbb{Z}}
\newcommand{\NN}{\mathbb{N}}
\newcommand{\dd}{\mathrm{d}}
\begin{document}

\title[V-states for active scalar equations]{Unified  theory on  V-states structures  for active scalar equations}
\author{Taoufik Hmidi}
\address{IRMAR, Universit\'e de Rennes 1, Campus de Beaulieu, 35042 Rennes cedex, France}
\email{thmidi@univ-rennes1.fr}
\author{Liutang Xue}
\address{Laboratory of Mathematics and Complex Systems (MOE), School of Mathematical Sciences, Beijing Normal University, Beijing 100875, P.R. China}
\email{xuelt@bnu.edu.cn}
\author{Zhilong Xue}
\address{Laboratory of Mathematics and Complex Systems (MOE), School of Mathematical Sciences, Beijing Normal University, Beijing 100875, P.R. China}
\email{zhilongxue@mail.bnu.edu.cn}

{\thanks{The authors have been partially supported by National Key Research and Development Program of China (No. 2020YFA0712900).
T. Hmidi has been supported by Tamkeen under the NYU Abu Dhabi Research Institute grant.
L. Xue has been supported by National Natural Science Foundation of China (Nos. 12271045, 11771043).}}

\subjclass[2010]{35Q35, 35Q86, 76U05, 35B32, 35P30}
\keywords{V-states, active scalar equations, geophysical flows, completely monotone kernels, bifurcation theory.}


\begin{abstract}
This paper revolves around the existence of V-states close to Rankine vortices for active scalar equations with completely monotone kernels. This allows to unify various results on this topic related to geophysical flows. A key ingredient is a new factorization formula for the spectrum using a universal function which is  independent of the model. This function  admits  several interesting properties allowing to track the spectrum distribution.  \end{abstract}

\maketitle

\tableofcontents

\section{Introduction}
In this paper we consider the Cauchy problem of the following two-dimensional (abbr. 2D) active scalar equation
\begin{eqnarray}\label{eq:ASE}	
\left\{\begin{array}{ll}
  \partial_t\omega + (v\cdot \nabla) \omega=0, &\,\,\text{   $(t,\mathbf{x})\in (0,\infty)\times \mathbf{D}$}, \\
  v=\nabla^\perp \psi, & \,\,\text{   $(t,\mathbf{x})\in (0,\infty)\times \mathbf{D}$}, \\
  \omega(0,\mathbf{x}) = \omega_0(\mathbf{x}),& \qquad\text{   $\mathbf{x}\in \mathbf{D}$},
\end{array}\right.
\end{eqnarray}
where $\mathbf{D}$ is either the whole space $\mathbb{R}^2$ or the unit disc $\mathbb{D}$, $\nabla^{\perp}=(\partial_2,-\partial_1)$, $v=(v_1,v_2)$ refers to the velocity field,
$\omega$ is a scalar field understood as vorticity or temperature or buoyancy of the fluid, and
the stream function $\psi$ is prescribed through the following relation
\begin{align}\label{eq:psi}
  \psi(\mathbf{x})=\int_{\mathbf{D}}K(\mathbf{x},\mathbf{y}) \omega(\mathbf{y}) \dd \mathbf{y}.
\end{align}
Hereafter we identify the complex plane $\mathbb{C}$ with $\R^2$. We also assume some symmetry conditions on the kernel $K$, through
\begin{align}\label{Sym01}
  K(\mathbf{x}, \mathbf{y})=K(\mathbf{y},\mathbf{x}),\quad K(\bar{\mathbf{x}},\bar{\mathbf{y}})=K(\mathbf{x},\mathbf{y}),
\end{align}
with $\bar{\mathbf{x}} \triangleq (x_1,-x_2)$ the reflection of $\mathbf{x}=(x_1,x_2)$, and
\begin{align}\label{Sym1}
  K(e^{i\theta} \mathbf{x}, e^{i\theta}\mathbf{y}) = K(\mathbf{x},\mathbf{y}),\quad \theta\in\R.
\end{align}
Then $\omega_0(\mathbf{x})={\bf{1}}_{b\mathbb{D}}(\mathbf{x})$, $b>0$ ($b\mathbb{D}\subset \mathbf{D}$)
is a stationary solution for the equation \eqref{eq:ASE}-\eqref{eq:psi}.

\noindent By taking different forms of the kernel $K$, the equation \eqref{eq:ASE}-\eqref{eq:psi}
includes several important hydrodynamic models as special cases. \vskip1mm
\begin{enumerate}[$\triangleright$]
\item Case $\mathbf{D} =\mathbb{R}^2$,
\begin{align*}
  K(\mathbf{x},\mathbf{y})=-\tfrac{1}{2\pi}\log |\mathbf{x}-\mathbf{y}|,\quad
  \textrm{that is},\quad \psi(\mathbf{x}) = (-\Delta)^{-1} \omega(\mathbf{x}),
\end{align*}
\eqref{eq:ASE}-\eqref{eq:psi} becomes the 2D Euler equation in the vorticity form,
which describes the motion of a 2D inviscid incompressible fluid and is a fundamental model in fluid dynamics. \vskip1mm
\item Case  $\mathbf{D}=\mathbb{R}^2$,
\begin{align*}
  K(\mathbf{x},\mathbf{y})={c_\beta}|\mathbf{x}-\mathbf{y}|^{-\beta},\;\beta\in (0,2),\quad
  \textrm{that is},\quad \psi(\mathbf{x}) = (-\Delta)^{-1+\frac{\beta}{2}} \omega(\mathbf{x}),
\end{align*}
with $c_\beta\triangleq \frac{\Gamma(\frac{\beta}{2})}{\pi2^{2-\beta}\Gamma(1-\frac{\beta}{2})} $,
\eqref{eq:ASE}-\eqref{eq:psi} is the inviscid generalized surface quasi-geostrophic (abbr. gSQG) equation.
In particular, for $\beta=1$, it is the surface quasi-geostrophic (abbr. SQG) equation which is a simplified model
to track the atmospheric circulation near the the tropopause \cite{HPGS} and the ocean dynamics in the upper layers \cite{LK06}.
This model in the range $0<\beta<1$ was introduced in \cite{CFMR} as an interpolation between the 2D Euler equation
and the SQG equation.
\vskip1mm
\item Case $\mathbf{D}=\mathbb{R}^2$,
\begin{align*}
  K(\mathbf{x},\mathbf{y})=\tfrac{1}{2\pi}\mathbf{K}_0(\varepsilon|\mathbf{x}-\mathbf{y}|),\;\varepsilon >0,\quad
  \textrm{that is},\quad \psi(\mathbf{x}) = ( - \Delta + \varepsilon^2)^{-1} \omega(\mathbf{x}),
\end{align*}
with $\mathbf{K}_0$ the modified Bessel function (see Subsection \ref{subsec:Bessel}),
\eqref{eq:ASE}-\eqref{eq:psi} is the quasi-geostrophic shallow water (abbr. QGSW) equation.
This model is derived asymptotically from the rotating shallow water equations in the limit of fast rotation
and small variation of free surface \cite{Val08}. The parameter $\varepsilon$ is known as the inverse `Rossby deformation length',
and small $\varepsilon$ physically corresponds to a nearly-rigid free surface.
\vskip1mm
\item Case $\mathbf{D}=\mathbb{D}$, consider the kernel
\begin{align*}
  K(\mathbf{x}, \mathbf{y}) =  - \tfrac{1}{2\pi} \log \left(\tfrac{|\mathbf{x} -\mathbf{y}|}{\big|1-\overline{\mathbf{x}}\mathbf{y}\big|}\right),
\end{align*}
which  describes  the Green function associated with the spectral Laplace operator $-\Delta$ in the unit disc $\mathbb{D}$
with Dirichlet boundary condition, and \eqref{eq:ASE}-\eqref{eq:psi} becomes the 2D Euler equation (in vorticity form) in the unit disc.
\end{enumerate}
\noindent Besides these examples, we refer to  Section \ref{sec:example}  for more
active scalar equations \eqref{eq:ASE}-\eqref{eq:psi}.

\noindent Owing to their substantial physical relevance and formal simplicity,
 active scalar equations \eqref{eq:ASE}-\eqref{eq:psi} have garnered considerable  attention over the  past decades.
Significant  progress has been achieved across multiple fronts.
The global well-posedness of classical solutions for 2D Euler equation in the whole space $\mathbb{R}^2$
or in any smooth bounded domain $\mathbf{D}$
is  well-known for a long time,  see for instance  \cite{MP94},
while it is still an open problem  for the  gSQG equation with $\beta\in(0,2)$. As to  the local  well-posedness issue in the framework of Sobolev spaces, it was explored in  \cite{CCCG12}
for  the whole space and in \cite{ConN18a} for  smooth bounded domains.
On the other hand, the $L^2$-weak solutions for gSQG equation are known to exist  globally in time, see \cite{Resnick,Mar08,LX19} for  $\R^2$
and \cite{ConN18b,NHQ18} for  smooth bounded domains.
Recently, their non-uniqueness aspect  in the plane  has been investigated in \cite{BSV19,Isett21}.\\
Another  significant  class of solutions extensively  studied  in the literature involves  the \textit{patch solutions},
which are solutions to \eqref{eq:ASE}-\eqref{eq:psi} with initial data in the form  of the characteristic function of a  bounded domain $D$, that is, $\omega_0(\mathbf{x})={\bf{1}}_D(\mathbf{x})$.
According to Yudovich \cite{Yud63}, the vorticity patch solution for 2D Euler equation in whole space is globally well-defined keeping during the motion  the form of  the patch structure. The patch problem initiated in 1980s revolves around the regularity persistence of the boundary. It aims to determine whether the initial regularity, for instance of type $C^{k,\gamma}$ with $k\in\mathbb{N}^\star$ and  $0<\gamma<1$,  can persist for all time. This problem was successfully tackled  by  Chemin \cite{Chemin93}, see also Bertozzi and Constantin \cite{BerC93} for another proof.
Similar results in half plane or within  smooth bounded domain were also obtained in \cite{Dep99,KRYZ,Kiselev19}.
The situation turns out to be more involved  for gSQG equation with $\beta\in (0,2)$. Here,  only local-in-time persistence in Sobolev spaces has been  established as proved in   \cite{CCCG12,Gan08,Rod05}.
In this case, some numerical experiments show strong evidence for the finite-time singularity formation,
see \cite{CFMR,SD14,SD19}. 
Finite-time singularity results with multi-signed patches in half plane in various range of $\beta$ has been accomplished
by Kiselev et al \cite{KRYZ,KYZ17} and also \cite{GanP21}. Very recently, ill-posedness results in various  H\"older and Sobolev spaces, associated with  the boundary  patches or to  the initial data,  have been established in \cite{Cor-Zo21,Cor-Zo22,KL23a,KL23}.
\vskip2mm

\noindent The main goal of this paper  is to construct  time periodic solutions in the patch form for  active scalar equations
\eqref{eq:ASE}-\eqref{eq:psi} with a general kernel  form $K$ that will cover most of the equations arising in geophysical flows.
This type of patch solutions are commonly known as \textit{V-states}, or \textit{relative equilibria} or \textit{rotating patches}. Their shape is not altered during the motion and can  be described through  a rigid body transformation.
By identifying $\mathbb{R}^2$ with the complex \mbox{plane $\mathbb{C}$} and
assuming that the center  of rotation is the origin,
the V-states take the form $\omega(\mathbf{x},t)=1_{D_t}(\mathbf{x})$, with
$D_t=e^{i\Omega t}D$, where $D\subset \mathbb{R}^2$ is a bounded domain.
The real number  $\Omega$ is called the \textit{angular velocity} of the rotating domain and  will play the role of a bifurcation parameter.

\noindent The V-states study  for  active scalar equations
\eqref{eq:ASE}-\eqref{eq:psi} has a long history and it is still an active area with intensive research. Over the last few decades, significant contributions at both analytical and numerical levels have shaped this field. The first example of rotating patches for Euler equations dates back to Kirchhoff \cite{Kirch},
who proved that any ellipse with semi-axis $a$ and $b$  rotates uniformly with  the angular velocity
$\Omega=\frac{ab}{a^2+b^2}$,
see also \cite[p. 232]{Lamb45}. 
About one century later, Deem and Zabusky \cite{DZ78} 
conducted numerical computations showcasing the existence of implicit V-states with $m$-fold symmetry.
This was analytically justified by Burbea \cite{Burbea82} using the  bifurcation theory and conformal parametrization. Actually,
the bifurcation from the  Rankine vortices (radial case)  occurs
at the angular velocities $\Omega=\frac{m-1}{2m}$ ($m\geqslant 2$).
Later, Hmidi, Mateu and Verdera \cite{HMV13} revisited this construction and
 show the $C^\infty$ boundary regularity and convexity of the bifurcated V-states close to Rankine vortices. The analyticity of the boundary has been recently explored  by
Castro, C\'ordoba and G\'omez-Serrano in \cite{CCG16b}, and its global version has been discussed by Hassainia, Masmoudi and Wheeler in \cite{HMW20}.

\noindent The V-states for the gSQG model in the whole plane was first investigated by  Hassainia and Hmidi in \cite{HH15} and confirmed a similar result to
 Burbea  for all  $\beta\in (0,1)$. Later,
Castro, C\'ordoba and G\'omez-Serrano \cite{CCG16} extended the construction  for the range $\beta\in [1,2)$ and proved
the $C^\infty$ boundary regularity; see also \cite{CCG16b}
for the real analyticity of the V-states boundary.\\
Similar rigid time periodic solutions for the   QGSW equation  was studied by Dritschel, Hmidi, and Renault \cite{DHR19}.
The  topic of V-states in radial domains with rigid boundary was initiated  by De la Hoz, Hassainia, Hmidi and  Mateu for 2D Euler equation in \cite{DHHM} and by  the authors of this paper to  gSQG equation
\cite{HXX23}.

\noindent Besides the above results, there are abundant papers in recent literature on the mathematical
study of V-states for the active scalar equation \eqref{eq:ASE}-\eqref{eq:psi} from various aspects.
For instance, a second family of countable branches bifurcate from Kirchhoff's ellipses was proved in \cite{CCG16b,HM16};
the existence of doubly connected V-states close to the annulus was established in \cite{DHMV16,HM16b,DHH16,Gom19,Rou23b};
concentrated multi vortices centered at regular n-gons or distributed
according to suitable periodic spatial patterns are analyzed in \cite{CQZZ,Garcia21,Garcia20,GS23,HW22,HM17}.
Very recently, the exploration of  time quasi-periodic vortex patches
for some  active scalar equations \eqref{eq:ASE}-\eqref{eq:psi} has been conducted
by employing  advanced tools from the KAM theory,  we refer to \cite{BHM23,Ber-Gan,GS-I,HHM23,HR22,HHR23,HR21,Rou23b}.
For other connected topics one can see \cite{Ao,CCG16,DHH16,GHM22,GHJ20,GHR23,GPSY,Gravejat,HM17}
and the references therein.
\vskip1mm
\noindent
In this paper we intend to develop a unified approach on the construction  of V-states
for the active scalar equation \eqref{eq:ASE}-\eqref{eq:psi} near Rankine vortices. 
More precisely, we shall apply the local bifurcation theory to construct time periodic patch solutions around
the Rankine vortices of \mbox{type $\mathbf{1}_{b\mathbb{D}}$,} with $b>0$ and  $b\mathbb{D}\subset \mathbf{D}$,
for the system \eqref{eq:ASE}-\eqref{eq:psi} by  imposing general assumptions on the kernel $K$,
which include all the aforementioned important models as special examples.
It should be emphasized that the explicit expression of $K$ plays a crucial role
to  the analysis in the previous works, especially along  the spectrum study where we need the monotinicity of the spectrum sequence.
\vskip0.5mm
\noindent
Before describing  our primary  contributions, we need to introduce the equations that govern  rotating simply connected patches. As we will see in Section \ref{sec:eq-linear}, we find it more convenient to parametrize  the boundary  of the V-states close to the
stationary solution $\mathbf{1}_{b\D}$ in terms of polar coordinates $\theta\in \R\mapsto \sqrt{b^2+2r(\theta)}e^{i\theta}$ with
$b>0$, such that $b \mathbb{D}\subset \mathbf{D}$.
The contour dynamics equation can be formulated as a nonlinear integro-differential equation $F(\Omega,r)=0$ with
\begin{align}\label{eq:main}
  \nonumber F(\Omega,r)&\triangleq \Omega r'(\theta)+ \partial_\theta \bigg(\int_0^{2\pi}\int_0^{R(\eta)}
  K(R(\theta)e^{i\theta},\rho e^{i\eta})\rho \dd \rho \dd \eta\bigg),\quad R(\theta)\triangleq \sqrt{b^2+2r(\theta)}\\
  &\triangleq \Omega r'(\theta)+F_1(r).
\end{align}
One can easily show that $F(\Omega,0) =0$ for all $\Omega \in\mathbb{R}$ and therefore the next task is to check that
the local bifurcation tools such as Crandall-Rabinowitz's theorem (see Theorem \ref{thm:C-R} below) applies in this framework.
\vskip1mm

\noindent
The first main result concerns the stream function $\psi$ associated with a convolution kernel 
\begin{align}\label{case:K-1}
K(\mathbf{x},\mathbf{y})=K_0(|\mathbf{x}-\mathbf{y}|),\quad  \forall\, \mathbf{x}, \mathbf{y}\in \mathbf{D},
\end{align}
where the function $t\in(0,\infty)\mapsto K_0(t)$  satisfies the following assumptions,
\begin{enumerate}
\item[($\mathbf{A}$1)] {\it Complete monotonicity}: the function $-K'_0$ is a nonzero completely monotone function (see Definition \ref{def:cmf}), equivalently,
there exists a non-negative measure $\mu$ on $[0, \infty)$ such that
\begin{align}\label{eq:K0prim}
  -K'_0(t)=\int_0^{\infty}e^{-tx }\dd \mu(x),\quad \forall t >0.
\end{align}
\item[($\mathbf{A}$2)] {\it Integrability assumption}: there exists a constant $a_0>0$ 
and some $\alpha\in (0,1)$ such that
\begin{equation}\label{cond:K0}
\begin{aligned}
  \int_0^{a_0} |K_0(t)| t^{-\alpha+\alpha^2}\dd t < \infty.
\end{aligned}
\end{equation}
\end{enumerate}
Note that the assumptions $(\mathbf{A}1)$-$(\mathbf{A}2)$ encompass as special examples the classical equations: Euler equations, gSQG and
QGSW equations, see Section \ref{sec:example} for more discussion.\\
 Our first main result reads as follows.

\begin{theorem}\label{thm:main}
Assume \eqref{case:K-1},
with $K_0$ satisfying the conditions $(\mathbf{A}\mathrm{1})$-$(\mathbf{A}\mathrm{2})$.
Then for any $m\in \mathbb{N}^\star$,
there exists a family of $m$-fold symmetric $V$-states  for the active scalar equation \eqref{eq:ASE}-\eqref{eq:psi}
bifurcating from the Rankine vortices
$\mathbf{1}_{b\D}(\mathbf{x})$, provided that $b\mathbb{D}\subset \mathbf{D},$ at the angular velocity
\begin{align}\label{spectral:Omega-whole}
  \Omega_{m,b}^0=\int_{\T} K_0\big(|2b\sin \tfrac{\eta}{2}|\big)\cos \eta\,\dd \eta
  -\int_{\T} K_0\big(|2b\sin \tfrac{\eta}{2}|\big)\cos(m\eta) \dd \eta.
\end{align}
\end{theorem}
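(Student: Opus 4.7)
The natural strategy is to apply Crandall--Rabinowitz's theorem (Theorem \ref{thm:C-R}) to the functional equation $F(\Omega,r)=0$ defined in \eqref{eq:main}. First I would fix the functional setting: take $r$ in an open neighborhood of $0$ in a space $X_m^{1+\alpha}$ of $m$-fold symmetric $C^{1,\alpha}$-functions on $\mathbb{T}$ (small enough that $R(\theta)=\sqrt{b^2+2r(\theta)}>0$ and $R\,e^{i\theta}$ stays inside $\mathbf{D}$), and check that $F:\R\times X_m^{1+\alpha}\to Y_m^\alpha$ is $C^1$, where $Y_m^\alpha$ is the corresponding $m$-fold symmetric $C^\alpha$-space. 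The mild singularity of $K_0$ at the origin is controlled precisely by the integrability assumption $(\mathbf{A}2)$, while the smoothness of $K_0$ on $(0,\infty)$ is free from $(\mathbf{A}1)$ since completely monotone functions are real-analytic on $(0,\infty)$. The identity $F(\Omega,0)\equiv 0$ is immediate, using the rotational symmetry \eqref{Sym1} of $K$ which makes $F_1(0)$ a constant that is then killed by $\partial_\theta$.

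Next I would compute the linearization $L\triangleq D_rF_1(0)$. The symmetry \eqref{Sym1} forces $L$ to commute with rotations, so it acts as a Fourier multiplier; writing everything in terms of $\eta-\theta$ and using the convolution form \eqref{case:K-1}, one obtains, for each integer $n$,
\begin{equation*}
 D_rF(\Omega,0)\bigl[\cos(n\theta)\bigr]= n\,(\Omega_{n,b}^0-\Omega)\,\sin(n\theta),
\end{equation*}
with $\Omega_{n,b}^0$ given by formula \eqref{spectral:Omega-whole}. The two terms come from differentiating in $R(\theta)$ (which yields the $\cos\eta$-integral, matching the contribution of the Rankine self-rotation) and from differentiating in the upper limit $R(\eta)$ (which yields the $\cos(n\eta)$-term). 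At $\Omega=\Omega_{m,b}^0$ the $m$-th Fourier mode lies in the kernel.

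The central difficulty is then to establish that this kernel is one-dimensional, equivalently that $\Omega_{n,b}^0\neq \Omega_{m,b}^0$ for every $n\geqslant 2$ with $n\neq m$. Writing
\begin{equation*}
\Omega_{n,b}^0-\Omega_{m,b}^0=\int_{\T}K_0\bigl(|2b\sin\tfrac{\eta}{2}|\bigr)\bigl(\cos(m\eta)-\cos(n\eta)\bigr)\dd\eta,
\end{equation*}
integrating by parts and plugging in the Bernstein representation \eqref{eq:K0prim} converts the problem into testing a single $K_0$-independent integral kernel (a \emph{universal function} arising from $e^{-2bx|\sin(\eta/2)|}$ against sinusoidal modes) against the non-negative measure $\mu$. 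The plan is to prove that this universal function has strict sign/monotonicity properties with respect to the mode number, which, combined with $\mu\not\equiv 0$ (guaranteed by $-K_0'$ being nonzero in $(\mathbf{A}1)$), forces all $\Omega_{n,b}^0$ to be mutually distinct. This is exactly the \emph{factorization of the spectrum} announced in the abstract; it is the only place where complete monotonicity is genuinely used, and it is the step I expect to be the main obstacle.

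Once simplicity of the eigenvalue is secured, the remaining ingredients are routine. Fredholmness of index $0$ follows from the fact that $L$ is a smoothing operator (again by $(\mathbf{A}1)$--$(\mathbf{A}2)$), so $D_rF(\Omega,0)$ differs from a multiple of $\partial_\theta$ by a compact perturbation. Transversality is an explicit computation: since $\partial_\Omega D_rF(\Omega,0)=\partial_\theta$, its action on the eigenvector $\cos(m\theta)$ is $-m\sin(m\theta)$, which is not in the range of $D_rF(\Omega_{m,b}^0,0)$ because by the previous step the range misses the $m$-th Fourier mode entirely. Applying Crandall--Rabinowitz yields a nontrivial $C^1$ curve of $m$-fold symmetric V-states bifurcating at $\Omega_{m,b}^0$, which is the conclusion of the theorem.
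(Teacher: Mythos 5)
Your outline is the same broad strategy as the paper's (Crandall--Rabinowitz around the Rankine vortex, with the spectral factorization as the key novelty), and the linearization formula, transversality, and identification of the main obstacle are all correct. But one step, as stated, does not go through under the hypotheses of the theorem, and it is worth flagging because the paper does something genuinely different there.

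You establish the range characterization/Fredholmness by asserting that $\mathcal{L}[0]$ is ``a smoothing operator (again by $(\mathbf{A}1)$--$(\mathbf{A}2)$)'', so that $\partial_rF(\Omega,0)$ is a compact perturbation of $\Omega\,\partial_\theta$. In a H\"older-space setting, compactness of a Fourier multiplier operator requires a quantitative gain of regularity, i.e.\ a controlled decay rate of the multiplier, not merely $\Lambda_{n,b}\to 0$. Under $(\mathbf{A}1)$--$(\mathbf{A}2)$ all that is guaranteed a priori is $\lambda_{n,b}=2\int_0^\infty\phi_n(bx)\,\tfrac{\dd\mu(x)}{x}\to 0$, with a rate governed by $\int_0^\infty\tfrac{b}{n^2+(bx)^2}\,\dd\mu(x)$ which can decay arbitrarily slowly if $\mu$ puts substantial mass near infinity; moreover, truncating to high modes to make the $L^\infty$ multiplier norm small is not enough, since boundedness in $C^{k+\alpha}(\T)$ also needs control of the difference $n|a_{n+1}-a_n|$ and a sharp cut-off produces a jump of size $n\,\lambda_{n,b}$. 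The paper avoids this issue entirely: it does not claim $\mathcal{L}[0]$ is compact, but instead directly constructs the preimage $h$ Fourier mode by Fourier mode and verifies $h\in C^{2-\alpha}(\T)$ through the Mikhlin-type criterion of Lemma \ref{lem:multiplier-lemma}, for which the estimate $\sup_k k\,|\Omega_{k+1,b}-\Omega_{k,b}|<\infty$ suffices and is established from $(\mathbf{A}2)$ via integration by parts. Apart from this, you correctly identify the simplicity of the eigenvalue as the crux, but you leave the actual mechanism unstated: the paper converts the Bernstein representation into a second-order ODE \eqref{eq:phi-ODE} for the universal function $\phi_n$ and invokes a comparison principle (Lemma \ref{lem:comparison}) to obtain positivity and strict monotonicity of $n\mapsto\phi_n(x)$, which is by far the heaviest part of the argument. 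Similarly, the $C^1$-regularity of $F$ is nontrivial under the purely integral condition $(\mathbf{A}2)$ and occupies Proposition \ref{prop:regularity-F1} with several new boundedness lemmas on integral operators on the torus; it should not be treated as routine.
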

\vskip2mm

\noindent
Motivated by the papers \cite{DHHM,HXX23} on the V-states in radial domains,
our second main result considers the perturbative case where the kernel involved in the stream function  takes  a  more general form  
\begin{align}\label{case:K-2}
  K(\mathbf{x},\mathbf{y})=K_0(|\mathbf{x}-\mathbf{y}|) + K_1(\mathbf{x},\mathbf{y}),
\end{align}
where $K_0$  satisfies $(\mathbf{A}1)$-$(\mathbf{A}2)$, whereas  $K_1$ satisfies
\begin{enumerate}
\item[($\mathbf{A}$3)] {\it{Regularity assumption}}: $K_1\in C^k_{\mathrm{loc}}({\mathbf{D}}^2)$ for some $k\geqslant 4$.
\item[($\mathbf{A}$4)] {\it{Symmetry  assumption}}: we assume that for any $ \mathbf{x},\mathbf{y}\in\mathbf{D},$
\begin{align*}
  K_1(\mathbf{x},\mathbf{y})= K_1(\mathbf{y},\mathbf{x}),
  \,K_1(\bar{\mathbf{x}},\bar{\mathbf{y}})=K_1(\mathbf{x},\mathbf{y}),\,K_1(e^{i\theta}\mathbf{x},e^{i\theta}\mathbf{y}) = K_1(\mathbf{x},\mathbf{y}), \forall \theta\in \R,
\end{align*}
where $\bar{\mathbf{x}} = (x_1, -x_2)$ is the reflection of $\mathbf{x} =(x_1,x_2)$.
\end{enumerate}
\begin{theorem}\label{thm:perturbative}
Consider the general case \eqref{case:K-2} with the  assumptions $(\mathbf{A}\mathrm{1})$--$(\mathbf{A}\mathrm{4})$.
Then there exists a sufficiently large number $m_0\in \mathbb{N}^\star$, such that for any $m\geqslant m_0$,
the  equation \eqref{eq:ASE}-\eqref{eq:psi} admits a family of $m$-fold symmetric V-states
bifurcating from the trivial solution $\mathbf{1}_{b\mathbb{D}}(\mathbf{x})$, provided that
$b\mathbb{D}\subset \mathbf{D}$,
at some  angular velocity $\Omega_{m,b}.$
\end{theorem}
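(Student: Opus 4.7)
The plan is to treat the decomposition $K = K_0 + K_1$ as a smooth perturbation of the setting of Theorem \ref{thm:main}: the finite smoothness of $K_1$ forces its contribution to the linearized operator to decay polynomially in the Fourier mode $m$, so that for $m$ large it cannot destroy the simple eigenvalue structure produced by $K_0$. Accordingly, I split the nonlinear functional from \eqref{eq:main} as
\[
  F(\Omega, r) = F^{(0)}(\Omega, r) + F^{(1)}(r),
\]
where $F^{(0)}$ is built from $K_0$ and coincides with the functional analyzed in Theorem \ref{thm:main}, while $F^{(1)}$ is built from $K_1$. The first task is to verify that $F$ is a $C^1$ map between appropriate spaces $V, W$ of $m$-fold symmetric H\"older perturbations of the unit circle. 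For $F^{(0)}$ this is provided by the proof of Theorem \ref{thm:main}; for $F^{(1)}$ the regularity assumption $(\mathbf{A}3)$ makes the integrand smooth, so that $F^{(1)}$ is a regularizing operator whose smoothing order grows with $k$.

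Next I would linearize at $r = 0$. Because the symmetry conditions $(\mathbf{A}4)$ on $K_1$ exactly mirror the ones already exploited for $K_0$, the operator $\partial_r F(\Omega, 0)$ remains diagonal in the real Fourier basis on the circle and acts on the $m$-th mode by
\[
  \lambda_m(\Omega) = \Omega - \mu_m^{(0)} - \mu_m^{(1)},
\]
where $\mu_m^{(0)} = \Omega_{m,b}^0$ is the $K_0$-eigenvalue supplied by Theorem \ref{thm:main} and $\mu_m^{(1)}$ is produced by running the same linearization procedure with $K_1$ in place of $K_0$. Since $K_1\in C^k_{\mathrm{loc}}(\mathbf{D}^2)$ with $k\geqslant 4$, repeated integration by parts in the angular variable of the integral defining $\mu_m^{(1)}$ yields polynomial decay $|\mu_m^{(1)}| = O(m^{-j})$ for some $j = j(k)$ that grows with $k$. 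Setting $\Omega_{m,b} := \mu_m^{(0)} + \mu_m^{(1)}$ then provides a natural candidate for the bifurcation value.

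Finally I would verify the Crandall--Rabinowitz hypotheses (Theorem \ref{thm:C-R}). Transversality is immediate from $\partial_\Omega \lambda_m \equiv 1$. The one-dimensional kernel and codimension-one range conditions reduce to showing that $\lambda_{m'}(\Omega_{m,b}) \neq 0$ for every $m' \neq m$, that is, to a quantitative separation estimate
\[
  |\mu_m^{(0)} - \mu_{m'}^{(0)}| \geqslant c(m), \qquad m' \neq m,
\]
with $c(m)$ beating the perturbation gap $|\mu_m^{(1)} - \mu_{m'}^{(1)}| = O(m^{-j})$. The universal-function factorization of the $K_0$-spectrum announced in the abstract, and already exploited in Theorem \ref{thm:main}, should furnish precisely such a lower bound. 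The main obstacle is to make this spectral separation sharp and uniform enough in $b$ to absorb the $K_1$-perturbation uniformly in $m' \neq m$; once this quantitative gap is secured, fixing $m_0$ large enough and applying Crandall--Rabinowitz for each $m \geqslant m_0$ yields the announced family of $m$-fold V-states.
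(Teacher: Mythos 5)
Your plan follows essentially the paper's own route: split the spectrum into a $K_0$ part (controlled by the universal-function factorization and Lemma \ref{lem:Omega}) and a $K_1$ part, prove polynomial decay for the $K_1$ contribution by integration by parts in the angular variable, and then choose $m_0$ so large that the $K_0$ gap dominates the $K_1$ perturbation, whereupon Crandall--Rabinowitz applies.

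Two points to tighten. First, the claim $|\mu_m^{(1)}| = O(m^{-j})$ is not correct as stated: from \eqref{def:bar-Omeg}, the $K_1$ contribution $\Omega_{m,b}^1$ contains, besides the term $-\int_{\T} K_1(b,be^{i\eta})\cos(m\eta)\,\dd\eta$ that indeed decays like $m^{-3}$ by integration by parts, a second term $-b^{-1}\int_0^{2\pi}\int_0^b\partial_{\rho_1}G_1(b,0,\rho,\eta)\rho\,\dd\rho\,\dd\eta$ coming from $V_1[0]$ which is independent of the Fourier mode and does not decay at all. This constant cancels in the difference $\mu_{nm}^{(1)}-\mu_{n'm}^{(1)}$, which is what actually enters the comparison; but the decay claim for $\mu_m^{(1)}$ itself should be replaced by a decay claim for that difference, restricted to modes $nm$ and $n'm$ in $m\N^\star$ (the $m$-fold subspace is what lets the relevant separations all be of order $\gtrsim (nm)^{-3}$, as in \eqref{eq:td-Omeg-lbd} and Lemma \ref{lem:Omega-pertubative}). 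Second, reducing the codimension-one range condition to the spectral gap alone skips the regularity of the preimage: one must show the formal Fourier inverse lands back in $C^{2-\alpha}$, which the paper does via the Mikhlin-type multiplier Lemma \ref{lem:multiplier-lemma} together with the bound $\sup_k k|\Omega_{k+1,b}-\Omega_{k,b}|<\infty$ in Proposition \ref{propos:bifurcation}(2). Finally, uniformity in $b$ is a non-issue here, since $b$ is fixed throughout.
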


\begin{remark} 
The angular velocity $\Omega_{m,b}$ in Theorem \ref{thm:perturbative} can be explicitly linked to the kernel as follows
\begin{equation*}
  \Omega_{m,b} = -b^{-1}\int_{0}^{2\pi}\int_0^b\Big(\nabla_\mathbf{x}K(be^{i\theta},\rho e^{i\eta})
  \cdot e^{i\theta}\Big)\rho \dd \rho \dd \eta - \int_0^{2\pi} K(b,be^{i\eta})e^{i m \eta}\dd \eta.
\end{equation*}
In particular, with the notation $G_1(\rho_1,\theta,\rho_2,\eta) \triangleq K_1 (\rho_1 e^{i\theta}, \rho_2 e^{i\eta})$, we also have
\begin{equation}\label{spectral:Omega-general}
\begin{aligned}
  \Omega_{m,b}=& \, \Omega_{m,b}^0
  - b^{-1} \int_0^{2\pi} \int_0^b \partial_{\rho_1} G_1(b, 0,\rho, \eta) \rho \dd \rho \dd \eta
  - \int_{\T} K_1(b,b e^{i\eta})\cos (m\eta)\dd \eta.
\end{aligned}
\end{equation}
\end{remark}

In the proof of Theorem \ref{thm:main}, our primary challenge lies in  exploring  the spectrum distribution of the linearized operator to the functional $F_1$ defined in \eqref{eq:main} at the equilibrium state. One of the  crucial ingredient is   the strict monotonicity of the spectrum
$(\Omega_{m,b}^0)_{m\in\mathbb{N}^\star}$ with respect \mbox{to $m$,} needed to get a one-dimensional kernel, which is a requisite condition stipulated   in Crandall-Rabinowitz's theorem, see Theorem \ref{thm:C-R}.\\[0.5mm]
Note that $\Omega_{m,b}^0$ has the expression \eqref{spectral:Omega-whole}
according to the analysis implemented in Subsection \ref{subsec:lin}.
Given  this  representation involving oscillating trigonometric functions, it  is not at all obvious whether this sequence exhibits a monotonic behavior with  general kernel function $K_0$.
A crucial discovery is that when  $K_0$ satisfies the assumption $(\mathbf{A}1)$, then we find an interesting    factorization of  the  spectrum  as follows, see Lemma \ref{lem:lamb-n},
\begin{align*}
  \Omega_{m,b}^0 = 2\int_0^{\infty}\big(\phi_1(bx)-\phi_m(bx)\big)\tfrac{\dd \mu(x)}{x}\quad\hbox{with}\quad
  \phi_m(x) \triangleq \int_0^{\pi}e^{-2\sin (\eta)x}e^{i2m\eta}\dd \eta,
\end{align*}
where $\mu$ is a nonnegative Borel measure. In this factorization, we make appeal to the  universal function  $\phi_m$ which is completely independent of the model and will    encode the key  feature of the spectral distribution. Especially, we show in  Proposition \ref{thm:mono} that  for each  $x>0$,
$\phi_m(x)$ is positive and the sequence $m\in \N^\star\mapsto \phi_m(x)$ is strictly decreasing, which yields in turn to the monotonicity of the spectrum. These properties on $\phi_m$ are not obvious and do not directly result from the definition of $\phi_m$ because the integrand undergoes  oscillations with   changes in sign. The crucial  point here  is that   $\phi_m$   solves a second order linear  differential equation with variable coefficients  given by  \eqref{eq:phi-ODE}. Then applying  an {\it ad hoc} comparison theorem result outlined in  \mbox{Lemma \ref{lem:comparison}} allows  to show  that
$\phi_m$ is positive and strictly decreasing in $m$.
Another serious difficulty lies on the proof of the strong regularity properties of $F(\Omega,r)$ needed in
Crandall-Rabinowitz's theorem.
Since we only impose an  integrability condition on $K_0$ through the  assumption $(\mathbf{A}2)$, 
the boundedness results in \cite{DHR19,HH15,HMV13,HXX23} related to singular kernel integrals with pointwise assumptions on the kernels
can not be directly used.
To circumvent this difficulty we establish   suitable results, see Lemma \ref{lem:int-operator} and Lemma \ref{lem:int2},  dealing with integral operators \eqref{eq:int-operator}
on the torus and use some  persistence regularity estimates employed  several  times to infer the required regularity for $F(\Omega,r)$ as  detailed in
 Subsection \ref{subsec:reg}.
The third delicate point in the proof is to check that $\partial_rF( \Omega_{m,b}^0,0)$ is of co-dimension one.
To this end, we shall use a Mikhlin type multiplier theorem stated  in  Lemma \ref{lem:multiplier-lemma} on the periodic framework, as described  in \mbox{Proposition \ref{propos:bifurcation}.} Another interesting result  is summarized in Proposition \ref{prop:asypt}  and \mbox{Corollary \ref{cor:asym}} where we derive the following spectrum  expansion: for each $N\in\N$ and $n\geqslant1$,
\begin{align*}
  \Omega^0_{n,b} = 2\int_0^{\infty}\phi_1(bx)\tfrac{\dd \mu(x)}{x}-2\sum_{k=0}^{N}\frac{1}{n^{2k+1}}\int_0^\infty \Psi_k(\tfrac{b x}{n}) \tfrac{\dd \mu(x)}{x}
  +\varepsilon_{n,N},
\end{align*}
where
\begin{align*}
  \Psi_0(x)=\frac{x}{1+x^2}, \quad
  \Psi_{k+1}(x)  = \frac{x^2}{4(1+x^2)}\Big(\Psi_k^{\prime\prime}(x)+\frac{1}{x}\Psi'_k(x)\Big),\quad \forall k\geqslant 0,
\end{align*}
and
\begin{align*}
  |\varepsilon_{n,N}|\leqslant \frac{C_{N,\delta}}{n^{2N+\frac{5}{3}}}\int_0^\infty \frac{x^{\delta-1}}{1+\frac{b x}{n}}  \dd \mu(x),\,
  \quad \forall \delta\in[0,\tfrac{1}{3}).
\end{align*}
This holds significant   consequences in classical analysis, illustrated in Section \ref{sec:example} through several examples stemming  from geophysical flows, see  Section \ref{sec:example}. The proof of the foregoing expansion  results on a rescaling argument coupled with an application of the Hankel transform.
\vskip1mm
\noindent
As to  the proof of Theorem \ref{thm:perturbative}, 
the main challenge is still to show the monotonicity of the spectrum sequence $(\Omega_{m,b})_{m\in \mathbb{N}^\star}$, which takes the form
\eqref{spectral:Omega-general} as shown in Subsection \ref{subsec:lin}. The idea is to perform perturbative arguments
where from the regularity assumption on  $K_1$ defined in \eqref{case:K-2}  we derive that
 the last term $\int_{\mathbb{T}} K_1(b,be^{i\eta}) \cos(m\eta) \dd \eta$ involved
in \eqref{spectral:Omega-general} decays in $m$ as $O(m^{-k})$ with some $k\in \mathbb{N}^\star$ that can be chosen large enough.
Thus, to derive the monotonicity property of the sequence $(\Omega_{m,b})_{m\geqslant1}$,
it is enough  to  analyze  the spectrum repartition and show an algebraic lower bound decay for $ \Omega_{m+1,b}^0 -  \Omega_{m,b}^0$.
To this end, we need a more careful quantitative study of the sequence  $(\phi_m(x))_{m\geqslant1}$. In Proposition \ref{eq:bound-diff-phi-n} we show that for every $m\geqslant 1$ and $x>0$,
\begin{align*}
  \frac{1}{2}\frac{(2m+1)x}{(m^2+x^2) \big((m+1)^2+x^2\big)}\leqslant \phi_m(x) - \phi_{m+1}(x)\leqslant
  4 \frac{(2m+1)x}{(m^2+x^2) \big((m+1)^2+x^2\big)}.
\end{align*}
From this, we find according to \eqref{eq:td-Omeg-lbd} a constant $c_*>0$ such that
\begin{align*}
  \Omega_{m+1,b}^0 - \Omega_{m,b}^0 &
  \geqslant \frac{c_* }{m^3}.
\end{align*}
This is the key point to get the spectrum monotonicity for large modes.
Notice that as a by-product of the spectral analysis, we also present a discussion in Section \ref{subsec:lam-conv} concerning  the convexity of  the spectrum $(\Omega^0_{m,b})_{m\geqslant1}$.
\\[0.5mm]
In the section \ref{sec:example}, we will delve into some  applications of Theorem \ref{thm:main} and Theorem \ref{thm:perturbative}. The 2D Euler equations, gSQG and QGSW equations in the whole space align seamlessly with \mbox{Theorem \ref{thm:main}.}
We point out  that the spectral study of the gSQG and QGSW equations as detailed  in \cite{DHR19,HH15} involves intricate analysis on  special functions. Nevertheless, with our  approach  those results are  easily derived yielding  new identities and estimates such as  \eqref{eq:lambda-gSQG}, \eqref{eq:lambda-gSQG-diff}, \eqref{eq:lamb-QGSW-bdd},
\eqref{eq:lamb-QGSW-diff}, \eqref{eq:lamb-QGSW2}.
The V-states for 2D Euler, gSQG and QGSW equations within  the unit disc $\mathbb{D}$ with rigid boundary condition fall under the scope of
Theorem \ref{thm:perturbative} allowing to get  the results outlined  in \cite{DHHM,HXX23}.
Notably, the application to  QGSW equation in $\mathbb{D}$ with rigid boundary condition  is a new contribution.

\vskip1mm
\noindent The remainder of this paper is organized as follows.
In the next section, we introduce the boundary equation modeling the V-states,
consider the linearization around the equilibrium state,
and give an important factorization formula of the spectrum  in terms of the universal function $\phi_n$.
In Section \ref{sec:phi_n}, we focus  on the analysis of some crucial properties of $\phi_n$.
We first prove a useful comparison theorem in Subsection \ref{subsec:comp-thm} allowing
to  derive the positivity and monotonicity of $\phi_n$ and  its asymptotic behavior, see Subsections
\ref{subsec:pos-mono} - \ref{subsec:asym1} respectively.
 This approach  offers  suitable tools in Subsections \ref{subsec:decay-phi} - \ref{subsec:lam-conv} to  track  the
decay rate of $\phi_n - \phi_{n+1}$ and the  convexity of the spectrum. In Section \ref{sec:main}, we give the detailed proofs for
\mbox{Theorem \ref{thm:main}} and Theorem \ref{thm:perturbative} by checking  the required conditions of Crandall-Rabinowitz's theorem.
In Section \ref{sec:example}, we present various examples that follow from  Theorems \ref{thm:main} and \ref{thm:perturbative},
and naturally deduce some interesting properties of the associated spectrum (most are new). 
In Section \ref{sec:tools}, we compile the tools used in the paper: completely monotone functions,
Bessel functions and Hankel transform, boundedness property of some integral operators on the torus,
and Crandall-Rabinowitz's theorem.
\vskip1mm

\noindent \textbf{Notation.} Throughout this paper, the following notation and convention will be used.
\begin{enumerate}[$\bullet$]
  \item The symbol $C$ denotes a positive constant that may change its value from line to line.
  \item We denote the unit disc by $\D$. The unit circle  is denoted by $\T$.
  \item The set $\N=\{0,1,2,\cdots\}$ is composed of nonnegative integers, and $\N^\star =\{1,2,\cdots\}$ only includes positive integers.
  \item Let $\mathbf{X}$ and $\mathbf{Y}$ be two Banach spaces.
  We denote by $\mathcal{L}(\mathbf{X}, \mathbf{Y})$ the space of all continuous linear maps $T : \mathbf{X}\rightarrow \mathbf{Y}$
  endowed with its usual strong topology.
\end{enumerate}

\section{Time periodic patches and linearization}\label{sec:eq-linear}
We have multiple goals in this section. First, we will describe in the context of the vortex patches the contour dynamics  in polar coordinates. Then, we will describe the linearized operator  around Rankine vortices, which are radial equilibrium states. This operator  takes the form of a Fourier multiplier, and its  spectrum within the framework  of completely monotone kernels will be factorized based on a Bessel-type universal function.
\subsection{Boundary equation}
Our primary focus lies in the motion of vortex patches concerning the active scalar equation \eqref{eq:ASE}-\eqref{eq:psi}.
Specifically, the solution takes the form  $\omega(t,\mathbf{x})=\mathbf{1}_{D_t}(\mathbf{x})$, where the domain $ D_t\subset \subset \mathbf{D}$
is a smooth perturbation of  the disc $b\,\mathbb{D}$, with $b>0$.
Note that when the domain $\mathbf{D} = \mathbb{D}$ is the unit disc,  then we impose  $0<b<1$ as in \cite{DHHM,HXX23}.
\\[1mm]
Our analysis will be centered on a specific patch solution within rotating domains, defined by
$$D_t = e^{i t \Omega} D$$
with some angular velocity $\Omega\in \mathbb{R}$. Clearly, this generates a time periodic solution with a \mbox{period $T=\frac{2\pi}{\Omega}.$} In this section, the kernel $K$ involved in the stream function \eqref{eq:psi} satisfies the properties \eqref{Sym01} and  \eqref{Sym1}.
Now, we will  parameterize the boundary $\partial D_t$  using the polar coordinates, as follows
\begin{align}\label{def:z}
  \mathbf{z}(t,\cdot):\mathbb{T}&\mapsto \partial D_t, \nonumber \\
  \theta &\mapsto e^{it \Omega} \mathbf{z}(\theta)\triangleq e^{it\Omega}\sqrt{b^2+2r(\theta)}e^{i\theta},
\end{align}
where $\mathbf{z}(\theta)\in \partial D$.
Denote by $\mathbf{n}(t,\mathbf{z}(t,\theta))\triangleq {i}\partial_\theta \mathbf{z}(t,\theta)$
an inward normal vector to the boundary $\partial D_{t}$ at the point $\mathbf{z}(t,\theta)$.
According to \cite[p. 174]{HMV13}, the vortex patch equation writes
\begin{align*}
  \partial_{t}\mathbf{z}(t,\theta)\cdot \mathbf{n}&=u(t,\mathbf{z}(t,\theta))\cdot\mathbf{n}\\
  &=-\partial_\theta[\psi(t,\mathbf{z}(t,\theta))],
\end{align*}
where $\psi$ is the stream function defined by \eqref{eq:psi}.
Then making a change of variables and using the symmetry property \eqref{Sym1}, we deduce that
\begin{align*}
  \psi(t,\mathbf{z}(t,\theta))& = \int_D K(e^{i t\Omega}\mathbf{z}(\theta),e^{i t\Omega}\mathbf{y})  \dd \mathbf{y} \\
  &=\int_D K(\mathbf{z}(\theta),\mathbf{y})  \dd \mathbf{y}.
\end{align*}
In addition,
\begin{align*}
  \partial_t \mathbf{z}(t,\theta)=i\Omega \mathbf{z}(t,\theta)
  = i\,\Omega\,e^{i t\Omega} \sqrt{b^2+2r(\theta)} e^{ i \theta}
\end{align*}
and
\begin{align*}
  \partial_{t}\mathbf{z}(t,\theta)\cdot \mathbf{n}(t,\mathbf{z}(t,\theta))
  =& \;\mbox{Im}\left(\partial_{t}\mathbf{z}(t,\theta)\,\overline{\partial_\theta \mathbf{z}(t,\theta)}\right) \\
  \stackrel{\eqref{def:z}}=&\, \Omega\, r^\prime(\theta).
\end{align*}
Thus we obtain the equation characterizing the boundary $\partial D$,
\begin{align}
  \Omega\, r'(\theta) = - \partial_\theta \left(\int_D K(\mathbf{z}(\theta),\mathbf{y}) \dd \mathbf{y}\right).
\end{align}
Using the polar coordinates gives
\begin{equation}\label{def:F_0[r]}
\begin{split}
  \int_D K(z(\theta),\mathbf{y})\dd \mathbf{y}
  &= \int_0^{2\pi} \int_0^{R(\eta)} K(R(\theta)e^{i\theta},\rho e^{i\eta}) \rho \dd \rho \dd \eta\\
  &\triangleq F_0[r](\theta) ,
  \qquad\quad\textrm{with}\quad R(\theta)\triangleq \sqrt{b^2+ 2 r(\theta)},
\end{split}
\end{equation}
thus we arrive at
\begin{equation}\label{eq:F}
\begin{split}
  F(\Omega,r)
  \triangleq \Omega \,r'(\theta)+ \partial_\theta F_0[r](\theta)  = 0 .
\end{split}
\end{equation}
Notice that Rankine vortices $\mathbf{1}_{b\mathbb{D}}(\mathbf{x})$ are stationary solutions of the equation \eqref{eq:F}, that is,
\begin{equation*}
  F(\Omega,0)\equiv 0,\quad \forall\, \Omega \in \mathbb{R}.
\end{equation*}
This property follows  easily  from the fact that $F[0]$ is rotationally invariant according to \eqref{Sym1}.

\subsection{Linearization}\label{subsec:lin}
In this section the kernel $K$ in  \eqref{eq:psi} satisfies \eqref{case:K-2} together with the properties ($\mathbf{A}$3) and  ($\mathbf{A}$4).
Linearizing the rotating patch equation \eqref{eq:F}, we obtain
\begin{equation}\label{eq:F_Lin_r}
  \begin{aligned}
		\partial_r F(\Omega,r)h(\theta)
		& =\Omega h'(\theta)
		+ \partial_\theta\left[\frac{h(\theta)}{R(\theta)}\int_0^{2\pi} \int_0^{R(\eta)}
		\left(\nabla_{\mathbf{x}} K\big(R(\theta)e^{i\theta},  \rho e^{i\eta}\big)
		\cdot e^{i\theta} \right)\rho\, \dd\rho \dd\eta\right]\\
		&\quad + \partial_\theta \left(\int_{\mathbb{T}}  K \big(R(\theta)e^{i\theta},
		R(\eta) e^{i\eta}\big) h(\eta) \dd\eta \right)\\
		&\triangleq \partial_{\theta}\Big( \big(\Omega+V[r](\theta)\big) h(\theta)+\mathcal{L}[r](h)(\theta)\Big).
	\end{aligned}
\end{equation}
From \eqref{case:K-1} we infer
\begin{align*}
  \nabla_{\mathbf{x}}K_0(|\mathbf{x}-\mathbf{y}|)=-\nabla_{\mathbf{y}}K_0(|\mathbf{x}-\mathbf{y}|),
\end{align*}
which implies that
\begin{align*}
  V[r](\theta) & = \frac{1}{R(\theta)} \int_0^{2\pi} \int_0^{R(\eta)}
  \left(\nabla_{\mathbf{x}} K_0 \big(|R(\theta)e^{i\theta}-\rho e^{i\eta}|\big) \cdot e^{i\theta} \right) \rho\,
  \dd \rho \dd \eta +  V_1[r](\theta) \nonumber \\
  & = - \frac{1}{R(\theta)} \int_0^{2\pi} \int_0^{R(\eta)}
  \left(\nabla_{\mathbf{y}} K_0 \big(|R(\theta)e^{i\theta}-\rho e^{i\eta}|\big) \cdot e^{i\theta} \right) \rho\,
  \dd \rho \dd \eta + V_1[r](\theta) \nonumber \\
  & = - \frac{1}{R(\theta)} \iint_D
  \left(\nabla_{\mathbf{y}} K_0 \big(|R(\theta)e^{i\theta}-\mathbf{y}|\big) \cdot e^{i\theta} \right) \dd \mathbf{y}
  + V_1[r](\theta),
\end{align*}
with
\begin{align*}
  V_1[r](\theta) \triangleq \frac{1}{R(\theta)} \int_0^{2\pi} \int_0^{R(\eta)}
  \left(\nabla_{\mathbf{x}}K_1(R(\theta)e^{i\theta},\rho e^{i\eta})\cdot e^{i\theta}\right)\rho \dd \rho \dd \eta.
\end{align*}
By using the Gauss-Green theorem, we rewrite $V[r](\theta)$ as
\begin{equation}\label{def:Vr}
\begin{split}
  V[r](\theta)
  & = - \frac{1}{R(\theta)} \int_{\mathbb{T}} K_0\big(\big|R(\theta)e^{i\theta} - R(\eta)e^{i\eta}\big|\big)
  \big(-i\partial_\eta(R(\eta)e^{i\eta}) \big)\cdot e^{i\theta} \dd\eta + V_1[r](\theta) \\
  & \triangleq V_0[r](\theta) + V_1[r](\theta).
\end{split}
\end{equation}
Hence, by setting $G_1(\rho_1,\theta,\rho_2,\eta) \triangleq K_1(\rho_1 e^{i\theta},\rho_2 e^{i\eta})$ and using \eqref{Sym1}, \eqref{def:Vr}, \eqref{eq:G1-prop1}, \eqref{eq:K-der},
at the equilibrium state $r=0$ one has $V[0]$ is a constant independent of $\theta$ and
\begin{align}\label{exp:V0}
  V[0](\theta)=&\, b^{-1}\int_0^{2\pi}\int_0^b\left(\nabla_{\mathbf{x}} K\big(be^{i\theta},  \rho e^{i\eta}\big)
  \cdot e^{i\theta} \right)\rho\, \dd\rho \dd\eta\nonumber \\
  =&- \int_{\mathbb{T}}K_0\big( |be^{i\theta} - b e^{i\eta}|\big)
  \left( e^{i\eta} \cdot e^{i\theta} \right) \dd\eta
  + b^{-1}\int_0^{2\pi}\int_0^b\left(\nabla_{\mathbf{x}} K_1\big(be^{i\theta},  \rho e^{i\eta}\big)
  \cdot e^{i\theta} \right)\rho\, \dd\rho \dd\eta  \nonumber \\
  = & -\int_{\TT} K_0 \big( |b -be^{i\eta}|\big)\cos (\eta)\dd \eta
  + b^{-1}\int_0^{2\pi}\int_0^b\partial_{\rho_1}G_1(b,\theta,\rho,\eta)\rho\, \dd \rho \dd \eta \nonumber \\
  = & -\int_{\TT} K_0 \big( |b -be^{i\eta}|\big) e^{i\eta}\dd \eta
  + b^{-1}\int_0^{2\pi}\int_0^b\partial_{\rho_1}G_1(b,0,\rho,\eta)\rho\, \dd \rho \dd \eta.
\end{align}
In addition, we get by virtue of assumption ($\mathbf{A}$4),
\begin{align}\label{exp:L0h}
  \mathcal{L}[0](h)(\theta)
  = \int_{\mathbb{T}} K(be^{i\theta},b e^{i\eta}) h(\eta) \dd \eta
  = \int_{\mathbb{T}}  K \big(b, be^{i\eta}\big) h(\theta+\eta) \dd\eta.
\end{align}
It is easy to check that the operator $\mathcal{L}[0]$ is a Fourier multiplier. Actually,  for every smooth function $h(\theta)=\sum_{n\in\mathbb{Z}} h_n e^{i n\theta}$,
\begin{align}\label{def:lambda-nb}
  \mathcal{L}[0] (h)(\theta)=\sum_{n\in\mathbb{Z}}  \Lambda_{n,b}\,  h_n e^{i n\theta},\qquad
  \Lambda_{n,b}\triangleq\int_{\mathbb{T}} K( b,be^{i\eta}) e^{i  n\eta}\dd\eta.
\end{align}
Notice that  $\Lambda_{n,b}=\Lambda_{-n,b}$ (owing to \eqref{Sym01}) and the spectrum of  $\mathcal{L}[0]$ is discrete and given by
\begin{align*}
  \textnormal{sp}(\mathcal{L}[0])=\big\{\Lambda_{n,b}, n\in \mathbb{N} \big\}.
\end{align*}
Denoting that
\begin{align*}
  d_r\mathcal{L}[r](h,w) & \triangleq \Big(\frac{\dd}{\dd s}\mathcal{L}[r+sw](h)\Big)\Big|_{s=0} \\
  & = \int_{\T} \bigg(\nabla_{\mathbf{x}}K\big(R(\theta)e^{i\theta},R(\eta)e^{i\eta}\big)\cdot
  \Big(\tfrac{w(\theta)e^{i\theta}}{R(\theta)}\Big)
  + \nabla_{\mathbf{y}}K\big(R(\theta)e^{i\theta},R(\eta)e^{i\eta}\big)\cdot
  \Big(\tfrac{w(\eta)e^{i\eta}}{R(\eta)}\Big)\bigg)\dd \eta,
\end{align*}
and using the chain rule, we find
\begin{equation}\label{exp:parLr=0}
\begin{split}
  \partial_\theta \Big(\mathcal{L}[r](h)(\theta)\Big)\Big|_{r=0}
  & = \Big(d_r\mathcal{L}[r](h,r')(\theta)\Big) \Big|_{r=0} + \partial_\theta\Big(\mathcal{L}[0](h)(\theta)\Big) \\
  & = d_r\mathcal{L}[0](h,0)(\theta) + \mathcal{L}[0](h')(\theta)
  = \mathcal{L}[0](h')(\theta).
\end{split}
\end{equation}
Similarly, we obtain
\begin{align}\label{exp:par-Vr=0}
  \partial_\theta \Big(V[r](\theta) h(\theta)\Big)\Big|_{r=0} = \Big(d_r V[r](\theta) r'(\theta)h(\theta)\Big)\Big|_{r=0} +
  \partial_\theta\Big( V[0](\theta)h(\theta)\Big)  = V[0]h'(\theta).
\end{align}
Consequently, provided that $(\Lambda_{n,b})_{n\in \mathbb{N}^\star}$ is strictly monotone with respect to $n$,
the kernel of $\partial_r F(\Omega,0)$ is nontrivial if and only if (see Subsection \ref{subsec:spect-study} for more discussion)
\begin{align}\label{eq:spectral-set}
  \Omega\in\Big\{-V[0]-\Lambda_{n,b},\, n\in\mathbb{N}^\star\Big\}.
\end{align}
\vskip0.5mm
\noindent In the particular case where   $K(\mathbf{x},\mathbf{y})=K_0(|\mathbf{x}-\mathbf{y}|)$, one gets
\begin{align}\label{lambda_n}
  \Lambda_{n,b}=\lambda_{n,b} \triangleq \int_{\T}K_0(|b-b e^{i\eta}|) e^{in\eta} \dd \eta
  =  2\int_{0}^{\pi}K_0\big( 2b\sin \eta\big) e^{i 2 n\eta}\dd\eta.
\end{align}

\subsection{Spectrum factorization}
The main goal is to factorize the spectrum $\lambda_{n,b}$ given by \eqref{lambda_n}
using a universal function when the kernel $-K_0^\prime$ is completely monotone as in the assumption $(\mathbf{A}1)$.
More precisely, we have the following key result.
\begin{lemma}\label{lem:lamb-n}
Assume that $K(\mathbf{x},\mathbf{y}) = K_0(|\mathbf{x}-\mathbf{y}|)$ with the assumption $(\mathbf{A}1)$ being satisfied.
Then for every $n\in\mathbb{N}^\star$, $\lambda_{n,b}$ given by \eqref{lambda_n} satisfies
\begin{align}\label{eq:lamb-n}
  \lambda_{n,b}=2\int_0^{\infty}\phi_{n}(bx)\tfrac{\dd {\mu}(x)}{x},
\end{align}
with
\begin{align}\label{def:phi-n}
  \phi_n(x) \triangleq \int_0^\pi e^{-2x \sin(\eta)}e^{i {2 n\eta}}\dd\eta.
\end{align}
\end{lemma}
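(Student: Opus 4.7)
The plan is to use the Bernstein-type representation provided by $(\mathbf{A}1)$ to turn $K_0$ itself (rather than just $-K_0'$) into a continuous superposition of exponentials, and then to interchange the order of integration, exploiting twice the orthogonality relation $\int_0^\pi e^{i2n\eta}\dd\eta = 0$ which holds because $n\geqslant 1$ and kills every $\eta$-constant contribution.

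Concretely, I would first integrate the identity \eqref{eq:K0prim} from an arbitrary reference point $t_0>0$ to $t$, swapping the $s$- and $x$-integrals by Fubini, to obtain the pointwise representation
\begin{equation*}
K_0(t) = K_0(t_0) + \int_0^\infty \frac{e^{-tx}-e^{-t_0 x}}{x}\dd \mu(x), \qquad t>0 .
\end{equation*}
Plugging this into \eqref{lambda_n}, the constant $K_0(t_0)$ drops out by the orthogonality relation, leaving
\begin{equation*}
\lambda_{n,b} = 2\int_0^\pi \int_0^\infty \frac{e^{-2bx\sin\eta}-e^{-t_0 x}}{x}\dd \mu(x)\, e^{i2n\eta}\dd\eta.
\end{equation*}
Note that the integrand is regular at $x=0$ since $(e^{-2bx\sin\eta}-e^{-t_0 x})/x = t_0-2b\sin\eta + O(x)$ as $x \to 0^+$. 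Applying Fubini to swap the two integrations and using $\int_0^\pi e^{i2n\eta}\dd\eta = 0$ once more to discard the $e^{-t_0 x}$ piece, the remaining inner integral is exactly $\phi_n(bx)$ by \eqref{def:phi-n}, which yields the claimed identity $\lambda_{n,b} = 2\int_0^\infty \phi_n(bx)\dd\mu(x)/x$.

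The main technical point is the Fubini justification. Integrating in $\eta$ first, the absolute integrand on $(0,\pi)\times(0,\infty)$ is controlled by $\int_0^\pi |K_0(2b\sin\eta) - K_0(t_0)|\dd\eta$, so the obstacle reduces to local integrability of $K_0$ at the origin; this is guaranteed by $(\mathbf{A}2)$, since the weight $t^{-\alpha+\alpha^2}$ is $\geqslant 1$ on $(0,1)$ for $\alpha\in(0,1)$ and hence \eqref{cond:K0} forces $K_0\in L^1_{\mathrm{loc}}((0,a_0))$. Convergence of the final $x$-integral is then automatic: near $x=0$, the vanishing $\phi_n(0)=\int_0^\pi e^{i2n\eta}\dd\eta = 0$ gives $\phi_n(bx)=O(x)$ which absorbs the $1/x$ singularity, while near $x=\infty$ the exponential damping inside $\phi_n(bx)$ combined with finiteness of $-K_0'(t_0) = \int_0^\infty e^{-t_0 x}\dd\mu(x)$ provides the required decay.
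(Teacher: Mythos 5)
Your proof is correct and follows essentially the same path as the paper's: integrate the Bernstein representation of $-K_0'$ from a reference point to obtain $K_0(t)=K_0(t_0)+\int_0^\infty \tfrac{e^{-tx}-e^{-t_0x}}{x}\dd\mu(x)$, exploit the orthogonality $\int_0^\pi e^{i2n\eta}\dd\eta=0$ for $n\geqslant1$, and swap the $\eta$- and $x$-integrals by Fubini so that the inner integral collapses to $\phi_n(bx)$. The only differences are cosmetic (the paper fixes $t_0=2b$ while you leave it arbitrary) and supplementary: your explicit Tonelli bound $\int_0^\pi|K_0(2b\sin\eta)-K_0(t_0)|\dd\eta<\infty$ via $(\mathbf{A}2)$ is a welcome addition, since the paper does not justify the Fubini step and $(\mathbf{A}1)$ alone does not in fact guarantee local integrability of $K_0$ (e.g.\ $-K_0'(t)=t^{-2}$).
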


\begin{proof}[Proof of Lemma \ref{lem:lamb-n}]
Under the assumption $(\mathbf{A}1)$, and according to Theorem \ref{lem:bernstein},
we infer the existence of a Borel measure ${\mu}$ on $[0, \infty)$ such that
\begin{align}\label{eq:K0prim2}
  -K'_0(t)=\int_0^{\infty}e^{-tx }\dd {\mu}(x),\quad \forall t >0.
\end{align}
Integrating \eqref{eq:K0prim2} with respect to $t$-variable, and using Fubini's theorem we obtain
\begin{align}\label{Expression-K}
  \nonumber K_0(t) = & K_0(2b) - \int_{2b}^t \int_0^{\infty}e^{-\tau x} \,\dd {\mu}(x)\dd \tau \\
  \nonumber =&K_0(2b)-\int_0^{\infty} \int_{2b}^t e^{- \tau x}\dd \tau \dd {\mu}(x) \\
  =&K_0(2b)+\int_0^{\infty}\tfrac{e^{-t x}-e^{-2bx}}{x}\dd {\mu}(x).
\end{align}
By virtue of Fubini's theorem and \eqref{lambda_n}, we can rewrite the spectrum $\lambda_{n,b}$ as
\begin{align*}
  \lambda_{n,b} =&  2\int_0^\pi \int_0^\infty
  \Big(\frac{e^{-2b\,x \sin \eta }-e^{-2bx}}{x}\dd {\mu}(x)\Big) e^{i2n\eta}\dd \eta \nonumber \\
  = &2\int_0^\infty \int_0^\pi \big( e^{-2bx \sin \eta }
  - e^{-2bx} \big) e^{i2n\eta} \,\dd \eta\, \tfrac{\dd{\mu}(x)}{x} \nonumber \\
  =& 2\int_0^{\infty}\phi_{n}(bx)\tfrac{\dd {\mu}(x)}{x}.
\end{align*}
This achieves the proof of the desired result.
\end{proof}

\section{Analysis of the universal function $\phi_n$}\label{sec:phi_n}
In this section, we shall study various properties of the real-valued function $\phi_n$,
which is defined by \eqref{def:phi-n}.
In \eqref{eq:lamb-n}, we encountered the universal function $\phi_n$ which naturally emerges  in the analysis of the spectrum $\lambda_{n,b}$
of the linearized operator $\partial_r F(\Omega,0)$. The positivity and monotonicity of $\phi_n$, together with
its  asymptotic behavior  and the rate of  decay of $\phi_n - \phi_{n+1}$ are pivotal elements  in the spectral study. We plan to explore these aspects
along  the Subsections \ref{subsec:pos-mono} - \ref{subsec:decay-phi}. Additionally, we leverage some of the properties of $\phi_n$ to introduce a lemma regarding the convexity of $(\lambda_{n,b})_{n\in\mathbb{N}^\star}$ in Subsection \ref{subsec:lam-conv}. This lemma pertains to a specific class of nonnegative measures.
\\[1mm]
Defining $\phi_n$ as in \eqref{def:phi-n} through an integral featuring oscillating trigonometric functions in the integrand makes it challenging to establish  the aforementioned properties, such as positivity or the monotonity. Fortunately, we discover that $\phi_n$ obeys  an ordinary differential equation \eqref{eq:phi-ODE}, which significantly helps us in establishing the desired properties of $\phi_n$. We basically employ  suitable  comparison principles  to \eqref{eq:phi-ODE} as it will be stated in Subsection \ref{subsec:comp-thm}.
\subsection{Comparison theorem}\label{subsec:comp-thm}
We intend to detail a  comparison principle that serves as the cornerstone for establishing
several qualitative and quantitative properties of $\phi_n$.
\begin{lemma}\label{lem:comparison}
Let $a, b:(0,\infty)\to (0,\infty)$ be two given continuous functions and $f\in C^2((0,\infty))$ be a solution to
\begin{eqnarray*}
\left\{\begin{array}{ll}
  f^{\prime\prime}(x) + a(x)f^\prime(x) - b(x)f(x)\leqslant 0, \quad\forall  x>0,\\
   f(0)\geqslant 0,\quad \lim\limits_{x\to \infty}f(x)\geqslant 0.\end{array}\right.
\end{eqnarray*}
Then $f$ is non-negative on $(0,\infty)$, that is, $f(x)\geqslant 0$.
In addition, if $f$  satisfies
\begin{align*}
  f^{\prime\prime}(x) + a(x)f^\prime(x) - b(x)f < 0,\quad \forall  x>0,
\end{align*}
then $f$ is strictly positive on $(0,\infty)$.
\end{lemma}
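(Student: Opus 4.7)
The plan is to proceed by contradiction and adapt the classical interior minimum principle to the non-compact interval $(0,\infty)$. The key point is that the two boundary conditions $f(0)\geqslant 0$ and $\lim_{x\to\infty}f(x)\geqslant 0$ are precisely what is needed to reduce the problem to one on a compact subinterval, where the minimum is guaranteed to be attained at an interior critical point; there, the sign structure of the coefficients $a, b > 0$ forces a contradiction.

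\textbf{Step 1 (attainment of the infimum at an interior point).} Assume, for contradiction, that $f$ is not non-negative, and set $m \triangleq \inf_{x>0}f(x)<0$. Using the boundary data, one picks $0<\delta<M$ so that $f(x)>\tfrac{m}{2}$ for every $x\in(0,\delta)\cup(M,\infty)$. On the compact interval $[\delta,M]$, the continuous function $f$ attains its minimum at some $x_0$, and by construction $f(x_0)=m<0$. In particular $x_0\in(\delta,M)\subset(0,\infty)$ is an interior global minimum, so $f'(x_0)=0$ and $f''(x_0)\geqslant 0$. Inserting this into the hypothesis and using $b(x_0)>0$ together with $f(x_0)<0$, one gets
\[
0\;\leqslant\;f''(x_0)+a(x_0)f'(x_0)\;\leqslant\;b(x_0)f(x_0)\;<\;0,
\]
which is the desired contradiction. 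This proves $f\geqslant 0$ on $(0,\infty)$.

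\textbf{Step 2 (strict positivity).} Assume now the strict differential inequality, and suppose there existed $x_0>0$ with $f(x_0)=0$. Since $f\geqslant 0$ by Step 1, such an $x_0$ is automatically a global minimum, hence $f'(x_0)=0$ and $f''(x_0)\geqslant 0$; the strict version of the hypothesis evaluated at $x_0$ then gives $f''(x_0)<b(x_0)\cdot 0=0$, a contradiction, and therefore $f>0$ on $(0,\infty)$. The only mildly delicate ingredient in the whole argument is the reduction in Step~1: on a non-compact interval the infimum of a continuous function need not be attained, and it is exactly the joint use of the behaviour at $0$ and at $\infty$ that traps the infimum inside a compact subinterval so that the classical interior minimum principle applies. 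No further regularity or growth information on $f$, $a$ or $b$ is needed beyond what is stated.
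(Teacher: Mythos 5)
Your proof is correct and follows essentially the same approach as the paper: argue by contradiction, use the boundary conditions at $0$ and $\infty$ to trap the infimum in a compact subinterval so that it is attained at an interior point, and then invoke the second-derivative test together with the sign of $b$. Your Step~2 spells out (a little more explicitly than the paper does) that one first applies the non-strict result to get $f\geqslant 0$, so that any zero of $f$ is automatically the attained infimum; this is a useful clarification but not a different method.
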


\begin{proof}[Proof of Lemma \ref{lem:comparison}]
We will start with  proving the first  statement. For this aim, we shall argue by contradiction. Assume that $f$ takes strictly negative values at some points of $(0,\infty)$.
Then in light of the assumptions $f(0)\geqslant 0$ and $\lim\limits_{x\to \infty}f(x)\geqslant 0,$ one can find some $x_0>0$ such that
\begin{align*}
  \inf_{x>0}f(x)=f(x_0) < 0.
\end{align*}	
Hence,
\begin{align}\label{prop:f-inf}
  f^\prime(x_0)=0, \quad f^{\prime\prime}(x_0)\geqslant 0.
\end{align}
Coming back to the differential inequality we find
\begin{align*}
  f''(x_0)\leqslant b(x_0)f(x_0) < 0,
\end{align*}
which is a contradiction.\\
For the second assertion, we assume that $f$ takes non-positive values at some points of $(0,\infty)$,
then there exists some $x_0>0$ so that $\inf_{x>0} f(x) = f(x_0) \leqslant 0 $ which satisfies \eqref{prop:f-inf},
but using the strict  differential inequality  gives $f''(x_0) < b(x_0) f(x_0) \leqslant 0$, and it yields a contradiction.\\
This concludes the proof of the desired result.
\end{proof}

\subsection{Positivity and monotonicity of $\phi_n$}\label{subsec:pos-mono}
This subsection is dedicated to exploring the    application of the comparison theorem in establishing some   qualitative properties of $\phi_n$ introduced in \eqref{def:phi-n}. We shall show the following result.
\begin{proposition}\label{thm:mono}
  For every $n\geqslant 1$ and $x>0$, $\phi_n(x)>0$ and the map $n\mapsto \phi_n(x)$ is strictly decreasing.
\end{proposition}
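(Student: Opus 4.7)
The plan is to exploit the fact that $\phi_n$ satisfies the linear second-order ODE \eqref{eq:phi-ODE}, and to apply the comparison principle of Lemma \ref{lem:comparison} in two successive steps: first to $\phi_n$ itself in order to deduce positivity, and then to the difference $f_n \triangleq \phi_n - \phi_{n+1}$ in order to deduce strict monotonicity. In both steps the ODE takes the form $\phi_n'' + a(x)\phi_n' - b(x)\phi_n = (\text{forcing})$ with $a(x)=1/x$ and $b(x)=4+4n^2/x^2$, both strictly positive on $(0,\infty)$, so the structural hypotheses of Lemma \ref{lem:comparison} are automatically satisfied and only the sign of the right-hand side and the boundary behaviour need to be checked.

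For the positivity statement, equation \eqref{eq:phi-ODE} has the form
\begin{equation*}
  \phi_n''(x) + \tfrac{1}{x}\phi_n'(x) - \Bigl(4+\tfrac{4n^2}{x^2}\Bigr)\phi_n(x) = -\tfrac{4}{x},
\end{equation*}
whose right-hand side is strictly negative on $(0,\infty)$. The boundary values are $\phi_n(0^+)=\int_0^{\pi}\cos(2n\eta)\dd\eta=0$ for every $n\geqslant 1$, and $\lim_{x\to\infty}\phi_n(x)=0$ by dominated convergence (since $e^{-2x\sin\eta}\to 0$ pointwise on $(0,\pi)$ and the integrand is bounded by $1$ on the finite interval $(0,\pi)$). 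The strict form of Lemma \ref{lem:comparison} then yields $\phi_n(x)>0$ on $(0,\infty)$.

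For the strict monotonicity, I subtract the ODEs for $\phi_n$ and $\phi_{n+1}$. Since the inhomogeneous term $-4/x$ is independent of $n$, it cancels, and a direct algebraic rearrangement of the $b(x)$-coefficients leaves
\begin{equation*}
  f_n''(x) + \tfrac{1}{x}f_n'(x) - \Bigl(4 + \tfrac{4n^2}{x^2}\Bigr)f_n(x) = -\tfrac{4(2n+1)}{x^2}\,\phi_{n+1}(x).
\end{equation*}
Thanks to the positivity just established, this right-hand side is strictly negative on $(0,\infty)$, while $f_n(0^+)=0=\lim_{x\to\infty}f_n(x)$ follow from the endpoint values of each $\phi_k$. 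A second application of Lemma \ref{lem:comparison} therefore gives $f_n>0$ on $(0,\infty)$, that is $\phi_n(x)>\phi_{n+1}(x)$ for every $n\geqslant 1$ and $x>0$, which is precisely the strict decay of $n\mapsto \phi_n(x)$.

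The main subtlety is really the bootstrapping between the two steps: the comparison argument for $f_n$ only works because the $n$-independent forcing $-4/x$ in \eqref{eq:phi-ODE} cancels in the subtraction, leaving behind an inhomogeneity of the form $-\tfrac{4(2n+1)}{x^2}\phi_{n+1}(x)$ whose sign is dictated precisely by the previously proved positivity. Without first establishing $\phi_{n+1}>0$, the right-hand side would be of indefinite sign and the comparison principle would not close. The oscillatory representation \eqref{def:phi-n}, on which a naive direct proof of either positivity or monotonicity founders because of sign changes in $\cos(2n\eta)$ or $\sin((2n+1)\eta)$, is used only to compute the benign boundary values at $x=0$ and $x=\infty$.
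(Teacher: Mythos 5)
Your proposal is correct and follows essentially the same route as the paper: starting from the ODE \eqref{eq:phi-ODE}, apply the comparison principle of Lemma~\ref{lem:comparison} first to $\phi_n$ (using that the forcing $-4/x$ is strictly negative and $\phi_n(0^+)=\lim_{x\to\infty}\phi_n(x)=0$) to obtain positivity, and then to $\chi_n=\phi_n-\phi_{n+1}$ (whose forcing $-4\tfrac{2n+1}{x^2}\phi_{n+1}$ is strictly negative by the first step) to obtain strict monotonicity. The only cosmetic difference is that you justify the boundary limits by dominated convergence and a direct integral computation, whereas the paper invokes Riemann--Lebesgue, and you take \eqref{eq:phi-ODE} as given rather than re-deriving it from the Anger and Weber ODEs.
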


\begin{proof}[Proof of Proposition \ref{thm:mono}]
For $\mathbf{z} \in \mathbb{C}$, define
\begin{align*}
  \Phi_n(\mathbf{z}) \triangleq \frac{1}{\pi} \int_0^\pi e^{i(- \mathbf{z}\sin \eta + 2n\eta) }\dd \eta.
\end{align*}
Recall the Anger and Weber functions defined successively by, see 8.580 in \cite{GR15},
\begin{align*}
  \mathbf{J}_\nu(\mathbf{z}) = \frac{1}{\pi} \int_0^\pi \cos(\nu \eta - \mathbf{z} \sin \eta) \dd \eta\quad 
  \hbox{and}\quad \mathbf{E}_\nu(\mathbf{z}) = \frac{1}{\pi} \int_0^\pi \sin(\nu\eta - \mathbf{z} \sin \eta) \dd \eta.
\end{align*}
Then we find
\begin{align*}
  \Phi_n(\mathbf{z})=\mathbf{J}_{2n}(\mathbf{z})+i \mathbf{E}_{2n}(\mathbf{z})
\end{align*}
and
\begin{align}\label{Rela-1}
  \phi_n(x)=\pi\Phi(-2 i x).
\end{align}
Now, it is a classical fact that  the functions $\mathbf{J}_{2n}(\mathbf{z})$ and $\mathbf{E}_{2n}(\mathbf{z})$  satisfy the following ODEs, for instance see 8.584 in \cite{GR15},
\begin{align*}
  \mathbf{J}_{2n}^{\prime\prime}(\mathbf{z}) + \mathbf{z}^{-1} \mathbf{J}_{2n}^\prime(\mathbf{z}) + \left(1-\tfrac{4n^2}{\mathbf{z}^2}\right)
  \mathbf{J}_{2n}(\mathbf{z}) = 0
\end{align*}
and
\begin{align*}
  \mathbf{E}_{2n}^{\prime\prime}(\mathbf{z}) + \mathbf{z}^{-1}\mathbf{E}_{2n}^\prime(\mathbf{z}) + \left(1-\tfrac{4n^2}{\mathbf{z}^2}\right) \mathbf{E}_{2n}(\mathbf{z}) = -\frac{2}{\pi \mathbf{z}}\cdot
\end{align*}
It follows that
\begin{align*}
  \Phi_n^{\prime\prime}(\mathbf{z}) + \mathbf{z}^{-1}\Phi_n^\prime(\mathbf{z}) +\left(1-\tfrac{4n^2}{\mathbf{z}^2}\right)\Phi_n(\mathbf{z})
  = - \frac{2 i}{\pi \mathbf{z}}\cdot
\end{align*}
This implies by virtue of \eqref{Rela-1} that
\begin{align}\label{eq:phi-ODE}
  \phi_n^{\prime\prime}(x) + x^{-1}\phi_n^\prime(x) - 4\left(1+\tfrac{n^2}{x^2}\right)\phi_n(x) = -\frac{4}{ x}, \quad  x>0.
\end{align}
On the other hand, one may get  from Riemann-Lebesgue's lemma applied with \eqref{def:phi-n} that
\begin{align}\label{eq:phi-n0}
  \forall n\geqslant 1,\quad \phi_n(0)=0,\quad \textrm{and}\quad  
  \lim_{x\to\infty}\phi_n(x)=0.
\end{align}
Hence, Lemma \ref{lem:comparison} guarantees that
\begin{align}\label{def-T1}
  \forall x>0, \quad \phi_n(x)>0.
\end{align}
\vskip1mm
\noindent Now we show that for any $x>0$ the sequence $n\mapsto \phi_n(x)$ is strictly decreasing.
For this aim, we define
\begin{align*}
  \chi_n(x) \triangleq \phi_n(x) - \phi_{n+1}(x).
\end{align*}
Then using  the equation \eqref{eq:phi-ODE} we find
\begin{align}\label{d-ff-2}
  \chi_n^{\prime\prime}(x) + x^{-1}\chi_n^\prime(x) - 4\left(1+\tfrac{n^2}{x^2}\right)\chi_n(x)
  = - 4 \tfrac{2n+1}{x^2}(x) \phi_{n+1}(x),
  \quad x>0,
\end{align}
with
\begin{align*}
  \chi_n(0)=0, \quad  
  \lim_{x\to\infty}\chi_n(x) = 0.
\end{align*}
Thus, \eqref{def-T1} and Lemma \ref{lem:comparison} ensure that
\begin{align*}
  \forall x>0, \quad \chi_n(x) > 0,
\end{align*}
which implies the strict monotonicity of $\phi_n$.
This concludes the proof of the desired results.
\end{proof}

\subsection{Asymptotic structure of $\phi_n$}\label{subsec:asym1}
The next goal is to explore   the asymptotic behavior of $\phi_n$ with respect to $n$. This will be the crucial step in describing  the asymptotic behavior of the spectrum given through \eqref{eq:lamb-n}.
For this purpose, we shall rescale the function $\phi_n$ as follows,
\begin{align}\label{def:varphi-n}
  \phi_n(x) \triangleq \tfrac{1}{n}\varphi_n(\tfrac{x}{n}).
\end{align}
Then from \eqref{eq:phi-ODE} we easily find  that
\begin{align}\label{eq:varphi_n}
  \tfrac{1}{n^2}\left(\varphi''_n(x)+\tfrac{1}{x}\varphi_n'(x)\right)
  - 4\left(1+\tfrac{1}{x^2}\right)\varphi_n(x)=-\tfrac{4}{x},\quad x>0.
\end{align}
In the following, we plan to provide an expansion formula of $\varphi_n(x)$ in terms of $\frac{1}{n}$.

\begin{proposition}\label{prop:asypt}
For every $x>0$, $n\geqslant 1$, $N\in \mathbb{N}$, we have
\begin{align}\label{eq:decom-phi-n}
  \varphi_n(x)=\sum_{k=0}^{N}\frac{1}{n^{2k}}\Psi_k(x)+g_{n,N}(x),
\end{align}
with
\begin{align}
  \Psi_0(x)&=\frac{x}{1+x^2}, \label{eq:psi-rela1} \\
  \Psi_{k+1}(x) & = \frac{x^2}{4(1+x^2)}\Big(\Psi_k^{\prime\prime}(x)+\frac{1}{x}\Psi'_k(x)\Big),\quad \forall {k\in\mathbb{N},} \label{eq:psi-rela2}
\end{align}
and
\begin{align}\label{eq:gnN}
  |g_{n,N}(x)|\leqslant \frac{C}{n^{2N+\frac{2}{3}-\delta}}\frac{x^\delta}{1+x},
  \quad \forall \delta\in [0,\tfrac{1}{3}),
\end{align}
where $C= C(N,\delta)>0$ is independent of $n$ and $x$.
\end{proposition}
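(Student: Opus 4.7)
The plan is to treat \eqref{eq:varphi_n} as a singular perturbation problem with small parameter $\frac{1}{n^2}$ and carry out a WKB-type expansion. Substituting the ansatz $\varphi_n = \sum_{k=0}^{N}\frac{1}{n^{2k}}\Psi_k + g_{n,N}$ into \eqref{eq:varphi_n} and collecting powers of $\frac{1}{n^2}$, the zeroth-order balance $-4(1+\tfrac{1}{x^2})\Psi_0=-\tfrac{4}{x}$ forces $\Psi_0(x)=\frac{x}{1+x^2}$, matching \eqref{eq:psi-rela1}; the algebraic identity $\frac{x^2}{4(1+x^2)}\cdot 4(1+\tfrac{1}{x^2})=1$ then makes the recursion at order $\frac{1}{n^{2k}}$ invertible and yields \eqref{eq:psi-rela2}. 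The leftover remainder will then satisfy
\begin{equation*}
\tfrac{1}{n^2}\bigl(g_{n,N}''+\tfrac{1}{x}g_{n,N}'\bigr) - 4\bigl(1+\tfrac{1}{x^2}\bigr)g_{n,N} = -\tfrac{1}{n^{2(N+1)}}\bigl(\Psi_N''+\tfrac{1}{x}\Psi_N'\bigr).
\end{equation*}

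Next I would establish by a short induction on $k$ that each $\Psi_k$ is a smooth rational function on $(0,\infty)$ with $\Psi_k(0)=0$ and $\Psi_k(x)\to 0$ as $x\to\infty$, and that $\Psi_k''+\tfrac{1}{x}\Psi_k'$ has at worst a $\tfrac{1}{x}$ singularity at the origin together with algebraic decay at infinity (the prototype is the direct computation $\Psi_0''+\tfrac{1}{x}\Psi_0' = \tfrac{1}{x}-9x+O(x^3)$). The boundary behaviour of the remainder $g_{n,N}(0)=0=\lim_{x\to\infty}g_{n,N}(x)$ would follow from $\varphi_n(0)=n\phi_n(0)=0$ by \eqref{eq:phi-n0} and from the decay $\varphi_n(y)=n\phi_n(ny)\to 0$ at infinity; for the latter, a preliminary $O(1/y)$ bound on $\phi_n$ obtained directly from \eqref{eq:phi-ODE} via Lemma \ref{lem:comparison} provides what is needed.

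The quantitative estimate on $g_{n,N}$ is the technical heart. Multiplying the residual equation by $n^2$ puts it in the form
\begin{equation*}
g_{n,N}''+\tfrac{1}{x}g_{n,N}' - 4n^2\bigl(1+\tfrac{1}{x^2}\bigr)g_{n,N} = -\tfrac{1}{n^{2N}}\bigl(\Psi_N''+\tfrac{1}{x}\Psi_N'\bigr),
\end{equation*}
which is of the type covered by Lemma \ref{lem:comparison} with $a(x)=\tfrac{1}{x}>0$ and $b(x)=4n^2(1+\tfrac{1}{x^2})>0$. I would invoke the lemma with the supersolution
\begin{equation*}
  h_{n,N,\delta}(x) \triangleq \tfrac{C_{N,\delta}}{n^{2N+2/3-\delta}}\cdot\tfrac{x^\delta}{1+x},
\end{equation*}
which vanishes at both endpoints, and check that $h_{n,N,\delta}\pm g_{n,N}$ satisfies the hypotheses of the lemma, equivalently
\begin{equation*}
  h_{n,N,\delta}''+\tfrac{1}{x}h_{n,N,\delta}' - 4n^2\bigl(1+\tfrac{1}{x^2}\bigr)h_{n,N,\delta} \leqslant -\tfrac{1}{n^{2N}}\bigl|\Psi_N''+\tfrac{1}{x}\Psi_N'\bigr|.
\end{equation*}
The hard part will be verifying this differential inequality uniformly in $x>0$ and $n\geqslant 1$: near $x=0$ the ``Laplacian'' contribution $h''+h'/x$ is positive and of order $x^{\delta-2}$, the same order as the elliptic term $-4n^2(1+\tfrac{1}{x^2})h$, so the two compete and one needs a sharp inductive bound — roughly $|\Psi_N''(x)+\tfrac{1}{x}\Psi_N'(x)|\leqslant \tfrac{C_N}{x(1+x)^{2N+2}}$ — to dominate the $1/x$-singularity of the source. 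The precise balance between the source's singular behaviour at the origin, the positive correction $h''+h'/x$, and the elliptic term will produce the stated exponent $2N+\tfrac{2}{3}-\delta$, while the restriction $\delta\in[0,\tfrac{1}{3})$ is what guarantees the uniform domination and the correct boundary behaviour of the supersolution.
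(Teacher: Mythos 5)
Your plan is a genuinely different route from the paper's.  The paper diagonalizes the residual equation by the Hankel transform $\mathcal{H}_{2n}$, inverts, and controls the resulting oscillatory integral via the uniform Bessel bound $|J_n|\lesssim n^{-1/3}$ and the interpolation $|J_n(x)|\leqslant C_0\,n^{-1/3}x^\delta$ (with $\delta<\tfrac13$ needed so the integrals converge after rescaling).  You instead propose a maximum-principle argument with an explicit supersolution, resting on Lemma \ref{lem:comparison} — the same lemma the authors use elsewhere (positivity of $\phi_n$, the bounds in Lemma \ref{lem:bound-varphi}, the estimate of $\phi_n-\phi_{n+1}$), but \emph{not} for this proposition.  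Both routes share the preliminary steps: the derivation of the recursion \eqref{eq:psi-rela1}--\eqref{eq:psi-rela2}, the pointwise bound $|\mathbf{L}_0\Psi_N(x)|\leqslant C_N\,x^{-1}(1+x^2)^{-(N+1)}$ (eq.\ \eqref{Esti-q} in the paper), and the vanishing of $g_{n,N}$ at $0$ and $\infty$.

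The ``hard part'' you flag is in fact routine, and your supersolution closes — indeed it overshoots.  Writing $h=Au$ with $u(x)=\tfrac{x^\delta}{1+x}$, one computes exactly
\begin{equation*}
\frac{u''+\tfrac1x u'}{u}=\frac{\delta^2}{x^2}-\frac{2\delta+1}{x(1+x)}+\frac{2}{(1+x)^2},
\end{equation*}
so that for $n\geqslant1$ and $\delta\in[0,1)$,
\begin{equation*}
h''+\tfrac1x h'-4n^2\bigl(1+\tfrac1{x^2}\bigr)h
=Au\Bigl[\tfrac{\delta^2-4n^2}{x^2}-\tfrac{2\delta+1}{x(1+x)}+\tfrac{2}{(1+x)^2}-4n^2\Bigr]
\leqslant -2n^2Au\bigl(1+\tfrac1{x^2}\bigr),
\end{equation*}
using $4n^2-\delta^2\geqslant3n^2$ and $4n^2-2\geqslant2n^2$.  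There is thus no real ``competition'' between the Laplacian and elliptic contributions near $x=0$: the elliptic term \emph{always} dominates, by a factor of order $n^2/\delta^2$.  Comparing with $\tfrac{1}{n^{2N}}|\mathbf{L}_0\Psi_N|\leqslant\tfrac{C_N}{n^{2N}\,x(1+x^2)^{N+1}}$, the required differential inequality reduces to
\begin{equation*}
2A\,n^{2N+2}\,\frac{x^{\delta-1}(1+x^2)^{N+2}}{1+x}\geqslant C_N\qquad\text{for all }x>0,
\end{equation*}
and since the left-hand fraction tends to $+\infty$ as $x\to0^+$ (because $\delta<1$) and as $x\to\infty$, it has a strictly positive infimum $m_{N,\delta}>0$; taking $A=\tfrac{C_N}{2m_{N,\delta}}n^{-(2N+2)}$ therefore works.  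In other words, your route delivers the sharper and cleaner estimate
\begin{equation*}
|g_{n,N}(x)|\leqslant\frac{C_{N,\delta}}{n^{2N+2}}\,\frac{x^\delta}{1+x},\qquad\delta\in[0,1),
\end{equation*}
which is optimal in $n$ (since $g_{n,N}-g_{n,N+1}=n^{-(2N+2)}\Psi_{N+1}$) and trivially implies the stated \eqref{eq:gnN}.  This means your closing paragraph mis-diagnoses the source of the exponent $2N+\tfrac23-\delta$ and the restriction $\delta<\tfrac13$: neither is forced by the comparison argument.  They are artifacts of the Hankel route — the $\tfrac13$ loss from the uniform Bessel estimate $|J_n|\lesssim n^{-1/3}$ and the convergence requirement $\int_0^\infty\tfrac{s^{2/3+\delta}}{1+s^2}\dd s<\infty$.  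So the plan is correct and, once the elementary computation above is inserted, gives a simpler proof of a slightly stronger statement than the one in the paper.
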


\begin{proof}[Proof of Proposition \ref{prop:asypt}]
We define the second order differential operators
\begin{align}\label{def:L0f}
  \mathbf{L}_0f(x) \triangleq f''(x)+\tfrac{1}{x}f'(x),
\end{align}
and
\begin{align}\label{def:Lf}
  \mathbf{L}f(x) \triangleq \tfrac{1}{n^2} \left(f^{\prime\prime}(x)+\tfrac{1}{x}f^\prime(x)\right)
  -4 \left(1+\tfrac{1}{x^2} \right)f(x) .
\end{align}
Putting this ansatz \eqref{eq:decom-phi-n}
into the equation \eqref{eq:varphi_n}, we obtain that
\begin{align*}
  \sum_{k=0}^N \frac{1}{n^{2k+2}} \mathbf{L}_0 \Psi_k(x) - \frac{4(x^2 + 1)}{x^2}
  \sum_{k=0}^N \frac{1}{n^{2k}} \Psi_k(x) + \mathbf{L} g_{n,N}(x) = -\frac{4}{x},
\end{align*}
that is,
\begin{align*}
  & \sum_{k=0}^{N-1} \frac{1}{n^{2k+2}} \Big( \mathbf{L}_0 \Psi_k(x) - \frac{4(x^2 + 1)}{x^2} \Psi_{k+1}(x)\Big)
  + \frac{\mathbf{L}_0 \Psi_N(x)}{n^{2N+2}}
  + \mathbf{L} g_{n,N}(x)  \\
  & = \frac{4(x^2 + 1)}{x^2}\Psi_0(x) -\frac{4}{x}.
\end{align*}
Taking advantage of the relations \eqref{eq:psi-rela1}-\eqref{eq:psi-rela2} gives the error equation
\begin{align}
  \mathbf{L}g_{n,N}(x) =-\frac{1}{n^{2N+2}}\mathbf{L}_0\Psi_N(x).
\end{align}
Now, let us consider
\begin{align*}
  \widetilde{\mathbf{L}}_0 f(x) \triangleq \frac{x^2}{4(1+x^2)}\mathbf{L}_0 f(x),
\end{align*}
then we write
\begin{align*}
  \Psi_{k+1}(x) = \widetilde{\mathbf{L}}_0\Psi_k(x), \quad \textrm{and} \quad
  \mathbf{L} g_{n,N}(x)=-\frac{1}{n^{2N+2}}\mathbf{L}_0 \widetilde{\mathbf{L}}^N_0\Psi_0(x)
  \triangleq \frac{1}{n^{2N+2}}F_N(x).
\end{align*}
By straightforward computations, using for instance an induction argument, we obtain
\begin{align}\label{Esti-q}
  |\Psi_n(x)|\leqslant\frac{C_n|x|}{(1+x^2)^{n+1}}, \quad\textrm{and}\quad |F_N(x)|\leqslant \frac{C_N}{|x|(1+x^2)^{N+1}}\cdot
\end{align}
Concerning the equation of $g_{n,N}$, it can be written in the form
\begin{align}\label{eq:g_N}
  g''_{n,N}(x)+\frac{1}{x}g'_{n,N}(x)-\frac{4n^2}{x^2}g_{n,N}(x)-4n^2g_{n,N}(x) = \frac{1}{n^{2N}} F_N(x)\cdot
\end{align}
In view of \eqref{eq:decom-phi-n} and the relation $\varphi_n(x) = n\, \phi_n(nx)$, we claim that
\begin{align*}
  \lim_{x\to 0}g_{n,N}(x)=0,\quad \lim_{x\to \infty} x^{\frac{1}{2}}(|g_{n,N}(x)|+|g'_{n,N}(x)|)=0,\quad \forall n,\,N\in \mathbb{N}.
\end{align*}
Indeed, this can be directly justified by the dominated convergence theorem, noticing that for every $x>0$,
\begin{align*}
  |\phi_n(x)| + | \phi_n'(x)| \leqslant 3 \int_0^\pi e^{-2\sin (\eta)x}\dd \eta \leqslant
  6\int_0^{\frac{\pi}{2}} e^{-\frac{4}{\pi}\eta x}\dd \eta \leqslant \min\big\{3\pi, \tfrac{6}{x} \big\},
\end{align*}
and
\begin{align*}
  \forall k\in \mathbb{N},\quad \lim_{x\to 0}\Psi_k(x)=0,\quad \lim_{x\to \infty} x^{\frac{1}{2}}(|\Psi_k(x)|+|\Psi'_k(x)|)=0.
\end{align*}
To estimate this error function, we find it convenient to use the Hankel transform, for more details see Subsection \ref{subsec:Bessel}.
Then applying the Hankel transform $\mathcal{H}_{2n}$ to the equation \eqref{eq:g_N} and using \eqref{hankel:diff}, we get
\begin{align*}
  (-r^2-4n^2)(\mathcal{H}_{2n}\,g_{n,N})(r)=\frac{1}{n^{2N}} \mathcal{H}_{2n}F_N(r).
\end{align*}
In light of the inverse formula \eqref{hankel:inverse} we deduce that
\begin{align}\label{eq:gnN2}
  \forall x>0,\quad g_{n,N}(x)=-\frac{1}{n^{2N}}\int_0^{\infty}\frac{r}{r^2+4n^2} \big(\mathcal{H}_{2n}F_N\big)(r) J_{2n}(xr)\dd r.
\end{align}
Via a change of variable and integration by parts, we find that
\begin{align}\label{decom:gnN}
  g_{n,N}(x)=&-\frac{1}{n^{2N}}\int_0^{\infty}\frac{r}{r^2+4n^2x^2}(\mathcal{H}_{2n}F_N)\big(\tfrac{r}{x}\big)J_{2n}(r)\dd r \nonumber\\
  =&\frac{1}{n^{2N}}\int_0^{\infty}\Big(\frac{r^{-2n}}{r^2+4n^2x^2}(\mathcal{H}_{2n}F_N)\big(\tfrac{r}{x}\big)\Big)^\prime r^{2n+1}
  J_{2n+1}(r)\dd r \nonumber \\
  =& - \frac{1}{n^{2N}}\int_0^{\infty}\frac{2n(\mathcal{H}_{2n}F_N)
  \big(\frac{r}{x}\big)}{r^2+4n^2x^2} J_{2n+1}(r)\dd r
  - \frac{1}{n^{2N}}\int_0^\infty \frac{2r^2(\mathcal{H}_{2n}F_N)
  \big(\frac{r}{x}\big)}{(r^2+4n^2x^2)^2} J_{2n+1}(r)\dd r \nonumber \\
  &\; + \frac{1}{n^{2N}}\int_0^{\infty}\frac{r(\mathcal{H}_{2n}F_N)'(\frac{r}{x})}{(r^2+4n^2x^2)x}J_{2n+1}(r)\dd r \nonumber \\
  \triangleq & \, \textnormal{I} + \textnormal{II}+\textnormal{III},
\end{align}
where in the second line we have used the classical identity $r^{2n+1}J_{2n}(r)= \big(r^{2n+1}J_{2n+1}(r)\big)'$.
\\[1mm]
We point out  that in \cite{Landau00,Olenko06}, it was proved  the existence of  an absolute constant $C_0>0$ independent of $n,x$ so that
\begin{align*}
  |J_n(x)|\leqslant C_0 \min\Big\{n^{-\frac{1}{3}}, x^{-\frac{1}{3}}\Big\},\quad \forall n\in \mathbb{N}, x>0.
\end{align*}
Combining this estimate with the definition \eqref{def:HankelTrans} and \eqref{Esti-q} yields
\begin{align}\label{es:H2nFN}
  |(\mathcal{H}_{2n}F_{N})(r)|\leqslant & \int_0^\infty x |F_N(x)| |J_{2n+1}(x r)|\dd x \nonumber \\
  \leqslant & C_0 \min\Big\{ r^{-\frac{1}{3}}\int_0^{\infty}x^{\frac{2}{3}}|F_N(x)|\dd x,\,
  n^{-\frac{1}{3}} \int_0^\infty x |F_N(x)|\dd x\Big\} \nonumber \\
  \leqslant & C_N\min \Big\{r^{-\frac{1}{3}}\int_0^{\infty}\frac{1}{x^{\frac{1}{3}}(1+x^2)^{N+1}}
  \dd r,\,
  n^{-\frac{1}{3}} \int_0^\infty \frac{1}{(1+x^2)^{N+1}}\dd x\Big\} \nonumber \\
  \leqslant & C_N \min\Big\{r^{-\frac{1}{3}}, n^{-\frac{1}{3}} \Big\}.
\end{align}
Similarly, using the relation \eqref{bessel:recurrence} allows to get
\begin{align*}
  |(\mathcal{H}_{2n}F_{N})'(r)| & \leqslant \int_0^{\infty} x^2 |F_N(x)| |J_{2n}'(x r)|\dd x \\
  & \leqslant \int_0^{\infty} x^2 |F_N(x)| \Big( |J_{2n+1}(x r)| + |J_{2n-1}(x r)| \Big) \dd x \\
  & \leqslant C_N r^{-\frac{1}{3}}.
\end{align*}
Using the interpolation inequlaity and \eqref{bessel:recurrence}
\begin{align}\label{eq:Jn-fact}
  |J_n(x)|\leqslant |J_n(x)|^{1-\delta}|J_n(x)-J_n(0)|^\delta\leqslant C_0 n^{-\frac{1}{3}} x^\delta,\quad \delta\in [0,1], n\geqslant 1,
\end{align}
together with a change of variables,
we infer that for every $\delta\in [0,\frac{1}{3})$,
\begin{align*}
  |\textnormal{I}|&\leqslant C_N\frac{ n^{1-\frac{1}{3}}}{n^{2N}} \int_0^\infty \frac{x^{\frac{1}{3}} r^{- \frac{1}{3} + \delta}}{r^2+4n^2x^2}\dd r\\
  &\leqslant \frac{C_N}{n^{2N+\frac{2}{3}-\delta}} \frac{1}{x^{1-\delta}} \int_0^\infty \frac{s^{\delta-\frac13}}{1+s^2}\dd s\\
 & \leqslant \frac{C_N}{n^{2N+\frac{2}{3}-\delta}}\frac{1}{x^{1-\delta}}\cdot
\end{align*}
Proceeding in the same way, we successively get
\begin{align*}
  |\textnormal{II}|\leqslant C_N \frac{n^{-\frac{1}{3}}}{n^{2N}}
  \int_0^{\infty}\frac{x^{\frac{1}{3}}r^{-\frac{1}{3} +\delta}}{r^2+4n^2x^2}\dd r
  \leqslant \frac{C_N }{n^{2N+\frac{5}{3}-\delta}}\frac{1}{x^{1-\delta}}
\end{align*}
and
\begin{align*}
  |\textnormal{III}|\leqslant C_N \frac{1}{n^{2N}x}
  \int_0^{\infty}\frac{r^{1-\frac{1}{3} + \delta}x^{\frac{1}{3}}}{r^2+4n^2x^2} n^{-\frac{1}{3}}\dd r
  \leqslant  \frac{C_{N,\delta}}{n^{2N+\frac{2}{3}-\delta}}\frac{1}{x^{1-\delta}}\cdot
\end{align*}
Combining these estimates with \eqref{decom:gnN}, we obtain
\begin{align*}
  |g_{n,N}(x)|\leqslant  \frac{C_{N,\delta}}{n^{2N+\frac{2}{3} -\delta}}\frac{1}{x^{1-\delta}}\cdot
\end{align*}
In addition, using \eqref{eq:gnN2} and \eqref{eq:Jn-fact},
we also have that for every $\delta\in [0,\frac{1}{3})$,
\begin{align*}
  |g_{n,N}(x)|\leqslant & \frac{1}{n^{2N}}\int_0^{\infty}\frac{r}{r^2+4n^2}|\mathcal{H}_{2n}F_N(r)||J_{2n}(xr)|\dd r\\
  \leqslant & \frac{C_N}{n^{2N}}\int_0^\infty \frac{r^{\frac{2}{3}}n^{-\frac{1}{3}} (xr)^\delta}{r^2+4n^2}\dd r\\
  \leqslant & C_N\frac{x^{\delta}}{n^{2N+\frac{2}{3}-\delta}}\int_0^{\infty}\frac{r^{\frac{2}{3}+\delta}}{r^2+1}\dd r \\
  \leqslant & C_{N,\delta} \frac{x^{\delta}}{n^{2N+\frac{2}{3}-\delta}}\cdot
\end{align*}
Therefore, collecting the above two estimates yields to  the last point of  the proposition as desired.
Hence, the proof is completed.
\end{proof}

\noindent As an immediate  consequence of Proposition \ref{prop:asypt} and \eqref{def:varphi-n}, \eqref{eq:lamb-n}, we have the following results on the asymptotic representation of the universal function $\phi_n$ and the spectrum.
\begin{corollary}\label{cor:asym}
Let $b\in(0,1)$ and $\delta\in [0,\frac{1}{3})$. Then, for any $n\geqslant 1$ and $N\geqslant 0$, the following statements hold true.
\begin{enumerate}[(1)]
\item
We have
\begin{align*}
  \forall x> 0,\quad \phi_n(x)=\sum_{k=0}^{N}\frac{1}{n^{2k+1}}\Psi_k(\tfrac{x}{n})+\tfrac{1}{n}g_{n,N}(\tfrac{x}{n}),
\end{align*}
with
\begin{align*}
  \big|\tfrac{1}{n}g_{n,N}(\tfrac{x}{n})\big|\leqslant \frac{C_{N,\delta}}{n^{2N+\frac{5}{3}}} \frac{x^\delta}{1+\frac{x}{n}}.
\end{align*}
\item We have
\begin{align*}
  \lambda_{n,b} = 2\sum_{k=0}^{N}\frac{1}{n^{2k+1}}\int_0^\infty \Psi_k(\tfrac{b x}{n}) \tfrac{\dd \mu(x)}{x}
  +\varepsilon_{n,N},
\end{align*}
with
\begin{align*}
  |\varepsilon_{n,N}|\leqslant \frac{C_{N,\delta}}{n^{2N+\frac{5}{3}}}\int_0^\infty \frac{x^{\delta-1}}{1+\frac{b x}{n}}
  \dd \mu(x).
\end{align*}
\end{enumerate}
\end{corollary}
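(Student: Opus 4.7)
The corollary is essentially a bookkeeping rescaling of Proposition \ref{prop:asypt} together with the integral representation of $\lambda_{n,b}$ supplied by Lemma \ref{lem:lamb-n}. I do not expect any serious analytic obstacle; the only care required is tracking the powers of $n$ in the remainder.

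For part $(1)$, the starting point is the rescaling relation \eqref{def:varphi-n}, namely $\phi_n(x)=\tfrac{1}{n}\varphi_n(\tfrac{x}{n})$. Evaluating the expansion \eqref{eq:decom-phi-n} of $\varphi_n$ at the argument $\tfrac{x}{n}$ and multiplying by $\tfrac{1}{n}$ yields
\begin{align*}
\phi_n(x)=\sum_{k=0}^{N}\frac{1}{n^{2k+1}}\Psi_k\!\big(\tfrac{x}{n}\big)+\frac{1}{n}g_{n,N}\!\big(\tfrac{x}{n}\big),
\end{align*}
which is the first assertion. For the remainder bound I would substitute $y=\tfrac{x}{n}$ into the estimate \eqref{eq:gnN}, obtaining
\begin{align*}
\Big|\tfrac{1}{n}g_{n,N}\!\big(\tfrac{x}{n}\big)\Big|
\leqslant \frac{1}{n}\cdot\frac{C_{N,\delta}}{n^{2N+\frac{2}{3}-\delta}}\cdot\frac{(x/n)^{\delta}}{1+x/n}
= \frac{C_{N,\delta}}{n^{2N+\frac{5}{3}}}\cdot\frac{x^{\delta}}{1+\frac{x}{n}},
\end{align*}
where the factor $n^{-\delta}$ generated by $(x/n)^{\delta}$ combines exactly with the $n^{\delta}$ in the prefactor, leaving the clean power $n^{-(2N+5/3)}$. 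This is precisely the claimed bound.

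For part $(2)$, I would insert the expansion obtained in $(1)$, applied with argument $bx$, into the integral representation \eqref{eq:lamb-n} of $\lambda_{n,b}$. Since the expansion has only finitely many terms, linearity gives
\begin{align*}
\lambda_{n,b}
=2\sum_{k=0}^{N}\frac{1}{n^{2k+1}}\int_{0}^{\infty}\Psi_k\!\big(\tfrac{bx}{n}\big)\,\frac{\dd\mu(x)}{x}
+\varepsilon_{n,N},\qquad
\varepsilon_{n,N}\triangleq \frac{2}{n}\int_{0}^{\infty}g_{n,N}\!\big(\tfrac{bx}{n}\big)\,\frac{\dd\mu(x)}{x}.
\end{align*}
Applying the pointwise remainder bound from $(1)$ inside the integral gives
\begin{align*}
|\varepsilon_{n,N}|
\leqslant \frac{C_{N,\delta}}{n^{2N+\frac{5}{3}}}\int_{0}^{\infty}\frac{(bx)^{\delta}}{1+\tfrac{bx}{n}}\,\frac{\dd\mu(x)}{x}
\leqslant \frac{C_{N,\delta}'}{n^{2N+\frac{5}{3}}}\int_{0}^{\infty}\frac{x^{\delta-1}}{1+\tfrac{bx}{n}}\,\dd\mu(x),
\end{align*}
with the harmless factor $b^{\delta}$ (bounded since $b\in(0,1)$) absorbed into the constant. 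This is the second claim.

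The only point worth a line of comment is that the exchange of sum and integral and the pointwise use of \eqref{eq:gnN} inside the $\mu$-integral are legitimate: the sum is finite, and the right-hand side of the remainder bound is exactly the integrand controlling $\varepsilon_{n,N}$, so the argument closes whenever $\lambda_{n,b}$ itself is well-defined by \eqref{eq:lamb-n}. No further analytic input is needed.
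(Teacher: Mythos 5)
Your proposal is correct and follows exactly the path the paper intends: the corollary is stated there as an immediate consequence of Proposition \ref{prop:asypt} via the rescaling \eqref{def:varphi-n} and the representation \eqref{eq:lamb-n}, and your power-of-$n$ bookkeeping for the remainder (the exact cancellation of $n^{\pm\delta}$) is the right computation.
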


\subsection{The decay rate of $\phi_n-\phi_{n+1}$}\label{subsec:decay-phi}
Our main goal in  this section is to provide  an explicit lower/upper bound for $\chi_n \triangleq \phi_n-\phi_{n+1}$,
which is useful in handling the perturbative argument employed  in the proof of Theorem \ref{thm:perturbative}.
\\[1mm]
According to the differential equation \eqref{d-ff-2}, the function $\phi_{n+1}$ contributes on the source term, and  thus
we shall need  some pointwise controls for $\phi_{n+1}$ or $\varphi_{n+1}$ in order to estimate $\chi_n$.
We note that, by  choosing $N=0$ in Proposition \ref{prop:asypt}, we obtain
\begin{align*}
  \forall n\geqslant 1,x>0,\quad \varphi_n(x)=\frac{x}{1+x^2}+g_{n,0}(x),
\end{align*}
with $\displaystyle{\lim_{n\to \infty}\lVert g_{n,0}\rVert_{L^{\infty}(\mathbb{R}^+)} =0}$.
Thus, for sufficiently large $n$, $\varphi_n(x)$ remains  close to $\frac{x}{1+x^2}$.
We will see that  by analyzing  carefully the differential equation governing  $\varphi_n(x)-\frac{x}{1+x^2}$,
we can show the following lower/upper bound of $\varphi_n(x)$, which gives a more precise version of that result.
\begin{lemma}\label{lem:bound-varphi}
For every $x>0$ and $n\geqslant 1$, the following inequalities hold true
\begin{align*}
  \frac{4n^2}{4n^2 +1} \frac{x}{1+x^2}\leqslant \varphi_n(x)\leqslant \frac{4n^2}{4n^2 -1} \frac{x}{1+x^2}
\end{align*}
and
\begin{align*}
  \frac{4n^2}{4 n^2 +1} \frac{x}{n^2+x^2} \leqslant \phi_n(x)\leqslant \frac{4n^2}{4n^2 -1} \frac{x}{n^2 + x^2}\cdot
\end{align*}
\end{lemma}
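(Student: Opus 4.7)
My plan is to use the comparison theorem (Lemma 3.1) applied to the scaled ODE \eqref{eq:varphi_n}, rewritten in the standard form
\begin{equation*}
  \varphi_n''(x) + \tfrac{1}{x}\varphi_n'(x) - 4n^2\bigl(1+\tfrac{1}{x^2}\bigr)\varphi_n(x) = -\tfrac{4n^2}{x},
\end{equation*}
so that $a(x)=1/x>0$ and $b(x)=4n^2(1+1/x^2)>0$, as required in Lemma \ref{lem:comparison}. For each choice of constant $c>0$, I would set $g(x)=c\,\Psi_0(x)=\frac{cx}{1+x^2}$ and consider the defect $h=g-\varphi_n$ (for the upper bound) or $h=\varphi_n-g$ (for the lower bound). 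The boundary data are immediate: $\varphi_n(0)=n\phi_n(0)=0$ and $\lim_{x\to\infty}\varphi_n(x)=\lim_{x\to\infty}n\phi_n(nx)=0$ by \eqref{eq:phi-n0}, while $g(0)=0$ and $g(\infty)=0$, so $h(0)=0$ and $h(\infty)=0$ in both cases.

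The heart of the argument is a direct computation of $\mathbf{L}_n g \triangleq g''+\frac{1}{x}g'-4n^2(1+\tfrac{1}{x^2})g$. A short calculation gives
\begin{equation*}
  g''(x)+\tfrac{1}{x}g'(x) = c\,\tfrac{x^4-6x^2+1}{x(1+x^2)^3},\qquad
  4n^2\bigl(1+\tfrac{1}{x^2}\bigr)g(x)=\tfrac{4n^2 c}{x}.
\end{equation*}
For $c_+ = \tfrac{4n^2}{4n^2-1}$ one has $4n^2(c_+-1)=c_+$, so subtracting from the equation for $\varphi_n$ yields
\begin{equation*}
  \mathbf{L}_n(\varphi_n-g)(x) = \tfrac{c_+}{x(1+x^2)^3}\bigl[(1+x^2)^3-(x^4-6x^2+1)\bigr]
  = \tfrac{c_+\,x(x^4+2x^2+9)}{(1+x^2)^3}>0,
\end{equation*}
so the comparison theorem applied to $h=g_+ - \varphi_n$ gives $\varphi_n\leqslant g_+$. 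Analogously for $c_- = \tfrac{4n^2}{4n^2+1}$, one has $4n^2(c_--1)=-c_-$, and the same algebraic manipulation yields
\begin{equation*}
  \mathbf{L}_n(\varphi_n-g_-)(x) = -\tfrac{c_-}{x(1+x^2)^3}\bigl[(1+x^2)^3+(x^4-6x^2+1)\bigr]
  = -\tfrac{c_-(x^6+4x^4-3x^2+2)}{x(1+x^2)^3}<0,
\end{equation*}
so the comparison principle applied to $h=\varphi_n-g_-$ gives $\varphi_n\geqslant g_-$.

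The only non-routine point is checking the sign of the polynomial $x^6+4x^4-3x^2+2$. Setting $u=x^2\geqslant 0$, the function $u^3+4u^2-3u+2$ has derivative $3u^2+8u-3$, whose unique positive root is $u=1/3$; evaluation there gives $40/27>0$, so the polynomial is strictly positive on $[0,\infty)$. I expect this sign verification, together with the correct bookkeeping of the constants $c_\pm$ that make the $1/x$ singular term cancel against $-4n^2/x$, to be the main (mild) obstacle; everything else is boundary-condition checking and invocation of Lemma \ref{lem:comparison}.

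Finally, the bounds on $\phi_n$ follow at once from the relation $\phi_n(x)=\tfrac{1}{n}\varphi_n(x/n)$ in \eqref{def:varphi-n}: substituting $y=x/n$ and multiplying by $1/n$ transforms $\tfrac{y}{1+y^2}$ into $\tfrac{x}{n^2+x^2}$, yielding the stated bounds for $\phi_n(x)$.
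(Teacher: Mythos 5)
Your proof is correct and follows essentially the same strategy as the paper: subtract $c\,\Psi_0$ from $\varphi_n$ with $c=c_\pm=\tfrac{4n^2}{4n^2\mp1}$, compute the ODE residual, verify its sign, and apply Lemma \ref{lem:comparison} with the boundary data from \eqref{eq:phi-n0}. The only stylistic difference is that you verify the sign by explicitly factoring $(1+x^2)^3\mp(x^4-6x^2+1)$ and checking the resulting polynomial, whereas the paper bounds the ratio $\tfrac{(x^2-3)^2-8}{(1+x^2)^3}$ between $-1$ and $1$ and deduces the constraint on $c$; these are two routes to the same algebraic verification.
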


\begin{remark}
One may expect that there exist some constants $c_1,c_2>0$ such that at least for sufficiently large $n$,
\begin{align*}
  \frac{c_1}{n^2}\Psi_1(x)\leqslant \varphi_n(x)-\frac{x}{1+x^2}\leqslant \frac{c_2}{n^2}\Psi_1(x),\quad x>0.
\end{align*}
However, numerical experiments indicate that such inequalities do not hold even for very small $c_1$ or very large $c_2$.
On the other hand, if we define
\begin{align*}
  w_n(x) \triangleq \varphi_n(x)-\frac{x}{1+x^2}-\frac{c}{n^2}\Psi_1(x),\quad c>0,
\end{align*}
we have
\begin{align*}
  \tfrac{1}{n^2}\big(w_n''(x)+\tfrac{1}{x}w_n'(x)\big) - 4(1+\tfrac{1}{x^2})w_n(x)
  =\tfrac{c-1}{n^2}\tfrac{(x^2-3)^2-8}{(1+x^2)^3x} - \tfrac{c}{n^4}\big( \Psi_1''(x)+\tfrac{1}{x}\Psi_1'(x)\big).
\end{align*}
The leading term on the right-hand side
$\frac{c-1}{n^2} \frac{(x^2-3)^2-8}{(1+x^2)^3x}= \frac{4(c-1)}{n^2} \big(1+\tfrac{1}{x^2} \big)\Psi_1(x)$
does not have a definite sign, so that the comparision test seen in Lemma \ref{lem:comparison}  does not apply in this context.
\end{remark}

\begin{proof}[Proof of Lemma \ref{lem:bound-varphi}]
Define
\begin{align*}
  f_n(x) \triangleq \varphi_n(x)-\frac{c\, x}{1+x^2},
\end{align*}
where $c>0$ is a constant that will be chosen later according to $n$.
From \eqref{eq:varphi_n} and \eqref{eq:phi-n0} we deduce by straightforward computations that
\begin{align*}
  \frac{1}{n^2}\Big(f_n''(x)+\tfrac{1}{x}f'_n(x)\Big) - 4\Big(1+\tfrac{1}{x^2}\Big) f_n
  = -(1-c) \frac{4}{x} - \frac{c}{n^2} \frac{(x^2-3)^2-8}{(1+x^2)^3x},
\end{align*}
with
\begin{align*}
  f_n(0)=0,\quad 
  \quad \textrm{and}\quad \lim_{x\to \infty}f_n(x)=0.
\end{align*}
Next, we shall use the following bounds
\begin{align*}
  \forall x\geqslant 0,\quad \quad -1\leqslant-6\frac{x^2}{(1+x^2)^3}\leqslant\frac{(x^2-3)^2-8}{(1+x^2)^3}\leqslant \frac{x^4+1}{(1+x^2)^2}\leqslant 1
\end{align*}
leading to
\begin{align*}
  -\frac{c}{n^2}\frac{1}{x} \leqslant -\frac{c}{n^2}\frac{(x^2-3)^2-8}{(1+x^2)^3x}\leqslant \frac{ c}{n^2} \frac{1}{x}
\end{align*}
and
\begin{align*}
  \big(c-1 - \tfrac{c}{4n^2} \big) \frac{4}{x}  \leqslant -(1-c) \frac{4}{x} - \frac{c}{n^2} \frac{(x^2-3)^2-8}{(1+x^2)^3x}
  \leqslant -\big(1- c - \tfrac{c}{4n^2} \big) \frac{4}{x}\cdot
\end{align*}
Hence, by choosing $c=\frac{4n^2}{4n^2 +1}$ and $c=\frac{4 n^2 }{4n^2 -1}$  and applying Lemma \ref{lem:comparison},
we obtain the lower and upper bounds for $\varphi_n(x)$, respectively.
Combined with the relation \eqref{def:varphi-n}, it gives the required lower/upper bounds for  $\phi_n(x)$.
\end{proof}

\noindent The next goal is to estimate the difference $\phi_n -\phi_{n+1}$ that will be used later to explore the spectrum distribution.
\begin{proposition}\label{prop:chi-n}
For every $x>0$ and $n\geqslant 1$, we have
\begin{equation}\label{eq:bound-diff-phi-n}
\begin{aligned}
  \frac{1}{2}\frac{(2n+1)x}{(n^2+x^2) \big((n+1)^2+x^2\big)}\leqslant \phi_{n}(x) - \phi_{n+1}(x)\leqslant
  4 \frac{(2n+1)x}{(n^2+x^2) \big((n+1)^2+x^2\big)}\cdot
\end{aligned}
\end{equation}
\end{proposition}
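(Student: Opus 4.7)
The plan is to prove both inequalities via the comparison principle of Lemma \ref{lem:comparison} applied to the ODE \eqref{d-ff-2} for $\chi_n = \phi_n - \phi_{n+1}$, using as barrier the natural candidate
\[
\psi_n(x) \triangleq g_n(x) - g_{n+1}(x) = \frac{(2n+1)x}{(n^2+x^2)((n+1)^2+x^2)}, \qquad g_\alpha(x) \triangleq \frac{x}{\alpha^2+x^2}.
\]
The motivation is Lemma \ref{lem:bound-varphi}, which tells us $\phi_n \approx g_n$ up to $O(1/n^2)$, so that $\chi_n \approx \psi_n$ to leading order; the outer constants $1/2$ and $4$ are generous factors chosen to absorb both the Lemma \ref{lem:bound-varphi} discrepancies and the curvature terms that appear below.

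First I compute the action of the operator $\mathcal{M}_n h \triangleq h'' + x^{-1}h' - 4(1+n^2/x^2)h$ appearing in \eqref{d-ff-2} on $\psi_n$. Using the identity $4(1+\alpha^2/x^2)g_\alpha(x) = 4/x$ together with the elementary equality $4(1+n^2/x^2) = 4(1+(n+1)^2/x^2) - 4(2n+1)/x^2$, I obtain
\[
\mathcal{M}_n \psi_n(x) = \bigl(S_n(x) - S_{n+1}(x)\bigr) - \frac{4(2n+1)}{x((n+1)^2+x^2)},
\]
where $S_\alpha(x) \triangleq g_\alpha''(x) + x^{-1}g_\alpha'(x)$. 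Combining this with the Lemma \ref{lem:bound-varphi} sandwich $\frac{4(n+1)^2}{4(n+1)^2+1}g_{n+1} \leqslant \phi_{n+1} \leqslant \frac{4(n+1)^2}{4(n+1)^2-1}g_{n+1}$ and with $\mathcal{M}_n \chi_n = -\frac{4(2n+1)}{x^2}\phi_{n+1}$, each of the target inequalities $\chi_n \leqslant 4\psi_n$ and $\chi_n \geqslant \tfrac{1}{2}\psi_n$ reduces to an explicit one-sided pointwise estimate of $S_n - S_{n+1}$ against a suitable multiple of $\frac{(2n+1)}{x((n+1)^2+x^2)}$ (the constants on the right-hand side tend to $3$ and $-4$ respectively as $n\to\infty$).

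To control $S_n - S_{n+1}$ I rely on the algebraic identity
\[
S_\alpha(x) = \frac{1}{x(\alpha^2+x^2)} - \frac{8\alpha^2 x}{(\alpha^2+x^2)^3},
\]
which produces the clean decomposition $S_n(x) - S_{n+1}(x) = \frac{2n+1}{x(n^2+x^2)((n+1)^2+x^2)} - 8x\bigl(T_n(x) - T_{n+1}(x)\bigr)$ with $T_\alpha(x) \triangleq \frac{\alpha^2}{(\alpha^2+x^2)^3}$. A case analysis of the rational function $\alpha \mapsto T_\alpha(x)$ on the interval $[n,n+1]$ (according to the sign of $T'_\alpha$, which vanishes at $\alpha = x/\sqrt{2}$) then yields the required two-sided bound in all three regimes $x\lesssim n$, $n\lesssim x\lesssim n+1$, and $x\gtrsim n+1$. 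Together with the boundary conditions $\chi_n(0) = \chi_n(\infty) = \psi_n(0) = \psi_n(\infty) = 0$ (inherited from \eqref{eq:phi-n0}), applying Lemma \ref{lem:comparison} to $4\psi_n - \chi_n$ and to $\chi_n - \tfrac{1}{2}\psi_n$ closes the argument.

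The main technical obstacle is this last estimate on $S_n - S_{n+1}$: a naive mean-value bound of the form $|S_n - S_{n+1}| \lesssim (n+1)/(x(n^2+x^2)^2)$ is too weak near $x = 0$, where each $S_\alpha(x)\sim 1/(x\alpha^2)$ blows up. The exact cancellation of these singular parts encoded in the decomposition of $S_\alpha$ displayed above is what lets the generous factors $1/2$ and $4$ work uniformly in $n \geqslant 1$, and tracking it carefully will be the computational heart of the proof.
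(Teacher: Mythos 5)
Your proposal is correct, and it reaches the same two-sided bound by a genuinely different route. The paper's proof introduces an auxiliary intermediate barrier $r_n$, defined as the Hankel-transform solution of the companion problem \eqref{eq:rn}--\eqref{eq:rn-bdr} obtained by replacing $\phi_{n+1}$ in the source of \eqref{d-ff-2} with its closed-form surrogate $\tfrac{x}{(n+1)^2+x^2}$, and then applies Lemma~\ref{lem:comparison} twice: once to sandwich $\chi_n$ between constant multiples of $r_n$, and once more, after the rescaling $R_n(x)=n\,r_n(nx)$, to sandwich $R_n$ between constant multiples of $H_n$ via explicit bounds on $\mathbf{L}_0 H_n$. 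You bypass $r_n$ entirely and take as barrier the closed-form $\psi_n = g_n - g_{n+1}$ (which is exactly $H_n$ in the unscaled variable), comparing $\chi_n$ directly against $\tfrac12\psi_n$ and $4\psi_n$. The price is that the Lemma~\ref{lem:bound-varphi} sandwich on $\phi_{n+1}$ and the curvature defect $S_\alpha = g_\alpha''+x^{-1}g_\alpha'$ of the barrier must be handled in a single step; your identity $S_\alpha(x)=\tfrac{1}{x(\alpha^2+x^2)}-\tfrac{8\alpha^2 x}{(\alpha^2+x^2)^3}$ and the ensuing case analysis on $T_\alpha=\alpha^2/(\alpha^2+x^2)^3$ do exactly that. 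I checked the algebra: the identity for $S_\alpha$ is correct, the reduction to one-sided bounds on $S_n-S_{n+1}$ with the quoted limiting constants $3$ and $-4$ comes out as you say, and the tightest constraint (at $n=1$) clears with room to spare once $T_n-T_{n+1}$ is controlled via the integral representation $\int_{n^2}^{(n+1)^2}\frac{2t-x^2}{(t+x^2)^4}\,\dd t$ rather than a crude mean-value estimate. Net effect: you trade the Hankel-transform construction of $r_n$ and the rescaling step for one somewhat heavier pointwise computation; both versions ultimately rest on Lemma~\ref{lem:comparison} and Lemma~\ref{lem:bound-varphi} and are comparable in total effort, with yours slightly more elementary in its toolbox.
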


\begin{proof}[Proof of Proposition $ \ref{prop:chi-n}$]
We shall first prove the following result: for any $x>0$, $n\geqslant 1$, we have
\begin{align}\label{low-up bound}
  \frac{4(n+1)^2}{4(n+1)^2 +1} r_n(x) \leqslant \phi_{n}(x) - \phi_{n+1}(x) \leqslant \frac{4(n+1)^2}{4(n+1)^2 -1}  r_n(x),
\end{align}
where $r_n(x)$ is a  solution to the equation
\begin{align}\label{eq:rn}
  r_n''(x)+ \tfrac{1}{x} r'_n(x)-4(1+\tfrac{n^2}{x^2})r_n(x)=-\tfrac{4(2n+1)}{x\big((n+1)^2+x^2\big)},
\end{align}
supplemented with the boundary conditions
\begin{align}\label{eq:rn-bdr}
  r_n(0)=0,\quad\textrm{and}\quad \lim_{x\rightarrow \infty} r_n(x) = 0.
\end{align}
The construction of  $r_n$ can be done using Hankel transform. Indeed, applying \eqref{hankel:diff} to \eqref{eq:rn} yields
\begin{align*}
  -s^2\mathcal{H}_{2n}r_n(s)-4\mathcal{H}_{2n}r_n(s)=-\mathcal{H}_{2n}\Big(\tfrac{4(2n+1)}{x\big((n+1)^2+x^2\big)}\Big).
\end{align*}
Thus,
\begin{align*}
  \mathcal{H}_{2n}r_n(s)=\frac{1}{s^2+4}\mathcal{H}_{2n}\Big(\tfrac{4(2n+1)}{x\big((n+1)^2+x^2\big)}\Big)(s).
\end{align*}
Then in view of \eqref{hankel:inverse}, we find
\begin{align}\label{eq:new_r_n}
  r_n(x)=\mathcal{H}_{2n}\bigg(\frac{1}{s^2+4}\mathcal{H}_{2n}\Big(\tfrac{4(2n+1)}{x\big((n+1)^2+x^2\big)}\Big)(s)\bigg)(x).
\end{align}
By the definition of $\mathcal{H}_{2n}$ in  \eqref{def:HankelTrans} and arguing as for getting  the estimate \eqref{es:H2nFN} we get \eqref{eq:rn-bdr}.
\\[1mm]
Now we define $h_n(x) \triangleq \phi_{n}(x) - \phi_{n+1}(x)  - \frac{4(n+1)^2}{4(n+1)^2 +1} r_n(x)$ and
\begin{align*}
  \mathbf{T}_n f(x)\triangleq f''(x) + \tfrac{1}{x}f'(x) - 4\big(1+\tfrac{n^2}{x^2}\big)f(x) .
\end{align*}
Then thanks to \eqref{d-ff-2}, $h_n(x)$ satisfies
\begin{align*}
  \mathbf{T}_nh_n(x) 
  =-4\frac{2n+1}{x^2}\phi_{n+1}(x)- \frac{4(n+1)^2}{(n+1)^2 +1} \mathbf{T}_nr_n(x),
\end{align*}
with
\begin{align*}
  h_n(0)=0, \quad\textrm{and}\quad \lim_{x\rightarrow \infty} h_n(x) = 0.
\end{align*}
Lemma \ref{lem:bound-varphi} ensures that
\begin{align*}
  - 4 \frac{2n+1}{x^2} \phi_{n+1}(x) \leqslant - \frac{4(n+1)^2}{4(n+1)^2 +1} \frac{4(2n+1)}{x \big( (n+1)^2+x^2\big)}\cdot
\end{align*}
Thus
\begin{align*}
  -4\frac{2n+1}{x^2}\phi_{n+1}(x) - \frac{4(n+1)^2}{4(n+1)^2 +1} \mathbf{T}_nr_n(x) \leqslant 0.
\end{align*}
Taking advantage of Lemma \ref{lem:comparison}, we find that $h_n(x)\geqslant 0$ for every $x>0$ and $n\geqslant 1$,
which leads to the desired inequality
\begin{align*}
  \phi_{n}(x) - \phi_{n+1}(x) \geqslant \frac{4(n+1)^2}{4(n+1)^2 +1} r_n(x).
\end{align*}
Performing a similar argument, that we shall omit here,  one can prove the other inequality
\begin{align*}
  \phi_{n}(x) - \phi_{n+1}(x) \leqslant \frac{4(n+1)^2}{4(n+1)^2 -1} r_n(x).
\end{align*}
This achieves  the proof of \eqref{low-up bound}.
\\[1mm]
Next we shall investigate some lower and upper bound for $r_n$. We shall first deal with the following rescaled function
$R_n(x)\triangleq n\, r_n(nx)$, which satisfies the equation, see \eqref{def:Lf},
\begin{align*}
  \mathbf{L}R_n(x)= \frac{1}{n^2}\big(R_n''(x)+ \tfrac{1}{x} R_n'(x)\big) - 4\big(1+\tfrac{1}{x^2}\big) R_n(x)
  = -\tfrac{4(2n+1)}{x\big((n+1)^2+n^2x^2\big)}\cdot
\end{align*}
Our primary goal is to derive the pointwise lower/upper bound of $R_n(x)$. To this end, we define
\begin{align*}
  H_n(x) \triangleq \frac{x}{1+x^2} \frac{2n+1}{(n+1)^2+n^2x^2 }\cdot
\end{align*}
We plan to show the following result
\begin{align}\label{Es-R-H}
  \frac{16}{25} H_n(x) \leqslant R_n(x) \leqslant \frac{8}{3} H_n(x),
\end{align}
which implies in turn that
\begin{align*}
  \frac{16}{25}  \frac{x}{n^2+x^2} \frac{2n+1}{(n+1)^2+x^2}
  \leqslant r_n(x) \leqslant \frac{8}{3}  \frac{x}{n^2+x^2} \frac{2n+1}{(n+1)^2+x^2}\cdot
\end{align*}
Then combining this estimate with \eqref{low-up bound} gives  the desired result of Proposition \ref{prop:chi-n}.
\\[1mm]
Now, let us move to the proof of \eqref{Es-R-H}.
Set
\begin{align*}
  \widetilde{f}_n(x) \triangleq R_n(x)-c\,H_n(x),
\end{align*}
with some constant $c>0$ that will be carefully chosen later.
Then, direct computations, using the notation \eqref{def:L0f}, imply
\begin{align*}
  \mathbf{L}\widetilde{f}_n(x)=-(1-c)\frac{4(2n+1)}{x \big((n+1)^2+n^2x^2\big)} - c\frac{1}{n^2} \mathbf{L}_0 H_n(x),
\end{align*}
and we note that
\begin{align*}
  \widetilde{f}_n (0) = 0, \quad \textrm{and}\quad \lim_{x\rightarrow \infty} \widetilde{f}_n(x) = 0.
\end{align*}
According to Lemma \ref{lem:comparison}, in order to obtain that $\widetilde{f}_n(x)\geqslant 0$ or $\widetilde{f}_n (x) \leqslant 0$,
we only need to let the right-hand side of above equation be non-positive or non-negative.
Next, we plan to compute
\begin{align*}
  \frac{1}{n^2}\mathbf{L}_0 H_n(x) = \frac{1}{n^2} \big(H_n''(x) + \frac{1}{x} H_n'(x) \big).
\end{align*}
From straightforward computations we get  for every $x>0$ and $n\geqslant 1$,
\begin{align}\label{Hprime}
  \frac{1}{n^2} H_n'(x) = &  \frac{1}{n^2}\frac{1- x^2}{(1+x^2)^2} \frac{2n+1}{(n+1)^2+n^2x^2 }
  - \frac{x}{1+x^2} \frac{(2n+1)\, 2x}{\big((n+1)^2+n^2x^2\big)^2} \\
  \nonumber \geqslant & - \frac{2n + 1}{(n+1)^2 + n^2 x^2} \bigg( \frac{x^2}{(1+x^2)^2} + \frac{2x^2}{(1+x^2)}
  \frac{1}{(n+1)^2 + n^2 x^2}\bigg) \\
 \nonumber \geqslant & - \frac{3}{4}\,\frac{2n + 1}{(n+1)^2 + n^2 x^2}\cdot
\end{align}
Direct computations yield
\begin{align}\label{H-second}
  \frac{1}{n^2}H_n''(x) = & \frac{1}{n^2} \frac{(-2x)(3-x^2)}{(1+x^2)^3}\frac{2n+1}{(n+1)^2+n^2x^2}
  - \frac{1-x^2}{(1+x^2)^2}\frac{(2n+1)2x}{\big((n+1)^2+n^2x^2\big)^2} \\
  \nonumber& - \frac{x}{(1+x^2)^2}\frac{4(2n+1)}{\big((n+1)^2+n^2x^2\big)^2}
  + \frac{x^3}{1+x^2}\frac{2(2n+1)4 n^2}{\big((n+1)^2+n^2x^2\big)^3}.
\end{align}
It follows that
\begin{align*}
  \frac{1}{n^2}H_n''(x)
  \geqslant & - \frac{4(2n+1)}{x \big((n+1)^2 + n^2 x^2\big)}
  \bigg( \frac{3 x^2 }{2 n^2(1+x^2)^3}  + \frac{ 3x^2 }{2(1+x^2)^2} \frac{1}{(n+1)^2 + n^2 x^2} \bigg) \\
  \geqslant &  -\frac{3}{8} \frac{4(2n+1)}{x\big((n+1)^2 + n^2 x^2\big)} ,
\end{align*}
where  we have used the inequalities $\frac{x^2}{(1+x^2)^2} \leqslant \frac{1}{4}$
and $\frac{ x^2}{(1+x^2)^3} \leqslant \frac{4}{27}$.
Hence we find
\begin{align*}
  \mathbf{L} \widetilde{f}_n(x) \leqslant \frac{4(2 n+1)}{x\big((n+1)^2 + n^2 x^2\big)} \big( -(1-c) + \tfrac{9c}{16}\big),
\end{align*}
and choosing $c =  \frac{16}{25}$ gives $\mathbf{L}\widetilde{f}_n \leqslant 0 $.
Then, Lemma \ref{lem:comparison} implies that $\widetilde{f}_n(x) \geqslant 0$,
that is,
\begin{align*}
  \forall\, x\geqslant 0,\quad R_n(x)\geqslant \frac{16}{25} H_n(x).
\end{align*}
Now, we move to the proof  of the second estimate of \eqref{Es-R-H}. First, we observe from \eqref{Hprime} that
\begin{align*}
  \frac{1}{n^2} H_n'(x) \leqslant  \frac{1}{4}\,\frac{4(2n + 1)}{(n+1)^2 + n^2 x^2}\cdot
\end{align*}
In addition, we deduce from \eqref{H-second}
\begin{align*}
  \frac{1}{n^2} H_n''(x) & \leqslant \frac{4(2n+1)}{x \big((n+1)^2 + n^2 x^2\big)}
  \bigg( \frac{ x^4 }{2 n^2(1+x^2)^3} +  \frac{ x^4 }{2(1+x^2)^2} \frac{1}{(n+1)^2 + n^2 x^2}\bigg) \\
  & \quad + \frac{4(2n+1)}{x \big((n+1)^2 + n^2 x^2\big)}
  \times \frac{ 2 x^2 }{1+x^2} \frac{n^2 x^2}{\big((n+1)^2 + n^2 x^2\big)^2} \\
  & \leqslant \frac{3}{8} \frac{4(2n+1)}{x \big((n+1)^2 + n^2 x^2\big)} ,
\end{align*}
where we have used the fact that $\frac{x^4}{(1+ x^2)^3} \leqslant \frac{4}{27}$ and
$\frac{n^2 x^2}{((n+1)^2 + n^2 x^2)^2} \leqslant \frac{1}{16}$.
Then we have
\begin{align*}
  \mathbf{L}\widetilde{f}_n(x) \geqslant  \frac{4(2 n+1)}{x\big((n+1)^2 + n^2 x^2\big)}
  \big( - (1-c) - \tfrac{5c}{8} \big).
\end{align*}
Choosing $c= \frac{8}{3}$ guarantees that $\mathbf{L}\widetilde{f}_n \geqslant0$.
Therefore, we conclude in view of Lemma \ref{lem:comparison} that
$\widetilde{f}_n(x) \leqslant 0$, that is,
\begin{align*}
  R_n(x)\leqslant \frac{8}{3} H_n(x),
\end{align*}
as stated in \eqref{Es-R-H}. This achieves the proof of Proposition \ref{prop:chi-n}.
\end{proof}

\subsection{Spectrum convexity}\label{subsec:lam-conv}
In the forthcoming lemma, we intend  to discuss a  result concerning the convexity of spectrum $(\lambda_{n,b})_{n\in\mathbb{N}^\star}$
associated with a class of measures $\mu$ with suitable densities. Our result reads as follows.
\begin{lemma}\label{lem:convex-lem}
  Let $\lambda_{n,b}$ be given by \eqref{eq:lamb-n} with $\dd \mu(x)=x f(x)\dd x$,
$f(x)\geqslant 0$ and $f\in C^2(\R^+)$.
If there exists some constant $C>0$ such that the following conditions hold
\begin{enumerate}[(1)]
\item $\displaystyle{\limsup_{x\to 0^+}x |f(x)| +\limsup_{x\to 0^+}x^2 |f'(x)|\leqslant C}$,
\item $\displaystyle{\lim_{x\to +\infty}f(x)=0}$ and $\displaystyle{\lim_{x\to +\infty}x f'(x)=0}$,
\item $\forall x>0,\quad f''(x)\geqslant 0$,
\end{enumerate}
then we have
\begin{align*}
  \forall\, n\geqslant 2,\quad \lambda_{n+1,b}+\lambda_{n-1,b}-2\lambda_{n,b}\geqslant 0.
\end{align*}
\end{lemma}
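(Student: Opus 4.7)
The plan is to rewrite the second finite difference of $\phi_n$ as a pointwise second derivative, transfer that derivative onto $f$ by two integrations by parts, and conclude via the pointwise upper bound on $\phi_n$ from Lemma~\ref{lem:bound-varphi}. The starting point is the trigonometric identity
\begin{equation*}
\cos(2(n+1)\eta)+\cos(2(n-1)\eta)-2\cos(2n\eta) = -4\sin^2\eta \cos(2n\eta),
\end{equation*}
which, inserted into the integral \eqref{def:phi-n} and combined with $\partial_y^2 e^{-2y\sin\eta} = 4\sin^2\eta\, e^{-2y\sin\eta}$, yields the key identity $\phi_{n+1}(y)+\phi_{n-1}(y)-2\phi_n(y) = -\phi_n''(y)$. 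Together with \eqref{eq:lamb-n} and the hypothesis $\dd\mu(x) = x f(x)\dd x$, this gives
\begin{equation*}
\lambda_{n+1,b}+\lambda_{n-1,b}-2\lambda_{n,b} = -2\int_0^\infty \phi_n''(b x)\,f(x)\,\dd x.
\end{equation*}

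Next I would exploit the convexity of $f$. Conditions (2) and (3) force $f'(\infty)=0$ (since $f''\geqslant 0$ makes $f'$ monotone, and a negative limit would drive $f$ to $-\infty$), so a double use of the fundamental theorem of calculus produces the representation
\begin{equation*}
f(x) = \int_x^\infty (s-x)\,f''(s)\,\dd s,\qquad x>0.
\end{equation*}
Plugging this into the formula above, exchanging the integration order via Fubini--Tonelli, changing variable $y=bx$ in the inner integral, and integrating by parts (using $\phi_n(0)=0$), I reach
\begin{equation*}
\lambda_{n+1,b}+\lambda_{n-1,b}-2\lambda_{n,b} = \frac{2}{b^2}\int_0^\infty f''(s)\bigl(b s\,\phi_n'(0)-\phi_n(b s)\bigr)\dd s,
\end{equation*}
where an elementary product-to-sum calculation gives $\phi_n'(0) = -2\int_0^\pi \sin\eta\cos(2n\eta)\,\dd\eta = \tfrac{4}{4n^2-1}$.

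To finish, I invoke the upper bound from Lemma~\ref{lem:bound-varphi}, which shows that for every $T\geqslant 0$ and $n\geqslant 1$,
\begin{equation*}
\phi_n(T)\leqslant \frac{4n^2}{4n^2-1}\,\frac{T}{n^2+T^2}\leqslant \frac{4T}{4n^2-1} = T\phi_n'(0),
\end{equation*}
so the integrand on the right is non-negative; combined with $f''\geqslant 0$ from (3), this yields $\lambda_{n+1,b}+\lambda_{n-1,b}-2\lambda_{n,b}\geqslant 0$ as required. The main technical obstacle I anticipate is the rigorous justification of the Fubini exchange and of the boundary terms at the endpoint $x=0$, where $f$ is allowed to have a $1/x$ singularity: this should be handled by first restricting to $[\varepsilon, M]$ and then letting $\varepsilon\to 0$, $M\to\infty$, using condition~(1) on the controlled singularity of $f, f'$ at the origin together with the vanishing $\phi_n''(0)=0$ valid for $n\geqslant 2$ (obtained by direct computation from \eqref{def:phi-n}), which provides $|\phi_n''(b x)|\lesssim bx$ near the origin and renders the double integral absolutely convergent.
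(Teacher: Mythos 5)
Your proof is correct, and the backbone is the same as the paper's: you use the identity $\phi_{n+1}+\phi_{n-1}-2\phi_n=-\phi_n''$, transfer two derivatives from $\phi_n$ onto $f$, and reduce the claim to the positivity of $\phi_n'(0)\,T-\phi_n(T)$. The two small deviations are both legitimate. First, you implement the transfer of derivatives via the Taylor--with--integral--remainder representation $f(x)=\int_x^\infty(s-x)f''(s)\,\dd s$ followed by Fubini--Tonelli, whereas the paper performs two direct integrations by parts on $\int_0^\infty\bigl(\phi_n(bx)-\phi_n'(0)bx\bigr)''f(x)\,\dd x$; these are algebraically the same manipulation, and your version has the mild virtue that the inner integrand $(s-x)f''(s)\geqslant0$ makes the Fubini step transparent once the overall integral is seen to converge. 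Second, and more substantively, you establish the key inequality $\phi_n(T)\leqslant T\phi_n'(0)=\tfrac{4T}{4n^2-1}$ simply by citing the upper bound of Lemma~\ref{lem:bound-varphi}, $\phi_n(T)\leqslant\tfrac{4n^2}{4n^2-1}\tfrac{T}{n^2+T^2}\leqslant\tfrac{4T}{4n^2-1}$, whereas the paper reproves this inequality from scratch by applying the comparison principle of Lemma~\ref{lem:comparison} to the auxiliary function $\widetilde h_n(x)=\tfrac{4x}{4n^2-1}-\phi_n(x)$. Your route is slightly more economical because it reuses an existing estimate rather than rerunning the comparison argument (which is in fact how Lemma~\ref{lem:bound-varphi} was itself obtained). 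Your final remark on the truncation to $[\varepsilon,M]$ and the role of condition~(1) together with $\phi_n''(0)=0$ for $n\geqslant2$ correctly isolates the technical point that both proofs treat lightly.
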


\begin{proof}[Proof of Lemma \ref{lem:convex-lem}]
First, recalling that $\phi_n$ is defined by \eqref{def:phi-n} and using the fact that
\begin{align*}
  \forall \eta\in\R,\quad -e^{i2\eta} + 2-e^{-i2\eta} = 4\sin^2 \eta,
\end{align*}
we get  the following identity,
\begin{align}\label{eq:convex-id-phi-n}
 \forall n\geqslant 2,\quad  \frac{\dd^2 \phi_n(x)}{\dd x^2}=-\phi_{n+1}(x) + 2\phi_n(x) - \phi_{n-1}(x).
\end{align}
Combining together \eqref{eq:lamb-n} with  \eqref{eq:convex-id-phi-n} allows to get
\begin{align*}
  \lambda_{n+1,b}+\lambda_{n-1,b}-2\lambda_{n,b} = & 2 \int_0^{\infty} \big(\phi_{n+1}(bx) + \phi_{n-1}(bx)
  - 2 \phi_n(b x) \big) f(x) \dd x \\
  = & - 2 \int_0^{\infty}\phi_n''(b x)f(x)\dd x \\
  = & - \frac{2}{b^2} \int_0^{\infty} \big(\phi_n(b x) - \phi_n'(0)b x \big)''f(x) \dd x.
\end{align*}
Integration by parts, using the above assumptions on $f$ and the fact that
$|\phi_n'(x)| \leqslant C$,
\begin{align*}
  \forall n\geqslant 2,\quad \phi''_n(0)=&\int_0^{\pi}4(\sin \eta)^2e^{i2n\eta}\dd \eta
  = 2\int_0^{\pi}(1-\cos 2\eta)\cos (2n\eta) \dd \eta  = 0,
\end{align*}
we obtain
\begin{align*}
  \lambda_{n+1,b}+\lambda_{n-1,b}-2\lambda_{n,b}
  =&\, \frac{2}{b^2}\int_0^{\infty}\big(\phi_n(bx)-\phi_n'(0)bx \big)' f'(x) \dd x \\
  =&- \frac{2}{b^2} \int_0^{\infty} \big(\phi_n(bx)-\phi_n'(0) b x \big) f''(x)\dd x.
\end{align*}
Now, define  the function
\begin{align*}
  \widetilde{h}_n(x)\triangleq \phi_n'(0)x -\phi_n(x) = \frac{4x }{4n^2-1} -\phi_n(x).
\end{align*}
We intend to  prove $\widetilde{h}_n(x)\geqslant 0$ for any $x\geqslant 0$,
which implies in turn the desired result of \mbox{Lemma \ref{lem:convex-lem}.}
By straightforward computations we find
\begin{align*}
  \widetilde{h}_n''(x) + \tfrac{1}{x} \widetilde{h}_n'(x) - 4 \big(1+\tfrac{n^2}{x^2}\big) \widetilde{h}_n(x)
  = - \tfrac{16x}{4n^2-1} \leqslant 0,
\end{align*}
and
\begin{align*}
  \widetilde{h}_n(0) = \widetilde{h}_n'(0)=0,\quad \textrm{and}\quad \quad \lim_{x\to \infty}\widetilde{h}_n(x)= \infty.
\end{align*}
Applying Lemma \ref{lem:comparison} implies
\begin{align*}
  \forall\, x\geqslant 0,\qquad \widetilde{h}_n(x) \geqslant 0.
\end{align*}
This concludes the proof of the positivity of $\widetilde{h}_n(x)$ and achieves the desired result.
\end{proof}

\section{Proof of the main theorems}\label{sec:main}

In this section, we will apply Crandall-Rabinowitz's theorem to prove the existence of time-periodic solution
for the active scalar equation \eqref{eq:ASE}-\eqref{eq:psi}.
We consider the kernel $K(\mathbf{x},\mathbf{y}) = K_0 (|\mathbf{x}-\mathbf{y}|) + K_1 (\mathbf{x},\mathbf{y})$,
and if $K_1\equiv 0$, it corresponds to the case treated in Theorem \ref{thm:main},
and if $K_1 \not\equiv 0$, it is the case studied in Theorem \ref{thm:perturbative}.
Below, we always identify the complex plane $\C$ with $\mathbb{R}^2$.
\\[1mm]
Before proceeding with  the  proofs, we collect some useful facts in polar coordinates.
Denote by
\begin{align}\label{def:G}
  G_1(\rho_1,\theta,\rho_2,\eta) \triangleq K_1(\rho_1e^{i\theta},\rho_2e^{i\eta}),
  \quad G(\rho_1,\theta,\rho_2,\eta) \triangleq K(\rho_1e^{i\theta},\rho_2e^{i\eta}),
\end{align}
then thanks to ($\mathbf{A}4$) we have
\begin{equation}\label{eq:G1-prop1}
\begin{split}
  G_1(\rho_1,-\theta,\rho_2,-\eta) & = G_1(\rho_1,\theta,\rho_2,\eta),\quad \\
  G_1(\rho_1,\theta+\theta',\rho_2,\eta+\theta') & = G_1(\rho_1,\theta,\rho_2,\eta),\quad
  \forall \theta'\in \mathbb{R}.
\end{split}
\end{equation}
Hence, we get in particular
\begin{align}\label{eq:G1-parity}
  G_1(\rho_1,0,\rho_2,-\eta)=G_1(\rho_1,0,\rho_2,\eta),
\end{align}
and differentiating at $\theta^\prime=0$ the second identity  in \eqref{eq:G1-prop1} yields
\begin{equation}\label{eq:G-deriv}
\begin{split}
  \partial_\theta G_1(\rho_1,\theta,\rho_2,\eta) & =-\partial_\eta G_1(\rho_1,\theta,\rho_2,\eta), \\
  \quad \partial_\theta G(\rho_1,\theta,\rho_2,\eta) &=-\partial_\eta G(\rho_1,\theta,\rho_2,\eta).
\end{split}
\end{equation}
By setting $\mathbf{x}= \rho_1 e^{i\theta}$ and $\mathbf{y} = \rho_2 e^{i\eta}$,
we get from straightforward computations
\begin{align}\label{eq:K-der}
		\nabla_{\mathbf{x}} K(\mathbf{x},\mathbf{y})=\begin{pmatrix}
		\partial_{\rho_1}G\cos \theta - \partial_{\theta}G \frac{\sin \theta}{\rho_1}\\
		\partial_{\rho_1}G\sin \theta + \partial_{\theta}G  \frac{\cos \theta}{\rho_1}
	\end{pmatrix},
\end{align}
\begin{align}\label{eq:K-der2}
     \nabla_{\mathbf{y}} K (\mathbf{x},\mathbf{y})=\begin{pmatrix}
		\partial_{\rho_2}G \cos \eta - \partial_{\eta}G \frac{\sin \eta}{\rho_2}\\
		\partial_{\rho_2}G\sin \eta+\partial_{\eta}G  \frac{\cos \eta}{\rho_2}
	\end{pmatrix}.
\end{align}
We note that the  above identities  hold true when  $(K,G)$ is replaced by $(K_1,G_1)$.
\\[1mm]
Next, we shall introduce the function spaces that will be used in  the bifurcation arguments.
For $m\in \mathbb{N}^\star$ and $\alpha\in(0,1)$, we define
\begin{align}\label{def:Xm}
  \mathbf{X}=\mathbf{X}_m\triangleq
  \Big\{ f\in C^{2-\alpha}(\T): f(\theta)=\sum_{n\geqslant 1}b_n \cos (nm\theta), b_n\in \R, \theta\in \T \Big\},
\end{align}
and
\begin{align}\label{def:Ym}
  \mathbf{Y}= \mathbf{Y}_m
  \triangleq \Big\{ f\in C^{1-\alpha}(\T): f(\theta)=\sum_{n\geqslant 1}b_n \sin (nm\theta), b_n\in \R, \theta\in \T \Big\},
\end{align}
equipped with the usual norms. For $\epsilon_0>0$, we denote by $\mathbf{B}_{\epsilon_0}$ the open ball of $\mathbf{X}_m$  centered \mbox{at $0$} and of  radius $\epsilon_0$, that is,
\begin{align*}
  \mathbf{B}_{\epsilon_0}\triangleq \big\{f\in \mathbf{X}_m:\lVert f\rVert_{\mathbf{X}_m}<\epsilon_0\big\}.
\end{align*}

\subsection{Strong regularity}\label{subsec:reg}
This aim of this part is to explore the strong regularity of the functional $F$ described by \eqref{eq:F}.
We have the following result.
\begin{proposition}\label{prop:regularity-F1}
Let $m\geqslant 1$, $\alpha\in(0,1)$ and $\mathbf{X}_m$ and $\mathbf{Y}_m$ the spaces  given by \eqref{def:Xm}-\eqref{def:Ym}.
There \mbox{exists  $\epsilon_0>0$} small enough such that the following statements hold true.
\begin{enumerate}[(1)]
\item[(1)] $F:\mathbb{R}\times \mathbf{B}_{\epsilon_0}\to \mathbf{Y}_m$ is well-defined.
\item[(2)] $F:\mathbb{R}\times \mathbf{B}_{\epsilon_0}\to \mathbf{Y}_m$ is of class $C^1$.
\item[(3)] The partial derivative $\partial_{\Omega}\partial_r F: \mathbb{R}\times \mathbf{B}_{\epsilon_0}\to
\mathcal{L}(\mathbf{X}_m,\mathbf{Y}_m)$
exists and is continuous.
\end{enumerate}
\end{proposition}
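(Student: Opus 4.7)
The plan is to work from the additive decomposition $F(\Omega,r) = \Omega\, r'(\theta) + F_1(r)$ with $F_1(r) = \partial_\theta F_0[r]$, together with the split $K = K_0(|\mathbf{x}-\mathbf{y}|) + K_1(\mathbf{x},\mathbf{y})$ inherited from \eqref{case:K-2}. Accordingly I write $F_0[r] = F_0^{(0)}[r] + F_0^{(1)}[r]$ where the two summands correspond to $K_0$ and $K_1$ respectively. The contribution of $K_1$ is the easy piece: since by $(\mathbf{A}3)$ we have $K_1 \in C^k_{\mathrm{loc}}(\mathbf{D}^2)$ with $k\geqslant 4$, and the map $r \mapsto R = \sqrt{b^2+2r}$ sends $\mathbf{B}_{\epsilon_0} \subset \mathbf{X}_m$ into a bounded set of $C^{2-\alpha}$ (for $\epsilon_0$ small enough so that $b^2+2r \geqslant b^2/2$), the desired $C^1$ regularity of $F_0^{(1)}$ follows from classical composition/integration arguments applied to smooth kernels on $\mathbf{D}^2$. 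So the substantive issue is the $K_0$ contribution.

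Before touching regularity I would check that $F$ really lands in $\mathbf{Y}_m$. The $m$-fold symmetry of $F(\Omega,r)$ follows directly from the rotational invariance \eqref{Sym1} of $K$ applied to the change of variable $\eta \mapsto \eta + 2\pi/m$, so only the passage from cosine modes to sine modes needs comment: if $r \in \mathbf{X}_m$ then $r$ is even in $\theta$, hence so is $R(\theta)$, and the reflection symmetry \eqref{Sym01} combined with $\eta \mapsto -\eta$ shows that $F_0[r](\theta)$ is even in $\theta$; the outer $\partial_\theta$ then makes $F(\Omega,r)$ odd, and the inclusion $F(\Omega,r) \in \mathbf{Y}_m$ follows.

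The hard work lies in showing $F_0^{(0)}[r] \in C^{2-\alpha}$ (so that $F_1^{(0)}(r) \in C^{1-\alpha}$) and that the map $r \mapsto F_0^{(0)}[r]$ is $C^1$ with values in $C^{2-\alpha}$. The obstruction is that $(\mathbf{A}2)$ gives only an integrability condition on $K_0$, so the pointwise bounds on the kernel that were used in \cite{DHR19,HH15,HMV13,HXX23} are unavailable. The plan is to recast the relevant integrals in polar form as integral operators on the torus of the generic type appearing in \eqref{eq:int-operator} and then invoke Lemma \ref{lem:int-operator} and Lemma \ref{lem:int2}, which are tailored to handle precisely the $L^1$-type singularity allowed by $(\mathbf{A}2)$ once the weight $t^{-\alpha+\alpha^2}$ is absorbed into the Hölder estimate. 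To reach the $C^{2-\alpha}$ target I would differentiate $F_0^{(0)}[r]$ once in $\theta$, observe that the resulting kernel inherits a comparable singular structure after integration by parts (exploiting the $\partial_\theta G = -\partial_\eta G$ relation from \eqref{eq:G-deriv}), and then apply the integral operator lemmas a second time via a persistence-of-regularity bootstrap, following the template of Subsection \ref{subsec:reg}.

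Granted the above, the $C^1$ claim (2) is obtained by justifying the differentiation under the integral sign using the formal derivative already produced in \eqref{eq:F_Lin_r}, and checking its continuity in $(\Omega,r)$ by the same integral-operator estimates applied to the linearized kernels. Finally, (3) is essentially free: since $F_1(r)$ does not depend on $\Omega$, one has $\partial_\Omega F(\Omega,r)(\theta) = r'(\theta)$, hence $\partial_r\partial_\Omega F(\Omega,r)[h] = h'$, a fixed bounded linear operator $\mathbf{X}_m \to \mathbf{Y}_m$ independent of $(\Omega,r)$; Schwarz's theorem in the form guaranteed by the established $C^1$ regularity of (2) then identifies it with $\partial_\Omega\partial_r F$, which is thus trivially continuous. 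The only genuine obstacle in the whole proof is therefore the two-step Hölder control of the singular integral operator associated with $K_0$, where the integrability exponent in $(\mathbf{A}2)$ is exactly what makes Lemma \ref{lem:int-operator}--Lemma \ref{lem:int2} applicable.
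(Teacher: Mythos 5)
Your outline matches the paper on the $K_1$-contribution, the symmetry argument placing $F(\Omega,r)\in\mathbf{Y}_m$, and parts (2) and (3). But the key mechanism you name for the $K_0$-contribution is insufficient. The identity $\partial_\theta G=-\partial_\eta G$ from \eqref{eq:G-deriv} is a \emph{rotational} invariance and therefore rewrites only the angular piece of the chain-rule derivative $\partial_\theta\big[K_0(|R(\theta)e^{i\theta}-\rho e^{i\eta}|)\big]$. After integrating by parts in $\eta$ with it, you are still left with the radial piece proportional to $R'(\theta)$,
\begin{equation*}
R'(\theta)\int_0^{2\pi}\int_0^{R(\eta)}\Big(\nabla_{\mathbf{x}}K_0\big(|R(\theta)e^{i\theta}-\rho e^{i\eta}|\big)\cdot e^{i\theta}\Big)\,\rho\,\dd\rho\,\dd\eta,
\end{equation*}
which is a bulk integral of $K_0'$, potentially much more singular than $K_0$ under $(\mathbf{A}2)$ alone and not of the torus-operator form \eqref{eq:int-operator}. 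What the paper actually uses is the stronger \emph{translation} invariance $\nabla_{\mathbf{x}}K_0(|\mathbf{x}-\mathbf{y}|)=-\nabla_{\mathbf{y}}K_0(|\mathbf{x}-\mathbf{y}|)$ of the convolution kernel, which makes Gauss--Green applicable to the full gradient at once: this yields $\partial_\theta F_0[r]=(F_{00}[r]+F_{01}[r])\cdot\partial_\theta(R(\theta)e^{i\theta})$ with
\begin{equation*}
F_{00}[r](\theta)=\int_{\mathbb{T}}K_0\big(|R(\theta)e^{i\theta}-R(\eta)e^{i\eta}|\big)\,\partial_\eta\big(R(\eta)e^{i\eta}\big)\,\dd\eta,
\end{equation*}
a genuine boundary integral in $K_0$ itself, not $K_0'$. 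A single application of Lemma \ref{lem:int-operator} with $n=1$, fed by the kernel bounds \eqref{ineq:K_0-int} and \eqref{ineq:K-0-int-diff-1} obtained through Lemma \ref{lem:int2}, then gives $F_{00}[r]\in C^{1-\alpha}$, and the product rule finishes part (1); there is no two-step bootstrap to $C^{2-\alpha}$. To repair the outline, replace ``$\partial_\theta G=-\partial_\eta G$'' by ``$\nabla_\mathbf{x}K_0=-\nabla_\mathbf{y}K_0$ together with the divergence theorem'' and drop the second application of the integral-operator lemma.
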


\begin{proof}[Proof of Proposition \ref{prop:regularity-F1}]
{\bf{(1)}} Using Gauss-Green theorem (similarly as deriving \eqref{def:Vr}), we can rewrite $\partial_{\theta}F_0[r]$ as
\begin{align}\label{eq:F_1}
  \partial_{\theta}F_0[r]=(F_{00}[r] + F_{01}[r])\cdot \partial_{\theta}(R(\theta)e^{i\theta})
\end{align}
where 
\begin{align*}
  F_{00}[r](\theta) & = \int_0^{2\pi}\int_0^{R(\eta)}\nabla_{\mathbf{x}} K_0\big(|R(\theta) e^{i\theta} -\rho e^{i\eta}|\big)
  \rho \,\dd\rho \dd\eta \\
  & = \int_{\mathbb{T}} K_0\big(|R(\theta)e^{i\theta}-R(\eta)e^{i\eta}|\big)
  \partial_\eta(R(\eta)e^{i\eta}) \dd \eta,
\end{align*}
and
\begin{align*}
  F_{01}[r](\theta) \triangleq
  \int_0^{2\pi}\int_0^{R(\eta)}\nabla_{\mathbf{x}} K_1(R(\theta) e^{i\theta},\rho e^{i\eta})
  \rho \,\dd\rho \dd\eta.
\end{align*}
Since $\partial_{\theta}(R(\theta)e^{i\theta})= \big(\frac{r'(\theta)}{R(\theta)} e^{i\theta}
+ R(\theta) i e^{i\theta}\big)\in C^{1-\alpha}(\T)$, then from \eqref{eq:F} and \eqref{eq:F_1} and  in order to show
$F(\Omega,r)\in C^{1-\alpha}(\mathbb{T})$, we only need to check that
\begin{align}\label{reg:holder}
  \theta\in \mathbb{T}\mapsto F_{00}[r] , F_{01}[r] \in C^{1-\alpha}(\mathbb{T}).
\end{align}
Next we plan to prove \eqref{reg:holder}.
First, by letting $\epsilon_0>0$ small enough, we have that for every $r\in \mathbf{B}_{\epsilon_0}$
and for every $\theta,\eta\in \mathbb{R}$,
\begin{align}\label{ineq:basic}
  b \big|\sin \tfrac{\theta-\eta}{2}\big|\leqslant |R(\theta)e^{i\theta}-R(\eta)e^{i\eta}|
  \leqslant 3 b \big|\sin \tfrac{\theta-\eta}{2}\big|.
\end{align}
Indeed, this is quite similar to \cite[Eq. (59)]{HXX23}: according to the following estimates
\begin{align*}
  R(\theta)e^{i\theta} - R(\eta)e^{i\eta} = b e^{i\eta} \big(e^{i(\theta -\eta)} - 1\big)
  + \Big(\big(R(\theta)e^{i\theta}
  -b e^{i\theta} \big) - \big(R(\eta)e^{i\eta} -b e^{i\eta} \big) \Big),
\end{align*}
and $|\partial_\theta\big(R(\theta)e^{i\theta}-b e^{i\theta}\big)| \leqslant \frac{2\epsilon_0}{\sqrt{b^2 - 2\epsilon_0}}$,
and
\begin{equation}\label{eq:sin-theta}
\tfrac{2|\theta|}{\pi }\leqslant |e^{i\theta}-1|=|2\sin \tfrac{\theta}{2}|\leqslant |\theta|,\quad\text{ for }\; |\theta|\leqslant \pi,
\end{equation}
we can get \eqref{ineq:basic} for every $|\theta-\eta|\leqslant \pi $ by letting $\epsilon_0>0$
sufficiently small; and the general case follows from the periodicity property.
Now, define
\begin{align*}
  \mathbf{k}_1(\theta,\eta) \triangleq  K_0\big(|R(\theta)e^{i\theta}-R(\eta)e^{i\eta}|\big).
\end{align*}
Using the monotonicity of $K_0(t)$ and \eqref{ineq:basic}, we deduce that
\begin{align*}
  |\mathbf{k}_1(\theta,\theta+\eta)| \leqslant \max\Big\{\big|K_0(b|\sin \tfrac{\eta}{2}|)\big|,
  \big|K_0(3b|\sin \tfrac{\eta}{2}|)\big|\Big\}
  \triangleq H_1\big(|\sin \tfrac{\eta}{2} |\big),
\end{align*}
and from \eqref{cond:K0} (noting that \eqref{cond:K0} implies $\int_0^{a_0} |K_0(t)|\dd t <\infty$) and \eqref{eq:sin-theta} we get
\begin{equation}\label{ineq:K_0-int}
\begin{split}
  \int_{\mathbb{T}} H_1\big(\big|\sin \tfrac{\eta}{2} \big|\big) \dd \eta
  \leqslant & 2\max\Big\{ \int_0^\pi |K_0(\tfrac{b}{\pi}\eta)|\dd \eta,
  \int_0^\pi |K_0(\tfrac{3b}{2}\eta)|\dd \eta \Big\} \\
  \leqslant & \frac{C_0}{b} \int_0^{\frac{3\pi}{2}b} |K_0(t)|\dd t<\infty.
\end{split}
\end{equation}
Noticing that $\partial_{\eta}(R(\eta)e^{i\eta}) = \big(\frac{r'(\eta)}{R(\eta)} e^{i\eta}
+ R(\eta) i e^{i\eta}\big)\in C^{1-\alpha}(\T)$, and the estimate
\begin{align*}
  \Big|\partial_\theta\Big(R(\theta)e^{i\theta}-R(\theta+\eta)e^{i(\theta+\eta)}\Big)\Big|
  \leqslant C |\eta|^{1-\alpha} \leqslant C \pi^{1-\alpha} \big|\sin \tfrac{\eta}{2}\big|^{1-\alpha},
  \quad \forall |\eta|\leqslant \pi,
\end{align*}
combined with the periodic property of $R$ leads to
\begin{align}\label{eq:fact0}
  \Big|\partial_\theta\Big(R(\theta)e^{i\theta}-R(\theta+\eta)e^{i(\theta+\eta)}\Big)\Big|
  \leqslant C \big|\sin \tfrac{\eta}{2}\big|^{1-\alpha},\quad \forall \eta \in\mathbb{R}.
\end{align}
Then, we use \eqref{ineq:basic} to deduce that
\begin{align*}
  \big|\partial_\theta\big(\mathbf{k}_1(\theta,\theta+\eta)\big)\big|\leqslant
  & C \big| K'_0\big(|R(\theta)e^{i\theta}-R(\theta+\eta)e^{i(\theta+\eta)}|\big)\big|
  \Big|\partial_\theta \Big(R(\theta)e^{i\theta}-R(\theta+\eta)e^{i(\theta+\eta)}\Big)\Big| \\
  \leqslant & C \big|K'_0\big(b|\sin \tfrac{\eta}{2}|\big)\big|
  \big|\sin \tfrac{\eta}{2}\big|^{1-\alpha}\triangleq H_2\big(\big|\sin \tfrac{\eta}{2}\big|\big).
\end{align*}
In addition, in view of Lemma \ref{lem:int2} (with $\beta = -\alpha(1- \alpha)$) and \eqref{cond:K0}, \eqref{eq:sin-theta}, we have
\begin{equation}\label{ineq:K-0-int-diff-1}
\begin{split}
  \int_{\mathbb{T}} \big(H_1(|\sin \tfrac{\eta}{2}|) \big)^\alpha
  \big(H_2(|\sin \tfrac{\eta}{2}|) \big)^{1-\alpha} \dd \eta
  & \leqslant C
  \int_0^\pi |K_0(C_1\eta)|^\alpha |K'_0(b \eta)|^{1-\alpha} \eta^{(1-\alpha)^2}\dd \eta  \\
  & \leqslant C \int_0^{(\frac{3b}{2}\vee 1)\pi} |K_0(t)| t^{-\alpha+\alpha^2}\dd t + C < \infty,
\end{split}
\end{equation}
where $C_1$ equals either $\frac{b}{\pi}$ or $\frac{3b}{2}$.
Hence, gathering \eqref{ineq:K_0-int}, \eqref{ineq:K-0-int-diff-1} with Lemma \ref{lem:int-operator} implies
\begin{align}\label{es:F00-Hold}
  \|F_{00}[r]\|_{C^{1-\alpha}}\leqslant C \|\partial_\eta (R(\eta) e^{i\eta})\|_{C^{1-\alpha}} \leqslant C.
\end{align}
\vskip0.5mm
\noindent For the remaining result in \eqref{reg:holder}, using the  assumption ($\mathbf{A}$3),
one can easily show that
\begin{align}\label{es:F01}
  \|F_{01}[r]\|_{C^1(\T)} \leqslant
  C \sup_{\mathbf{x},\mathbf{y}\in B(0,b+\sqrt{2\epsilon_0})} \Big(|\nabla_{\mathbf{x}}K_1(\mathbf{x},\mathbf{y})|+
  |\nabla^2_{\mathbf{x}} K_1(\mathbf{x},\mathbf{y})| \|\partial_\theta(R(\theta)e^{i\theta})\|_{L^\infty} \Big)
  \leqslant C,
\end{align}
which guarantees that $F_{01}[r]$ belongs to
$C^{1-\alpha}(\mathbb{T})$, as desired.
\\[1mm]
Now, we prove that $F(\Omega,r)$ given by \eqref{eq:F} has the series expansion as in $\mathbf{Y}_m$.
Indeed, noting that
under the assumption $(\mathbf{A}4)$, the kernel  $K$ satisfies
\begin{align*}
  K(\bar{\mathbf{x}},\bar{\mathbf{y}}) = K(\mathbf{x},\mathbf{y}),\quad
  K(e^{i\theta} \mathbf{x}, e^{i\theta} \mathbf{y}) = K(\mathbf{x},\mathbf{y}),\;\;\forall \theta\in\mathbb{R},
\end{align*}
we can argue as in  \cite[p. 27]{HXX23} to deduce that
\begin{align*}
  F_0[r](-\theta)= F_0[r](\theta),\quad F_0[r](\theta + \tfrac{2\pi}{m}) = F_0[r](\theta),
\end{align*}
which leads to
\begin{align*}
  F_0[r](\theta) = \sum_{n=0}^\infty c_{nm} \cos(nm \theta),\quad c_{nm}\in \mathbb{R},
\end{align*}
and consequently, $F(\Omega,r)$ has the desired expansion formula.
Therefore, by taking  $\epsilon_0>0$ small enough we conclude that $F(\Omega,r)\in \mathbf{Y}_m$.
\\[1mm]
{\bf{(2)}} It is obvious to see that $\partial_{\Omega}F(\Omega,r)=r'$ is a continuous mapping.
So we only need to show that $\partial_rF(\Omega,r)$ is continuous with respect to $r$.
In view of \eqref{eq:F_Lin_r}-\eqref{def:Vr}, we have
\begin{align}\label{eq:F_r}
  \partial_r F(\Omega,r) h(\theta)
  & = \partial_\theta \Big( \big(\Omega+V[r](\theta)\big) h(\theta)+\mathcal{L}[r](h)(\theta)\Big) \nonumber \\
  & = \partial_\theta \Big( \big(\Omega + V_0[r](\theta) + V_1[r](\theta) \big) h(\theta)+\mathcal{L}[r](h)(\theta)\Big) \nonumber \\
  & = \Omega h'(\theta) +  \partial_\theta\Big( \mathcal{I}_0[r](h) + \mathcal{I}_1[r](h)  \Big),
\end{align}
where
\begin{align*}
  \mathcal{I}_0[r](h)(\theta) & \triangleq \int_{\mathbb{T}} K_0\big(|R(\theta)e^{i\theta}
  - R(\eta)e^{i\eta}|\big) h(\eta) \dd \eta \\
  &\quad +\int_{\mathbb{T}} K_0\big(|R(\theta)e^{i\theta} - R(\eta)e^{i\eta}|\big)
  \big(i\partial_\eta \big(R(\eta)e^{i\eta}\big)\big)
  \cdot \big(\tfrac{h(\theta)}{R(\theta)}e^{i\theta}\big)  \dd \eta,
\end{align*}
and
\begin{align}\label{I1-iedntity}
  \nonumber \mathcal{I}_1[r](h)(\theta) \triangleq& \int_0^{2\pi} \int_0^{R(\eta)}
  \nabla_{\mathbf{x}} K_1 \big(R(\theta)e^{i\theta},  \rho e^{i\eta}\big)\cdot \big(\tfrac{h(\theta)}{R(\theta)}e^{i\theta}\big)
  \rho \dd \rho \dd \eta\\
  & + \int_{\mathbb{T}} K_1(R(\theta)e^{i\theta},R(\eta)e^{i\eta})h(\eta)\dd \eta.
\end{align}
For the term $\partial_\theta\mathcal{I}_0$, by using the notation $\nabla_\mathbf{x} K_0 (|\mathbf{x}-\mathbf{y}|) = K_0'(|\mathbf{x}-\mathbf{y}|) \frac{\mathbf{x} -\mathbf{y}}{|\mathbf{x} -\mathbf{y}|}$,
we decompose it as follows
\begin{align}\label{Pipaa1}
  \nonumber\partial_\theta \mathcal{I}_0[r](h)(\theta)   &=\int_{\mathbb{T}} K_0\big(|R(\theta)e^{i\theta}
  - R(\eta)e^{i\eta}|\big) \big(i\partial_\eta \big(R(\eta)e^{i\eta}\big)\big)
  \cdot \partial_\theta\big(\tfrac{h(\theta)}{R(\theta)}e^{i\theta}\big)  \dd \eta \\
  \nonumber& \quad +\partial_\theta\big(R(\theta)e^{i\theta}\big)
  \cdot\int_{\mathbb{T}}\nabla_{\mathbf{x}} K_0\big(|R(\theta)e^{i\theta} - R(\eta)e^{i\eta}|\big)
  \mathbf{w}(\theta,\eta)  \dd \eta\\
  &\triangleq \mathcal{I}_{00}[r](h)(\theta)+ \mathcal{I}_{01}[r](h)(\theta),
\end{align}
with
\begin{align*}
  \mathbf{w}(\theta,\eta)&=h(\eta)+\big(i\partial_\eta \big(R(\eta)e^{i\eta}\big)\big)
 \cdot \big(\tfrac{h(\theta)}{R(\theta)}e^{i\theta}\big)\\
 &=h(\eta)+\tfrac{h(\theta)}{R(\theta)}\partial_\eta \big(R(\eta)\sin(\theta-\eta)\big)\\
 &=\underbrace{h(\eta)-\tfrac{h(\theta)}{R(\theta)} R(\eta)\cos(\theta-\eta)}_{\triangleq \mathbf{w}_2(\theta,\eta)}+\underbrace{\tfrac{h(\theta)}{R(\theta)}\tfrac{r^\prime(\eta)}{R(\eta)}\sin(\theta-\eta)}_{\triangleq\mathbf{w}_3(\theta,\eta)}.
\end{align*}
The estimate of $\mathcal{I}_{00}[r](h)(\theta)$ is similar to that of $F_{00}[r]$ in \eqref{es:F00-Hold}. Actually, using the product laws in $C^{1-\alpha}(\T)$ we have
\begin{align}\label{ineq:bound-I-01}
  \lVert \mathcal{I}_{00}[r](h)\rVert_{C^{1-\alpha}}\leqslant C \big\|\partial_\eta(R(\eta)e^{i\eta})\big\|_{C^{1-\alpha}}
  \big\|\partial_\theta\big(\tfrac{h(\theta)}{R(\theta)} e^{i\theta}\big)\big\|_{C^{1-\alpha}}
  \leqslant C\lVert h\rVert_{C^{2-\alpha}}.
\end{align}
For $\mathcal{I}_{01}[r](h)$, since $\theta\in\T\mapsto \partial_\theta \big(R(\theta)e^{i\theta} \big) \in C^{1-\alpha}(\T)$,
then  using the product laws we get
\begin{equation}\label{P-loi}
\begin{split}
  \|\mathcal{I}_{01}[r](h)\|_{C^{1-\alpha}} &\leqslant C \Big\|\underbrace{\int_{\T} \nabla_{\mathbf{x}}K_0 \big(|R(\theta) e^{i\theta}
  - R(\eta)e^{i\eta}|\big) \,\mathbf{w}_2(\theta,\eta) \dd \eta}_{{\triangleq \mathcal{I}_{02}[r](h)}}  \Big\|_{C^{1-\alpha}}\\
  &\quad+C \Big\|\underbrace{\int_{\T} \nabla_{\mathbf{x}}K_0 \big(|R(\theta) e^{i\theta}
  - R(\eta)e^{i\eta}|\big) \,\mathbf{w}_3(\theta,\eta) \dd \eta}_{\triangleq \mathcal{I}_{03}[r](h)}  \Big\|_{C^{1-\alpha}}.
\end{split}
\end{equation}
We define
\begin{align*}
  \mathbf{k}_2(\theta,\eta)  & \triangleq
  \nabla_{\mathbf{x}}K_0\big(|R(\theta)e^{i\theta}-R(\eta)e^{i\eta}|\big) \mathbf{w}_2(\theta,\eta) \\
  & = K_0' \big(|R(\theta)e^{i\theta} - R(\eta) e^{i\eta}|\big)
  \tfrac{R(\theta)e^{i\theta} - R(\eta) e^{i \eta}}{|R(\theta)e^{i\theta} - R(\eta) e^{i\eta}|}
  \mathbf{w}_2(\theta,\eta) .
\end{align*}
Notice that
\begin{align*}
  \mathbf{w}_2(\theta,\theta+\eta)=h(\theta+\eta)-h(\theta)+h(\theta)\tfrac{R(\theta)-R(\theta+\eta)\cos \eta}{R(\theta)},
\end{align*}
and
\begin{align*}
  \partial_\theta\big(\mathbf{w}_2(\theta,\theta +\eta)\big)
  & = h'(\theta+\eta)-h'(\theta) + \partial_\theta\Big(\tfrac{h(\theta)}{R(\theta)}\Big)
  \big(R(\theta) - R(\theta+\eta) \cos\eta \big) \\
  & \quad + \tfrac{h(\theta)}{R(\theta)} \Big(\tfrac{r'(\theta)}{R(\theta)}
  -\tfrac{r'(\theta+\eta)\cos\eta}{R(\theta+\eta)} \Big).
\end{align*}
Arguing as for \eqref{eq:fact0}, we deduce that
\begin{align}\label{ineq:w-0-th}
  |\mathbf{w}_2(\theta,\theta+\eta)|\leqslant C \lVert h\rVert_{C^1}\big|\sin \tfrac{\eta}{2}\big|
\end{align}
and
\begin{align}\label{ineq:w-1-th}
  \big|\partial_\theta\big(\mathbf{w}_2(\theta,\theta+\eta)\big)\big|
  \leqslant C\lVert h\rVert_{C^{2-\alpha}} \big|\sin \tfrac{\eta}{2}\big|^{1-\alpha}.
\end{align}
Thanks to the non-increasing property of $|K_0'|$ and \eqref{ineq:basic},
we deduce that
\begin{align}\label{est:k_2}
  |\mathbf{k}_2(\theta,\theta+\eta)|\leqslant C \big|K'_0\big(\tfrac{b}{2}|\sin \tfrac{\eta}{2}|\big)\big| |\sin\tfrac{\eta}{2}|\lVert h\rVert_{C^1}
  \triangleq C H_3\big(|\sin \tfrac{\eta}{2}|\big) \lVert h\rVert_{C^1}.
\end{align}
Applying the estimates \eqref{eq:int-fk-fact} and \eqref{es:f-weig} allows to get
\begin{equation}\label{ineq:K_0-diff-1-1}
\begin{aligned}
  \int_{\T} H_3\big(|\sin \tfrac{\eta}{2}|\big)\dd \eta
  \leqslant  \int_0^\pi |K'_0(\tfrac{b}{2\pi} \eta)| \eta \,\dd\eta
  \leqslant  C \int_0^\pi |K_0(t)| \dd t + C .
\end{aligned}
\end{equation}
By using \eqref{ineq:basic}, \eqref{eq:fact0}, \eqref{ineq:w-0-th}-\eqref{ineq:w-1-th} and the non-increasing property of $|K_0'|$, $|K_0''|$, together with Lemma \ref{lem-high-deriv}-(1),
we infer that
\begin{align}\label{eq:par-k2}
  \big|\partial_\theta\big(\mathbf{k}_2(\theta,\theta+\eta)\big)\big|
  \leqslant &\,C\Big(|K'_0(b|\sin \tfrac{\eta}{2}|)| \big|\sin \tfrac{\eta}{2}\big|^{-\alpha}
  +|K''_0(b |\sin \tfrac{\eta}{2}|)|  \big|\sin \tfrac{\eta}{2}\big|^{1-\alpha}\Big)
  \lVert h\rVert_{C^1}\big|\sin \tfrac{\eta}{2}\big| \nonumber \\
  &+C\lVert h\rVert_{C^{2-\alpha}}|K'_0(b|\sin \tfrac{\eta}{2})| \big |\sin \tfrac{\eta}{2}\big|^{1-\alpha} \nonumber\\
  \leqslant &\,C\lVert h\rVert_{C^{2-\alpha}}\Big(|K'_0(b|\sin \tfrac{\eta}{2}|)|
  +|K''_0(b|\sin \tfrac{\eta}{2}|)| \big|\sin\tfrac{\eta}{2}\big|\Big)|\sin \tfrac{\eta}{2}|^{1-\alpha} \nonumber \\
  \leqslant& C\lVert h\rVert_{C^{2-\alpha}}|K'_0(\tfrac12b|\sin \tfrac{\eta}{2}|)||\sin \tfrac{\eta}{2}|^{1-\alpha}\triangleq  C\lVert h\rVert_{C^{2-\alpha}} H_4\big(|\sin \tfrac{\eta}{2}|\big).
\end{align}
Lemma \ref{lem:int2}, estimates \eqref{es:f-weig}, \eqref{eq:int-fk-fact} and assumption $(\mathbf{A}2)$ ensure that
\begin{align}\label{ineq:K_0-diff-1-2}
  \int_{\T} \big(H_3(|\sin \tfrac{\eta}{2}|)\big)^\alpha
  \big(H_4(|\sin \tfrac{\eta}{2}|)\big)^{1-\alpha}\dd \eta
  &\leqslant C \int_{\T} \big|K_0'(\tfrac{b}{2} |\sin\tfrac{\eta}{2}|) \big|
  \, |\sin \tfrac{\eta}{2}|^{\alpha   + (1-\alpha)^2} \dd \eta \nonumber \\
  & \leqslant C \int_0^\pi \big|K_0'(\tfrac{b}{2\pi} \eta) \big|
  \, \eta^{1 -\alpha   + \alpha^2} \dd \eta
  \nonumber \\
  & \leqslant C \int_0^\pi |K_0(t)|t^{-\alpha+\alpha^2}\dd t + C .
\end{align}
Hence according to \eqref{ineq:K_0-diff-1-1}, \eqref{ineq:K_0-diff-1-2} and Lemma \ref{lem:int-operator}, we find
\begin{align}\label{es:J1}
  \Big\| \mathcal{I}_{02}[r](h)  \Big\|_{C^{1-\alpha}}
  \leqslant C \lVert h\rVert_{C^{2-\alpha}}.
\end{align}
For $\mathcal{I}_{03}[r](h)$, we set
\begin{align*}
  \mathbf{k}_3(\theta,\eta) \triangleq \nabla_{\mathbf{x}}K_0\big(|R(\theta)e^{i\theta}-R(\eta)e^{i \eta}|\big)\sin (\theta-\eta).
\end{align*}
In a similar way as for deriving \eqref{est:k_2} and \eqref{eq:par-k2}, we have
\begin{align*}
  |\mathbf{k}_3(\theta,\theta+\eta)|\leqslant C H_3\big(|\sin\tfrac{\eta}{2}|\big),
  \quad \big|\partial_\theta \big(\mathbf{k}_3(\theta,\theta+\eta)\big)\big|\leqslant C H_4\big(|\sin \tfrac{\eta}{2}|\big).
\end{align*}
Lemma \ref{lem:int-operator} and \eqref{ineq:K_0-diff-1-1}, \eqref{ineq:K_0-diff-1-2} guarantee that
\begin{align}\label{es:int-k3}
  \Big\lVert \int_{\mathbb{T}} \mathbf{k}_3(\theta,\eta) \tfrac{r'(\eta)}{R(\eta)}
  \dd \eta\Big\rVert_{C^{1-\alpha}}\leqslant C \Big\lVert \tfrac{r'(\eta)}{R(\eta)}\Big\rVert_{C^{1-\alpha}}\leqslant C.
\end{align}
Hence, it follows from \eqref{P-loi} and the product laws in $C^{1-\alpha}(\T)$ that
\begin{equation}\label{ineq:bound-I-04}
\begin{aligned}
  \lVert \mathcal{I}_{03}[r](h)\rVert_{C^{1-\alpha}}\leqslant C\lVert h\rVert_{C^{1-\alpha}}.
\end{aligned}
\end{equation}
Putting together \eqref{es:J1} and  \eqref{ineq:bound-I-04} yields
\begin{align}\label{I01-Es}
  \Big\|\partial_\theta \mathcal{I}_0[r]h  \Big\|_{C^{1-\alpha}}\leqslant C \lVert h\rVert_{C^{2-\alpha}}.
\end{align}
Let us now move to the term $\mathcal{I}_1$ defined in \eqref{I1-iedntity}. Then one gets
\begin{align}\label{decom:parI1}
  \partial_\theta\mathcal{I}_1[r]h(\theta) & = \int_0^{2\pi} \int_0^{R(\eta)}
  \nabla_{\mathbf{x}} K_1 \big(R(\theta)e^{i\theta},  \rho e^{i\eta}\big)
  \rho \dd \rho \dd \eta \cdot
  \partial_\theta \big(\tfrac{h(\theta)}{R(\theta)}e^{i\theta}\big) \nonumber \\
  & \quad + \partial_\theta\big(R(\theta) e^{i\theta}\big)\cdot \int_0^{2\pi} \int_0^{R(\eta)}\nabla_{\mathbf{x}}^2
  K_1 \big(R(\theta)e^{i\theta},  \rho e^{i\eta}\big) \rho \dd \rho \dd \eta \cdot
  \big(\tfrac{h(\theta)}{R(\theta)}e^{i\theta}\big) \nonumber \\
  & \quad + \partial_\theta\big(R(\theta)e^{i\theta} \big) \cdot\int_{\mathbb{T}}
  \nabla_{\mathbf{x}}K_1(R(\theta)e^{i\theta},R(\eta)e^{i\eta})h(\eta)\dd \eta \nonumber \\
  & \triangleq \,\mathcal{I}_{11}[r](h)(\theta) + \mathcal{I}_{12}[r](h)(\theta) + \mathcal{I}_{13}[r](h)(\theta).
\end{align}
Since $K_1\in C_{\text{loc}}^{4}(\mathbf{D}^2)$, then  arguing as for the estimate of  $F_{01}[r]$ in \eqref{es:F01},
one can easily show that
\begin{align*}
  \|\mathcal{I}_{11}[r](h)\|_{C^{1-\alpha}} & \leqslant C
  \Big\|\int_0^{2\pi}\int_0^{R(\eta)} \nabla_{\mathbf{x}}K_1\big(R(\theta)e^{i\theta},\rho e^{i\eta}\big)
  \rho\dd\rho\dd\eta \Big\|_{C^1} \big\|\partial_\theta\big(\tfrac{h(\theta)}{R(\theta)}e^{i\theta}\big) \big\|_{C^{1-\alpha}} \\
  & \leqslant C \|h\|_{C^{2-\alpha}}.
\end{align*}
In a similar way, we find
\begin{align}\label{ineq:bound-I-1}
  \nonumber\lVert \partial_\theta\mathcal{I}_1[r]h\rVert_{C^{1-\alpha}}&\leqslant \|\mathcal{I}_{11}[r](h)\|_{C^{1-\alpha}}
  + \|\mathcal{I}_{12}[r](h)\|_{C^{1-\alpha}} + \|\mathcal{I}_{13}[r](h)\|_{C^{1-\alpha}}\\
  & \leqslant C\lVert h\rVert_{C^{2-\alpha}}.
\end{align}
Therefore,  by collecting \eqref{eq:F_r}-\eqref{I01-Es} and \eqref{ineq:bound-I-1} we infer
\begin{align*}
  \lVert \partial_rF(\Omega,r)h\rVert_{C^{1-\alpha}}\leqslant C\lVert h\rVert_{C^{2-\alpha}}.
\end{align*}
\vskip1mm
\noindent The next goal is to prove that for given $\Omega\in \R$, the mapping $r\mapsto \partial_rF(\Omega,r)\in \mathcal{L}(\mathbf{X}_m,\mathbf{Y}_m)$
is continuous. 
Thanks to \eqref{eq:F_r}, \eqref{Pipaa1}, \eqref{decom:parI1}, it suffices to show that, for every
$r_1,r_2\in \mathbf{B}_{\epsilon_0}$ as $\lVert r_1-r_2\rVert_{C^{2-\alpha}}\to 0$,
\begin{align}\label{ineq:contin-whole}
  \sup_{\lVert h\rVert_{C^{2-\alpha}}\leqslant 1}
  \bigg( \sum_{j=0}^1 \big\lVert \mathcal{I}_{0j}[r_1](h)- \mathcal{I}_{0j}[r_2](h) \big\rVert_{C^{1-\alpha}}
  +\sum_{j=1}^3\big\lVert \mathcal{I}_{1j}[r_1](h)- \mathcal{I}_{1j}[r_2](h) \big\rVert_{C^{1-\alpha}}  \bigg) \to 0.
\end{align}
Denote by $R_j(\theta) \triangleq \sqrt{b^2 + 2 r_j(\theta)}$, $j=1,2$.
For $\mathcal{I}_{00}$ given by \eqref{Pipaa1}, we get
\begin{align*}
   &\|\mathcal{I}_{00}[r_1](h) - \mathcal{I}_{00}[r_2](h)\|_{C^{1-\alpha}}  \leqslant C \Big\|\int_{\mathbb{T}} \mathbf{k}_4(\theta,\eta) \partial_\eta\big(R_1(\eta) e^{i\eta}\big) \dd \eta\Big\|_{C^{1-\alpha}}
  \big\|\partial_\theta\big(\tfrac{h(\theta)}{R_1(\theta)}e^{i\theta}\big) \big\|_{C^{1-\alpha}} \nonumber \\
  & \quad\qquad  + C \Big\|\int_{\T} K_0\big(|X_2(\theta,\eta)|\big)
  \partial_\eta\big(\big(R_1(\eta) - R_2(\eta)\big)e^{i\eta}\big) \dd \eta \Big\|_{C^{1-\alpha}}
  \big\|\partial_\theta\big(\tfrac{h(\theta)}{R_1(\theta)}e^{i\theta}\big) \big\|_{C^{1-\alpha}}  \\
  & \quad \qquad +  C \Big\|\int_{\T} K_0\big(|X_2(\theta,\eta)|\big) \partial_\eta\big(R_2(\eta)e^{i\eta}\big) \dd \eta \Big\|_{C^{1-\alpha}} \big\|\partial_\theta\big(\tfrac{h(\theta)}{R_1(\theta)}e^{i\theta}\big)
  -\partial_\theta\big(\tfrac{h(\theta)}{R_2(\theta)}e^{i\theta}\big)\big\|_{C^{1-\alpha}}, \nonumber
\end{align*}
where
\begin{align}\label{def:Xj}
  X_j(\theta,\eta) \triangleq  R_j(\theta)e^{i\theta}-R_j(\eta)e^{i\eta},\quad j=1,2,
\end{align}
and
\begin{align*}
  \mathbf{k}_4(\theta,\eta) \triangleq K_0 \big(|X_1(\theta,\eta)|\big) - K_0 \big(|X_2(\theta,\eta)|\big).
\end{align*}
Hence,
taking advantage of  the estimates
\begin{align*}
  \|R_1(\eta)-R_2(\eta)\|_{C^{2-\alpha}} \leqslant C \|r_1 - r_2\|_{C^{2-\alpha}},
\end{align*}
and
\begin{align*}
  \big\|\partial_\theta\big(\tfrac{h(\theta)}{R_1(\theta)}e^{i\theta}\big)
  - \partial_\theta\big(\tfrac{h(\theta)}{R_2(\theta)} e^{i\theta}\big) \big\|_{C^{1-\alpha}}
  \leqslant C \|h\|_{C^{2-\alpha}} \|r_1 - r_2\|_{C^{2-\alpha}},
\end{align*}
together with \eqref{es:F00-Hold} we deduce that
\begin{equation}\label{es:I01-r}
\begin{split}
  \|\mathcal{I}_{00}[r_1](h) - \mathcal{I}_{00}[r_2](h)\|_{C^{1-\alpha}}
  & \leqslant C \lVert h\rVert_{C^{2-\alpha}} \lVert r_1-r_2\rVert_{C^{2-\alpha}} \\
  & \quad +  C \|h\|_{C^{2-\alpha}} \Big\|\int_{\mathbb{T}} \mathbf{k}_4(\theta,\eta)\partial_\eta\big( R_1(\eta) e^{i\eta}\big) \dd \eta\Big\|_{C^{1-\alpha}}.
\end{split}
\end{equation}
The next goal is to estimate $\mathbf{k}_4$ and without loss of generality we can assume that $|X_1(\theta,\theta+\eta)|\leqslant |X_2(\theta,\theta+\eta)|$. Then, according to  Lemma \ref{lem-high-deriv}-(2) we deduce the inequality
\begin{align*}
  |\mathbf{k}_4(\theta,\theta+\eta)| \leqslant
  \big(|X_1(\theta,\theta+\eta)|-|X_2(\theta,\theta+\eta)|\big)K_0^\prime(|X_1(\theta,\theta+\eta)|).
\end{align*}
Using the triangle inequality together with \eqref{ineq:basic} yields
\begin{align*}
  |\mathbf{k}_4(\theta,\theta+\eta)| \leqslant (|X_1(\theta,\theta+\eta)-X_2(\theta,\theta+\eta)|\big)
  \big|K_0^\prime( b |\sin \tfrac{\eta}{2}|)\big|.
\end{align*}
Applying Taylor's formula and using  the $2\pi$-periodicity, implying that we can assume $|\eta|\leqslant \pi$,
\begin{align}\label{eq:X1-X2}
  \nonumber |X_1(\theta,\theta+\eta) - X_2(\theta,\theta+\eta)| & = \Big|\int_0^1 \partial_\theta
  \Big(R_1(\theta + \tau \eta ) e^{i (\theta + \tau \eta)} - R_2(\theta + \tau \eta) e^{i (\theta + \tau \eta)} \Big)
  \cdot \eta \dd \tau \Big| \\
  & \leqslant C_0 \|R_1 - R_2\|_{C^1} |\eta|
  \leqslant C_0 \|r_1 - r_2\|_{C^1} |\sin \tfrac{\eta}{2}|.
\end{align}
Therefore we deduce that
\begin{align}\label{eq:bound-k4-0-th}
 |\mathbf{k}_4(\theta,\theta+\eta) \leqslant C \|r_1 - r_2\|_{C^1}\big|K_0^\prime( \tfrac12b\big|\sin \tfrac{\eta}{2}\big|)\big| |\sin \tfrac{\eta}{2}|\triangleq C\lVert r_1-r_2\rVert_{C^1}H_3\big(|\sin \tfrac{\eta}{2}|\big).
\end{align}
Now, we shall estimate the derivative $  \partial_\theta \big(\mathbf{k}_4(\theta,\theta+\eta)\big)$ which takes the form
\begin{align*}
  \partial_\theta \big(\mathbf{k}_4(\theta,\theta+\eta)\big)
  & = \Big(K_0'\big(|X_1(\theta,\theta+\eta)|\big) - K_0'\big(|X_2(\theta,\theta+\eta)| \big) \Big)
  \tfrac{X_1(\theta,\theta+\eta)}{|X_1(\theta,\theta+\eta)|} \cdot \partial_\theta \big(X_1(\theta,\theta+\eta)\big) \\
  & \quad + K_0'\big(|X_2(\theta,\theta+\eta)| \big) \Big( \tfrac{X_1(\theta,\theta+\eta)}{|X_1(\theta,\theta+\eta)|}
  - \tfrac{X_2(\theta,\theta+\eta)}{|X_2(\theta,\theta+\eta)|}  \Big) \cdot \partial_\theta \big(X_1(\theta,\theta+\eta)\big) \\
  & \quad + K_0'\big(|X_2(\theta,\theta+\eta)|\big) \tfrac{X_2(\theta,\theta+\eta)}{|X_2(\theta,\theta+\eta)|} \cdot
  \Big(\partial_\theta \big(X_1(\theta,\theta+\eta)\big) - \partial_\theta \big(X_2(\theta,\theta+\eta)\big) \Big) .
\end{align*}
As $R_1\in C^{2-\alpha}(\T),$ then we deduce that for  $|\eta|\leqslant \pi$,
\begin{align*}
  \big|\partial_\theta \big(X_1(\theta,\theta+\eta)\big)\big| & \leqslant \big| R_1'(\theta) e^{i\eta}
  - R_1'(\theta+\eta) e^{i(\theta+\eta)} \big|
  + \big|R_1(\theta) i e^{i\theta} - R_1(\theta+\eta) i e^{i(\theta+\eta)} \big| \\
  & \leqslant C \big(\|R_1'\|_{C^{1-\alpha}} + \|R_1\|_{C^{1-\alpha}} \big) |\eta|^{1-\alpha} \\
  & \leqslant C \|r_1\|_{C^{2-\alpha}} |\sin \tfrac{\eta}{2}|^{1-\alpha}.
\end{align*}
Similarly, we get
\begin{align*}
   \big|\partial_\theta \big( X_1(\theta,\theta+\eta)\big) - \partial_\theta \big(X_2(\theta,\theta+\eta)\big) \big|&  \leqslant \big|\big(R_1'(\theta) - R_2'(\theta)\big)
  - \big(R_1'(\theta +\eta) - R_2'(\theta+\eta)\big) e^{i\eta}\big| \\
  & \quad +  \big|\big(R_1(\theta) - R_2(\theta)\big)
  - \big(R_1(\theta +\eta) - R_2(\theta+\eta)\big)  e^{i\eta}\big| \\
  & \leqslant C \big(\|R_1' - R_2'\|_{C^{1-\alpha}} + \|R_1 - R_2\|_{C^{1-\alpha}} \big) |\eta|^{1-\alpha}\\
 & \leqslant C \|r_1 -r_2 \|_{C^{2-\alpha}} |\sin\tfrac{\eta}{2}|^{1-\alpha}.
\end{align*}
We may assume $|X_1(\theta,\theta+\eta)|\leqslant|X_2(\theta,\theta+\eta)|$. Then, applying  Lemma \ref{lem-high-deriv}-(2) with the triangle inequality allows us to get,
\begin{align*}
  \big|K_0'(|X_1(\theta,\theta+\eta)|) - K_0'(|X_2(\theta,\theta+\eta)|) \big|\leqslant \big(|X_1(\theta,\theta+\eta)-X_2(\theta,\theta+\eta)|\big)\big|K_0^{\prime\prime}(|X_1(\theta,\theta+\eta)|)\big|.
\end{align*}
Thus, we obtain by virtue of \eqref{eq:X1-X2}, the monotonicity of $|K_0^{\prime\prime}|$ and \eqref{ineq:basic},
\begin{align*}
   \big|K_0'(|X_1(\theta,\theta+\eta)|) - K_0'(|X_2(\theta,\theta+\eta)|) \big|
  & \leqslant  C_0 \|r_1 - r_2\|_{C^1} |\sin \tfrac{\eta}{2}|\big|K_0''(b|\sin\tfrac{\eta}{2}|)\big|.
\end{align*}
Using Lemma \ref{lem-high-deriv}-(1) gives
\begin{align*}
   \big|K_0'(|X_1(\theta,\theta+\eta)|) - K_0'(|X_2(\theta,\theta+\eta)|) \big|
  & \leqslant  C_0 \|r_1 - r_2\|_{C^1} \big|K_0'(\tfrac12b|\sin\tfrac{\eta}{2}|)\big|.
\end{align*}
Putting together the foregoing estimates we get by straightforward computations
\begin{align}\label{eq:bound-k4-1-th}
  \big|\partial_\theta \big(\mathbf{k}_4(\theta,\theta+\eta)\big)\big|
  \leqslant &\, C \lVert r_1-r_2\rVert_{C^{2-\alpha}}
 \big|K_0'(\tfrac12b|\sin \tfrac{\eta}{2}|) \big| \, |\sin \tfrac{\eta}{2}|^{1-\alpha}
   \nonumber \\
  \triangleq  &C\,\lVert r_1-r_2\rVert_{C^{2-\alpha}}H_4\big(|\sin \tfrac{\eta}{2} |\big).
\end{align}
Hence, \eqref{eq:bound-k4-0-th}, \eqref{eq:bound-k4-1-th}, \eqref{ineq:K_0-diff-1-1},
\eqref{ineq:K_0-diff-1-2} and Lemma \ref{lem:int-operator} ensure that
\begin{align*}
  \Big\|\int_{\mathbb{T}} \mathbf{k}_4(\theta,\eta) \partial_\eta(R_1(\eta) e^{i\eta}) \dd \eta\Big\|_{C^{1-\alpha}}
  &\leqslant C \|r_1-r_2\|_{C^{2-\alpha}} \|\partial_\eta(R_1(\eta)e^{i\eta})\|_{C^{1-\alpha}} \\
  &\leqslant C \|r_1-r_2\|_{C^{2-\alpha}},
\end{align*}
and consequently,
\begin{align}\label{eq:I01-r2}
  \|\mathcal{I}_{00}[r_1](h) - \mathcal{I}_{00}[r_2](h)\|_{C^{1-\alpha}}\leqslant C\lVert r_1-r_2\rVert_{C^{2-\alpha}}
  \lVert h\rVert_{C^{2-\alpha}}.
\end{align}
For $\mathcal{I}_{01}[r](h)$ given by \eqref{Pipaa1}, using \eqref{es:J1} and the estimates that
$\|R_i(\theta)e^{i\theta}\|_{C^1(\T)}\leqslant C$ and
\begin{align*}
  \Big\lVert\partial_{\theta}(R_1(\theta)e^{i\theta})-\partial_{\theta}(R_2(\theta)e^{i\theta})\Big\rVert_{C^{1-\alpha}}
  \leqslant C\lVert r_1-r_2\rVert_{C^{2-\alpha}},
\end{align*}
we find in a similar way as  for deriving \eqref{es:I01-r}
\begin{equation}\label{es:I01diff}
\begin{split}
  \|\mathcal{I}_{01}[r_1](h) - \mathcal{I}_{01}[r_2](h)\|_{C^{1-\alpha}}
  & \leqslant C\lVert h\rVert_{C^{2-\alpha}} \lVert r_1-r_2\rVert_{C^{2-\alpha}}  \\
  & \quad +C\sum_{k=2}^{3}\lVert \mathcal{I}_{0k}[r_1](h)-\mathcal{I}_{0k}[r_2](h)\rVert_{C^{1-\alpha}}.
\end{split}
\end{equation}
From \eqref{P-loi}, we denote
\begin{align}\label{es:I02-1}
  \lVert \mathcal{I}_{02}[r_1](h)-\mathcal{I}_{02}[r_2](h)\rVert_{C^{1-\alpha}}
  = \Big\|\int_{\T} \mathbf{k}_5(\theta,\eta)\dd\eta \Big\|_{C^{1-\alpha}},
\end{align}
where
\begin{align*}
  \mathbf{w}_{2j}(\theta,\eta) \triangleq h(\eta)-\tfrac{R_j(\eta)\cos (\theta-\eta)}{R_j(\theta)}h(\theta),\quad j=1,2,
\end{align*}
and
\begin{align*}
  \mathbf{k}_5(\theta,\eta)\triangleq
  \nabla_{\mathbf{x}} K_0\big(X_1(\theta,\eta)\big) \mathbf{w}_{21}(\theta,\eta)
  - \nabla_{\mathbf{x}} K_0\big(X_2(\theta,\eta)\big) \mathbf{w}_{22}(\theta,\eta).
\end{align*}
We rewrite $\mathbf{k}_5(\theta,\theta+\eta)$ as follows
\begin{align*}
  \mathbf{k}_5(\theta,\theta+\eta) & =
  \Big(K_0' (|X_1(\theta,\theta+\eta)|) - K_0'(|X_2(\theta,\theta+\eta)|)\Big)
  \tfrac{X_1(\theta,\theta+\eta)}{|X_1(\theta,\theta+\eta)|} \mathbf{w}_{21}(\theta,\theta+\eta) \\
  & \quad +  K_0'(|X_2(\theta,\theta+\eta)|) \Big( \tfrac{X_1(\theta,\theta+\eta)}{|X_1(\theta,\theta+\eta)|}
  - \tfrac{X_2(\theta,\theta+\eta)}{|X_2(\theta,\theta+\eta)|}\Big)\,\mathbf{w}_{21}(\theta,\theta+\eta) \\
  & \quad + \nabla_{\mathbf{x}} K_0\big( |X_2(\theta,\theta+\eta|)\big)\,\Big(\mathbf{w}_{21}(\theta,\theta+\eta)
  - \mathbf{w}_{22}(\theta,\theta+\eta) \Big).
\end{align*}
By the identity
\begin{align*}
  &\mathbf{w}_{21}(\theta,\theta+\eta)-\mathbf{w}_{22}(\theta,\theta+\eta)
  = h(\theta)\cos (\eta)\Big(\tfrac{R_2(\theta+\eta)}{R_2(\theta)}-\tfrac{R_1(\theta+\eta)}{R_1(\theta)}\Big)\\
  &=h(\theta)\cos (\eta)
  \tfrac{R_1(\theta)\big((R_2-R_1)(\theta+\eta)-(R_2-R_1)(\theta)\big)
  +\big(R_2(\theta)-R_1(\theta)\big)\big(R_1(\theta)-R_1(\theta+\eta)\big)}{R_1(\theta)R_2(\theta)},
\end{align*}
we deduce that
\begin{align}\label{ineq:w1-w2-0-th}
  |\mathbf{w}_{21}(\theta,\theta+\eta)-\mathbf{w}_{22}(\theta,\theta+\eta)|\leqslant
  C\lVert h\rVert_{L^{\infty}}\lVert r_1-r_2\rVert_{C^1}\big|\sin \tfrac{\eta}{2}\big|,
\end{align}
and
\begin{align}\label{ineq:w1-w2-1-th}
  \Big|\partial_{\theta}\big(\mathbf{w}_{21}(\theta,\theta+\eta)\big)- \partial_\theta\big(\mathbf{w}_{22}(\theta,\theta+\eta)\big)\Big|
  \leqslant C\lVert h\rVert_{C^1}\lVert r_1-r_2\rVert_{C^{2-\alpha}}\big|\sin \tfrac{\eta}{2}\big|^{1-\alpha}.
\end{align}
Arguing as in  \eqref{est:k_2} and \eqref{eq:bound-k4-1-th}, and using \eqref{ineq:w-0-th}, \eqref{ineq:w1-w2-0-th} and Lemma \ref{lem-high-deriv}-(1), we infer that
\begin{align*}
  |\mathbf{k}_5(\theta,\theta+\eta)|\leqslant &\, C \lVert r_1-r_2\rVert_{C^1}\lVert h\rVert_{C^1}
  \Big(\big|K_0'(b|\sin \tfrac{\eta}{2}|)\big| |\sin \tfrac{\eta}{2}|
  + \big| K''_0(b|\sin \tfrac{\eta}{2}|)\big|\,|\sin \tfrac{\eta}{2}|^2\Big)\\
  \leqslant&\, C \lVert r_1-r_2\rVert_{C^1}\lVert h\rVert_{C^1}
  \big|K_0'(\tfrac12b|\sin \tfrac{\eta}{2}|)\big| |\sin \tfrac{\eta}{2}|
  \\
  =& \,C \lVert r_1-r_2\rVert_{C^1}\lVert h\rVert_{C^1}H_3\big(|\sin \tfrac{\eta}{2}|\big).
\end{align*}
In a similar way to  \eqref{eq:par-k2} and \eqref{eq:bound-k4-1-th}, and after some tedious computations, we arrive at
\begin{align*}
  \big|\partial_\theta \big(\mathbf{k}_5(\theta,\theta+\eta)\big)\big| \leqslant & \,C\lVert r_1-r_2\rVert_{C^{2-\alpha}}
  \lVert h\rVert_{C^{2-\alpha}}\bigg( \sum_{j=1}^{3}\big|K_0^{(j)}(b|\sin \tfrac{\eta}{2}|)\big|\,
  |\sin \tfrac{\eta}{2}|^{j-\alpha} \bigg) \\
  \leqslant &\, C\lVert r_1-r_2\rVert_{C^{2-\alpha}}
  \lVert h\rVert_{C^{2-\alpha}}\big|K'_0(\tfrac12b|\sin \tfrac{\eta}{2}|)\big|\,
  |\sin \tfrac{\eta}{2}|^{1-\alpha} \\
  =&\,C \lVert r_1-r_2\rVert_{C^{2-\alpha}} \lVert h\rVert_{C^{2-\alpha}}
  H_4\big(|\sin \tfrac{\eta}{2}|\big).
\end{align*}
Hence, \eqref{ineq:K_0-diff-1-1}, \eqref{ineq:K_0-diff-1-2} and Lemma \ref{lem:int-operator} implies that
\begin{align*}
  \Big\|\int_{\T}\mathbf{k}_5(\theta,\eta)\dd\eta \Big\|_{C^{1-\alpha}}
  \leqslant C \|r_1-r_2\|_{C^{2-\alpha}} \|h\|_{C^{2-\alpha}}.
\end{align*}
Putting it together  with \eqref{es:I02-1} yields
\begin{align}\label{es:I02-2}
  \|\mathcal{I}_{02}[r_1](h) - \mathcal{I}_{02}[r_2](h)\|_{C^{1-\alpha}} \leqslant C \|r_1 -r_2\|_{C^{2-\alpha}}
  \|h\|_{C^{2-\alpha}}.
\end{align}
\vskip0.5mm
\noindent The estimate of $\mathcal{I}_{03}[r_1](h)-\mathcal{I}_{03}[r_2](h)$ is quite similar  to the preceding one, using in particular \eqref{es:int-k3},
and we shall just state the final result omitting the details,
\begin{align}\label{es:I03}
  \|\mathcal{I}_{03}[r_1](h)-\mathcal{I}_{03}[r_2](h)\|_{C^{1-\alpha}}
  \leqslant C \|r_1-r_2\|_{C^{2-\alpha}} \|h\|_{C^{2-\alpha}}.
\end{align}
\vskip1mm
\noindent Next, the estimation of $\mathcal{I}_{1j}[r_1](h)-\mathcal{I}_{1j}[r_2](h)$ ($j=1,2,3$) is more straightforward, and one gets
\begin{align}\label{ineq:contin-I-1}
  \sum_{j=1,2,3}\lVert \mathcal{I}_{1j}[r_1](h)-\mathcal{I}_{1j}[r_2](h)\rVert_{C^{1-\alpha}}
  \leqslant C \lVert r_1-r_2\rVert_{C^{2-\alpha}} \lVert h\rVert_{C^{2-\alpha}}.
\end{align}
Below, we only give the proof of the estimate for $\mathcal{I}_{11}$,
since the remaining terms  are similar.
Indeed, noting from   \eqref{decom:parI1} that
\begin{align*}
  \mathcal{I}_{11}[r_1](h)(\theta) & - \mathcal{I}_{11}[r_2](h)(\theta) = \int_0^{2\pi}
  \int_{R_2(\eta)}^{R_1(\eta)}\nabla_{\mathbf x} K_1\big(R_1(\theta) e^{i\theta},
  \rho e^{i\eta}\big)  \rho \dd\rho \dd\eta \cdot \partial_\theta \Big(\tfrac{h(\theta)}{R_1(\theta)}e^{i\theta} \Big) \\
  &  + \big(R_1(\theta)e^{i\theta} - R_2(\theta) e^{i\theta} \big)\cdot \Pi(\theta)
  \cdot \partial_\theta\Big(\tfrac{h(\theta)}{R_1(\theta)}e^{i\theta} \Big)  \\
  & + \int_0^{2\pi} \int_0^{R_2(\eta)} \nabla_{\mathbf{x}} K_1\big(R_2(\theta)e^{i\theta},\rho e^{i\eta}\big)\,\rho\dd \rho \dd\eta
  \cdot \Big(\partial_\theta\Big(\tfrac{h(\theta)}{R_1(\theta)}e^{i\theta}\Big)
  -\partial_\theta\Big(\tfrac{h(\theta)}{R_2(\theta)}e^{i\theta}\Big) \Big),
\end{align*}
with
\begin{align*}
  \theta\in\mathbb{R}\mapsto \Pi(\theta) \triangleq \int_0^1 \int_0^{2\pi} \int_0^{R_2(\eta)} \nabla_{\mathbf{x}}^2
  K_1\big(s R_1(\theta)e^{i\theta} +(1-s)R_2(\theta)e^{i\theta}, \rho e^{i\eta} \big)
  \rho\, \dd \rho \dd \eta\dd s \in C^1(\T),
\end{align*}
and using the $C^4_{\mathrm{loc}}$-smoothness of $K_1$ yields
\begin{align*}
  \|\mathcal{I}_{11}[r_1](h) - \mathcal{I}_{11}[r_2](h)\|_{C^{1-\alpha}(\T)} \leqslant C \|r_1 - r_2\|_{C^{2-\alpha}} \|h\|_{C^{2-\alpha}}.
\end{align*}
\vskip0.5mm
\noindent Therefore, gathering the above estimates we conclude \eqref{ineq:contin-whole},
allowing to get the desired  result on the the continuity of $\partial_r F(\Omega,r)$.
\\[1mm]
\textbf{(3)} Since $\partial_\Omega \partial_r F(\Omega,r) h(\theta) = h'(\theta)$, then the  regularity result follows immediately.
\end{proof}

\subsection{Spectral study}\label{subsec:spect-study}
In this subsection we focus on the spectral study of the linearized operator  at zero,
given by $\partial_rF(\Omega,0)$.
Consider
\begin{align*}
  \theta\in\R\mapsto h(\theta)=\sum\limits_{n=1}^{\infty}a_n\cos (nm\theta)\in \mathbf{X}_m,\,\,a_n\in \R.
\end{align*}
Then, according to \eqref{exp:V0} and \eqref{def:lambda-nb}-\eqref{exp:par-Vr=0}, we have
\begin{align}\label{eq:F-h-general}
  \partial_{r}F(\Omega,0) h(\theta)
  & = \big(\Omega + V[0] \big)h'(\theta) + \mathcal{L}[0](h')(\theta) \nonumber \\
  & = - \sum_{n=1}^{\infty}a_n \big(\Omega-\Omega_{nm,b}\big)nm \sin(nm\theta),
\end{align}
where $\Omega_{n,b}$ ($n\in \mathbb{N}^\star$) satisfies
\begin{equation}\label{eq:Omega-whole}
	\Omega_{n,b}= \Omega_{n,b}^0 + \Omega_{n,b}^1,
\end{equation}
with
\begin{equation}\label{def:tild-Omeg}
\begin{split}
  \Omega_{n,b}^0 & \triangleq  \int_{\T}K_0(2b|\sin \tfrac{\eta}{2}|)\cos \eta\,\dd \eta
  -\int_{\T} K_0(2b|\sin \tfrac{\eta}{2}|) \cos (n\eta)\dd \eta \\
  & = \lambda_{1,b} - \lambda_{n,b},
\end{split}
\end{equation}
and
\begin{align}\label{def:bar-Omeg}
  \Omega_{n,b}^1 \triangleq & - b^{-1} \int_0^{2\pi} \int_0^b \partial_{\rho_1}G_1(b,0,\rho,\eta) \rho \dd \rho \dd \eta
  -\int_{\T} K_1(b,b e^{i\eta})\cos (n\eta) \dd \eta.
\end{align}
In particular, if $K(\mathbf{x},\mathbf{y}) = K_0(|\mathbf{x}-\mathbf{y}|)$,
then $\Omega_{n,b} = \Omega_{n,b}^0 = \lambda_{1,b} -\lambda_{n,b}$ with $\lambda_{n,b}$ given by \eqref{lambda_n}.
\\[1mm]
Lemma \ref{lem:lamb-n} and the results in Section \ref{sec:phi_n} imply the following crucial properties of $\Omega_{n,b}^0$.
\begin{lemma}\label{lem:Omega}
Let $K_0$ be a smooth function satisfying the assumptions $(\mathbf{A}1)$-$(\mathbf{A}2)$, see \eqref{eq:K0prim} and \eqref{cond:K0}.
Then the following statements hold true.
\begin{enumerate}[(1)]
\item For any $n\in \mathbb{N}^\star$, we have
\begin{align*}
  \lambda_{1,b} - \frac{8n^2}{4n^2-1} \int_0^\infty \frac{b}{n^2+(bx)^2}\dd \mu(x)
  \leqslant \Omega_{n,b}^0
  \leqslant \lambda_{1,b} -\frac{8n^2}{4n^2+1} \int_0^\infty \frac{b}{n^2+(bx)^2} \dd \mu(x),
\end{align*}
and
\begin{align*}
  \lim_{n\to \infty} \Omega_{n,b}^0 = \int_{\T} K_0\big(2b|\sin \tfrac{\eta}{2}|\big) \cos \eta\, \dd \eta.
\end{align*}
\item The map $n\in \N^\star\mapsto \Omega_{n,b}^0$ is strictly increasing and
\begin{align*}
  \frac{1}{2}D_n\leqslant \Omega_{n+1,b}^0 - \Omega_{n,b}^0 \leqslant 4 D_n,
\end{align*}
with
\begin{align*}
  D_n \triangleq \int_0^{\infty}\frac{b}{n^2+(bx)^2}\frac{2n+1}{(n+1)^2+(bx)^2}\dd \mu(x).
\end{align*}
\end{enumerate}
\end{lemma}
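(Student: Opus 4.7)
The overall strategy is to transfer every claim about $\Omega_{n,b}^0=\lambda_{1,b}-\lambda_{n,b}$, see \eqref{def:tild-Omeg}, to a claim about the integral representation
\begin{equation*}
  \lambda_{n,b}=2\int_0^\infty \phi_n(bx)\,\tfrac{\dd \mu(x)}{x}
\end{equation*}
supplied by Lemma \ref{lem:lamb-n}, and then to plug in the sharp pointwise estimates on $\phi_n$ and on $\phi_n-\phi_{n+1}$ that were established in Section \ref{sec:phi_n}. The assumptions $(\mathbf{A}1)$–$(\mathbf{A}2)$ ensure that $\lambda_{1,b}$ is finite, which will serve as the master integrability bound throughout.

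For part (1), my plan is to substitute Lemma \ref{lem:bound-varphi} in its natural (unrescaled) form
\begin{equation*}
  \tfrac{4n^2}{4n^2+1}\,\tfrac{bx}{n^2+(bx)^2}\leqslant \phi_n(bx)\leqslant \tfrac{4n^2}{4n^2-1}\,\tfrac{bx}{n^2+(bx)^2}
\end{equation*}
directly into the factorization of $\lambda_{n,b}$. The factor $bx$ in the numerator cancels the $1/x$ coming from $\dd\mu(x)/x$, producing
\begin{equation*}
  \tfrac{8n^2}{4n^2+1}\int_0^\infty \tfrac{b}{n^2+(bx)^2}\,\dd\mu(x)\leqslant \lambda_{n,b}\leqslant \tfrac{8n^2}{4n^2-1}\int_0^\infty \tfrac{b}{n^2+(bx)^2}\,\dd\mu(x),
\end{equation*}
and subtracting from $\lambda_{1,b}$ yields the advertised two-sided bound on $\Omega_{n,b}^0$. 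For the limit I would show $\lambda_{n,b}\to 0$ by dominated convergence: Proposition \ref{thm:mono} supplies the monotone majorant $\phi_n(bx)\leqslant \phi_1(bx)$, so $\phi_n(bx)/x$ is dominated by $\phi_1(bx)/x$, whose integral against $\dd\mu$ equals $\lambda_{1,b}/2<\infty$; the pointwise vanishing $\phi_n(bx)\to 0$ for fixed $bx>0$ is immediate from the upper bound of Lemma \ref{lem:bound-varphi}.

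For part (2), I would exploit the telescoping identity
\begin{equation*}
  \Omega_{n+1,b}^0-\Omega_{n,b}^0=\lambda_{n,b}-\lambda_{n+1,b}=2\int_0^\infty \bigl(\phi_n(bx)-\phi_{n+1}(bx)\bigr)\,\tfrac{\dd\mu(x)}{x},
\end{equation*}
which reduces the monotonicity and the quantitative sandwich to corresponding properties of $\chi_n=\phi_n-\phi_{n+1}$. The strict positivity of $\chi_n$ established in Proposition \ref{thm:mono} (together with $\mu\not\equiv 0$, forced by $-K_0'$ being a nonzero completely monotone function) gives strict monotonicity of $n\mapsto \Omega_{n,b}^0$. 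Inserting the pointwise bounds of Proposition \ref{prop:chi-n} and again cancelling the $bx$ factor in the numerator against $1/x$ leaves
\begin{equation*}
  \int_0^\infty \tfrac{(2n+1)b}{(n^2+(bx)^2)\bigl((n+1)^2+(bx)^2\bigr)}\,\dd\mu(x)=D_n
\end{equation*}
multiplied by explicit universal constants, which delivers the desired sandwich.

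The main point to watch is purely one of bookkeeping, not of genuine difficulty: all the hard analysis — the Bessel-type ODE satisfied by $\phi_n$, the comparison principle, the monotonicity of $n\mapsto \phi_n$ and the precise lower/upper bounds on $\phi_n$ and $\phi_n-\phi_{n+1}$ — has already been carried out in Section \ref{sec:phi_n}. The only subtlety is the dominated convergence step in part (1), where the correct majorant is $\phi_1(bx)/x$ coming from the monotonicity in $n$; using instead a crude pointwise bound such as $\frac{b}{1+(bx)^2}$ would demand an extra moment hypothesis on $\mu$ that is not assumed here.
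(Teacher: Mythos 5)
Your proposal is correct and follows essentially the same route as the paper: insert the factorization of Lemma~\ref{lem:lamb-n} into $\Omega_{n,b}^0=\lambda_{1,b}-\lambda_{n,b}$, then invoke Lemma~\ref{lem:bound-varphi} for part~(1) and Proposition~\ref{prop:chi-n} together with Proposition~\ref{thm:mono} (and the nontriviality of $\mu$) for part~(2). One small remark on the closing paragraph: the claim that using $\frac{b}{1+(bx)^2}$ as a majorant ``would demand an extra moment hypothesis on $\mu$'' is not accurate. The lower bound of Lemma~\ref{lem:bound-varphi} at $n=1$ gives $\lambda_{1,b}\geqslant \frac{8}{5}\int_0^\infty \frac{b}{1+(bx)^2}\,\dd\mu(x)$, so $\frac{b}{1+(bx)^2}$ is $\mu$-integrable under the standing assumptions and is in fact pointwise comparable to $\phi_1(bx)/x$; either function works as a dominating envelope. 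This does not affect the validity of your proof.
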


\begin{proof}[Proof of Lemma \ref{lem:Omega}]
In light of \eqref{def:tild-Omeg} and Lemma \ref{lem:lamb-n},
we have a useful  formula for $\Omega_{n,b}^0$ in terms of $\phi_n$ given by \eqref{def:phi-n},
\begin{align}\label{eq:Omega-whole-s}
  \Omega_{n,b}^0 = \lambda_{1,b}-\lambda_{n,b}
  = 2 \int_0^\infty \big(\phi_1(b x)-\phi_n (b x) \big) \tfrac{\dd \mu(x)}{x}.
\end{align}
\vskip0.5mm
\noindent Hence, the statement $(1)$ follows directly from Lemma \ref{lem:bound-varphi}.\\
 As to the estimate of the point $(2)$, it  can be deduced from \eqref{eq:bound-diff-phi-n} and \eqref{eq:Omega-whole-s}.
In addition, since $\mu$ is a nonnegative measure and is not zero measure,
there exist some $0< d<\infty$ and $c_*>0$ such that $\mu([0,d])\geqslant c_* >0$.
Then, we obtain  the strict monotonicity of $(\Omega_{n,b}^0)_{n\in \mathbb{N}^\star}$, that is,
\begin{align}\label{eq:td-Omeg-lbd}
  \Omega_{n+1,b}^0 - \Omega_{n,b}^0 & \geqslant
  \frac{1}{2}\int_0^d \frac{b}{n^2+(bx)^2}\frac{2n+1}{(n+1)^2+(bx)^2}\dd \mu(x) \nonumber \\
  & \geqslant \frac{c_*}{2} \frac{b}{n^2+(bd)^2}\frac{2n+1}{(n+1)^2+(bd)^2}
  \geqslant \frac{c_*' }{n^3},
\end{align}
with $c_*'>0$ depending only on $c_*$, $d$ and $b$.
\end{proof}

\noindent Next, we intend  to  show the monotonicity of the sequence $(\Omega_{n,b})$ for large modes.
\begin{lemma}\label{lem:Omega-pertubative}
Consider the general case \eqref{case:K-2} with $K_0$ and $K_1$ satisfying the assumptions $ (\mathbf{A}\mathrm{1})$-$(\mathbf{A}\mathrm{4})$.
Then there exist $m_0\in\mathbb{N}^\star$ and $C>0$ such that for any $m\geqslant m_0$ and $ n\geqslant 1$,
\begin{align*}
  \Omega_{(n+1)m,b}-\Omega_{nm,b}\geqslant \frac{C}{(nm)^3}.
\end{align*}
In addition,
\begin{align*}
  \lim_{n\to \infty}\Omega_{nm,b}=\int_{\T}K_0\big(2b |\sin\tfrac{\eta}{2}|\big) \cos \eta\,\dd \eta
  - b^{-1} \int_0^{2\pi} \int_0^b \partial_{\rho_1}G_1(b,0,\rho,\eta) \, \rho \dd \rho \dd \eta.
\end{align*}
\end{lemma}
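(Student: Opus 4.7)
\textbf{Proof proposal for Lemma \ref{lem:Omega-pertubative}.} The plan is to split the spectrum into its completely monotone part and its perturbative part, treat them separately, and show the perturbative correction is dominated by the main lower bound for large $m$. Concretely, using \eqref{eq:Omega-whole}, for every $n\geqslant1$ and $m\geqslant1$ one writes
\begin{equation*}
  \Omega_{(n+1)m,b}-\Omega_{nm,b} = \bigl(\Omega_{(n+1)m,b}^{0}-\Omega_{nm,b}^{0}\bigr) + \bigl(\Omega_{(n+1)m,b}^{1}-\Omega_{nm,b}^{1}\bigr).
\end{equation*}

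For the first difference, I would telescope
\begin{equation*}
  \Omega_{(n+1)m,b}^{0}-\Omega_{nm,b}^{0} = \sum_{k=0}^{m-1}\bigl(\Omega_{nm+k+1,b}^{0}-\Omega_{nm+k,b}^{0}\bigr),
\end{equation*}
keep only the first term ($k=0$), which is the largest, and apply the quantitative lower bound \eqref{eq:td-Omeg-lbd} from Lemma \ref{lem:Omega}. This yields a constant $c_*'>0$, depending only on $b$ and on the measure $\mu$ (through $\mu([0,d])\geqslant c_*$ for some $d>0$), such that
\begin{equation*}
  \Omega_{(n+1)m,b}^{0}-\Omega_{nm,b}^{0}\geqslant \frac{c_*'}{(nm)^{3}}.
\end{equation*}
This is really the main analytic input and is where Proposition \ref{prop:chi-n} pays off; without the sharp decay rate $(2n+1)/[(n^2+x^2)((n+1)^2+x^2)]$ for $\phi_n-\phi_{n+1}$, one could not close the argument.

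For the second difference, the constant part of $\Omega_{n,b}^{1}$ in \eqref{def:bar-Omeg} cancels out, so only the Fourier coefficient term remains:
\begin{equation*}
  \Omega_{(n+1)m,b}^{1}-\Omega_{nm,b}^{1} = \int_{\T} K_1(b,be^{i\eta})\bigl(\cos(nm\eta)-\cos((n+1)m\eta)\bigr)\dd\eta.
\end{equation*}
By assumption $(\mathbf{A}3)$, the function $\eta\mapsto K_1(b,be^{i\eta})$ is $C^{k}$ on $\T$ with $k\geqslant4$, so after $k$ integrations by parts its Fourier coefficients decay at rate $O(N^{-k})$. Therefore
\begin{equation*}
  \bigl|\Omega_{(n+1)m,b}^{1}-\Omega_{nm,b}^{1}\bigr|\leqslant \frac{C}{(nm)^{k}}\leqslant \frac{C}{(nm)^{4}}\leqslant \frac{C}{m\,(nm)^{3}}.
\end{equation*}
Combining the two bounds, I would choose $m_0\in\N^\star$ so large that $C/m_0\leqslant c_*'/2$, and then for every $m\geqslant m_0$ and $n\geqslant1$
\begin{equation*}
  \Omega_{(n+1)m,b}-\Omega_{nm,b}\geqslant \frac{c_*'}{(nm)^{3}}-\frac{C}{m\,(nm)^{3}}\geqslant \frac{c_*'/2}{(nm)^{3}},
\end{equation*}
which is the desired estimate. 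The main obstacle is precisely to ensure that the perturbative correction decays one power faster than the main gap $\sim(nm)^{-3}$; this is exactly what assumption $(\mathbf{A}3)$ with $k\geqslant4$ gives us.

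Finally, the limit statement follows from two elementary passages to the limit. By Lemma \ref{lem:Omega}$(1)$, $\Omega_{nm,b}^{0}\to\int_{\T}K_0(2b|\sin(\eta/2)|)\cos\eta\,\dd\eta$ as $n\to\infty$, since $\phi_{nm}\to 0$ pointwise and dominated convergence applies via the estimates of Lemma \ref{lem:bound-varphi}. And by the Riemann--Lebesgue lemma applied to the continuous function $\eta\mapsto K_1(b,be^{i\eta})$, the oscillatory integral $\int_{\T}K_1(b,be^{i\eta})\cos(nm\eta)\dd\eta\to 0$, so $\Omega_{nm,b}^{1}\to -b^{-1}\int_0^{2\pi}\int_0^{b}\partial_{\rho_1}G_1(b,0,\rho,\eta)\,\rho\dd\rho\dd\eta$, which gives the claimed limit.
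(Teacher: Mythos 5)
Your proposal is correct and follows essentially the same strategy as the paper: split $\Omega_{nm,b}=\Omega^0_{nm,b}+\Omega^1_{nm,b}$, bound the completely monotone part from below via Lemma \ref{lem:Omega} and \eqref{eq:td-Omeg-lbd}, bound the perturbative Fourier coefficient by integration by parts, and then choose $m_0$ large; the limit statement is handled identically via Lemma \ref{lem:Omega} and Riemann--Lebesgue. The only genuine (and harmless) difference is in how the two pieces are balanced: you keep a single term of the telescoping sum so that $\Omega^0_{(n+1)m,b}-\Omega^0_{nm,b}\geqslant c_*'/(nm)^3$, and therefore need four integrations by parts on $K_1$ to push the error below $C/(nm)^4\leqslant C/(m(nm)^3)$; the paper instead sums all $m$ terms of the telescope to obtain the stronger lower bound $\geqslant c_*'/((n+1)^3m^2)$, a full factor of $m$ larger, and so only needs $K_1\in C^3(\T)$ to conclude. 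Both tradeoffs are covered by assumption $(\mathbf{A}3)$ with $k\geqslant4$, so your version is perfectly valid; the paper's is marginally more economical in regularity, while yours is slightly more direct to write down.
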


\begin{proof}[Proof of Lemma \ref{lem:Omega-pertubative}]
  Since the kernel  $K_1$ belongs to $C^3_{\mathrm{loc}}(\mathbf{D}^2)$ and
$\eta\mapsto K_1(b,be^{i\eta})$  to $C^3(\mathbb{T})$.
Then, using integration by parts we infer
\begin{equation}\label{eq:G1-est}
\begin{split}
  \Big|\int_{\T}K_1(b,b e^{i\eta})\cos (nm\eta) \dd \eta\Big|
  =&\Big|\frac{1}{(nm)^3}\int_0^{2\pi}\partial^3_\eta \big(K_1(b,b e^{i\eta})\big) \sin (nm\eta)\dd \eta\Big|  \\
  \leqslant &\frac{C_2}{(nm)^3},
\end{split}
\end{equation}
with some $C_2>0$.
Hence, in view of \eqref{eq:Omega-whole}, \eqref{def:bar-Omeg}, \eqref{eq:td-Omeg-lbd} and \eqref{eq:G1-est}, we find
\begin{align*}
  \Omega_{(n+1)m,b}-\Omega_{nm,b}= & \,\Omega_{(n+1)m,b}^0 - \Omega_{nm,b}^0
  + \Omega_{(n+1)m,b}^1 - \Omega_{nm,b}^1 \\
  \geqslant &  \sum_{k =nm}^{(n+1)m-1}\frac{c_*'}{k^3} - \frac{ 2 C_2}{(nm)^3}\\
  \geqslant & \frac{c_*' }{(n+1)^3 m^2} - \frac{ C_2 }{(nm)^3}\cdot
\end{align*}
Choosing some $m_0>\frac{20 C_2}{c_*'}$, we show the first result on the lower bound of $ \Omega_{(n+1)m,b}-\Omega_{nm,b}$.
\\[0.5mm]
Next, using Riemann-Lebesgue's lemma combined with Lemma \ref{lem:Omega} allow to get the convergence result. This ends the proof of the desired result.
\end{proof}

\noindent Now, we are in a position to show the main result on the spectral study of $\partial_r F(\Omega,0)$,
by showing the validity of all  the requirements in  Crandall-Rabinowitz's theorem. The function spaces that will be used below are described in \eqref{def:Xm} and \eqref{def:Ym}.
\begin{proposition}\label{propos:bifurcation}
Assume that either the assumptions of Theorem $\ref{thm:main}$ or those of Theorem $\ref{thm:perturbative}$ are satisfied.
Then the following statements hold true.
\begin{enumerate}[(1)]
\item
The kernel of $\partial_rF(\Omega,0):\mathbf{X}_m\to \mathbf{Y}_m$ is non-trivial if and only if $\Omega=\Omega_{\ell m,b}$
for some $\ell \in \mathbb{N}^\star$.
In this case, it is a one-dimensional vector space  generated by $\theta\mapsto \cos (\ell m\theta)$.
\item
The range of $\partial_r F(\Omega_{\ell m,b},0)$ is closed and is of co-dimension one.
It is given by
\begin{align*}
  Range(\partial_r F(\Omega_{\ell m,b},0))=\Big\{r\in C^{1-\alpha}(\mathbb{T}): r(\theta)
  =\sum_{n\geqslant 1, n\neq \ell}a_n \sin (nm\theta),a_n\in \R\Big\}.
\end{align*}
\item
Transversality condition:
\begin{align*}
  \partial_\Omega \partial_r F(\Omega_{\ell m,b},0)(\cos (\ell m\theta)) \not\in R(\partial_r F(\Omega_{\ell m,b},0)).
\end{align*}
\end{enumerate}
\end{proposition}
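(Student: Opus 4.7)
\textbf{Proof proposal for Proposition \ref{propos:bifurcation}.}
The starting point is the Fourier expansion \eqref{eq:F-h-general}: for $h(\theta)=\sum_{n\geqslant1}a_n\cos(nm\theta)\in\mathbf{X}_m$,
\begin{equation*}
\partial_r F(\Omega,0)h(\theta)=-\sum_{n\geqslant 1}a_n\,(\Omega-\Omega_{nm,b})\,nm\,\sin(nm\theta),
\end{equation*}
so $\partial_r F(\Omega,0)$ acts diagonally in the Fourier basis. For part (1), the kernel is non-trivial exactly when $\Omega=\Omega_{nm,b}$ for at least one $n\in\mathbb{N}^\star$. Under the assumptions of Theorem \ref{thm:main}, Lemma \ref{lem:Omega}(2) provides the strict monotonicity of $n\mapsto \Omega_{nm,b}$, so exactly one such $n=\ell$ exists and the kernel is spanned by $\theta\mapsto\cos(\ell m\theta)$. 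Under the assumptions of Theorem \ref{thm:perturbative}, one invokes instead Lemma \ref{lem:Omega-pertubative} and chooses $m\geqslant m_0$ so that the sequence $(\Omega_{nm,b})_{n\geqslant 1}$ is strictly increasing, giving the same conclusion.

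For part (2), fix $\Omega=\Omega_{\ell m,b}$ and set $\sigma_n\triangleq -\frac{1}{(\Omega_{\ell m,b}-\Omega_{nm,b})\,nm}$ for $n\neq \ell$. Given
$g(\theta)=\sum_{n\geqslant 1,\,n\neq \ell}b_n\sin(nm\theta)\in\mathbf{Y}_m$, the only candidate preimage is
\begin{equation*}
h(\theta)=\sum_{n\geqslant1,\,n\neq \ell}\sigma_n\,b_n\,\cos(nm\theta).
\end{equation*}
The point is to show the map $g\mapsto h$ is bounded from $C^{1-\alpha}(\T)\cap\mathbf{Y}_m$ to $C^{2-\alpha}(\T)\cap\mathbf{X}_m$. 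I would apply the Mikhlin-type multiplier theorem on the torus (Lemma \ref{lem:multiplier-lemma}): this requires verifying that $(nm\,\sigma_n)_{n\neq \ell}$ and its discrete first/second differences satisfy the appropriate bounds, so the multiplier is bounded on $C^{1-\alpha}(\T)$. Since Lemma \ref{lem:Omega}(1) yields
$\Omega_{nm,b}\to \int_\T K_0(2b|\sin(\eta/2)|)\cos\eta\,\dd\eta$ as $n\to\infty$, together with the asymptotic expansion of Corollary \ref{cor:asym} (which controls $\Omega_{(n+1)m,b}-\Omega_{nm,b}$ and its discrete differences), the sequence $nm\,\sigma_n$ is bounded and of bounded variation with the Mikhlin-type scaling. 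This identifies the range with the stated subspace; closedness and co-dimension one follow since we have explicitly exhibited a complement (the line spanned by $\sin(\ell m\theta)$ inside $\mathbf{Y}_m$). The hardest and most delicate step is exactly this multiplier verification, since $\sigma_n$ is only given implicitly through the spectral factorization, so the universal asymptotics developed in Section \ref{sec:phi_n} are essential here.

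Part (3) is direct: from \eqref{eq:F_Lin_r} one has $\partial_\Omega\partial_r F(\Omega,0)h(\theta)=h'(\theta)$, hence
\begin{equation*}
\partial_\Omega\partial_r F(\Omega_{\ell m,b},0)\bigl(\cos(\ell m\theta)\bigr)=-\ell m\,\sin(\ell m\theta),
\end{equation*}
which contains the $\sin(\ell m\theta)$ mode and therefore does not belong to the range characterized in (2). This gives the transversality condition and completes the verification of the Crandall--Rabinowitz hypotheses.
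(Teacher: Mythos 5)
Parts (1) and (3) match the paper's argument exactly: the diagonal action in Fourier, the strict monotonicity from Lemmas~\ref{lem:Omega} and \ref{lem:Omega-pertubative}, and the transversality from $\partial_\Omega\partial_r F(\Omega,0)h=h'$.

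In part (2), you correctly identify that everything reduces to the Mikhlin-type condition $\sup_{k}k\,|\Omega_{k+1,b}-\Omega_{k,b}|<\infty$ (this is the precise hypothesis of Lemma~\ref{lem:multiplier-lemma}; you write ``first/second differences'', but only first differences are needed). However, you verify it by a different route than the paper, and that route has a gap. The paper establishes the $C/k$ bound directly from the defining integral $\Omega_{k+1,b}-\Omega_{k,b}=2\int_0^\pi K_0(2b\sin\eta)\,e^{2ik\eta}(e^{2i\eta}-1)\,\dd\eta$ by a single integration by parts, which brings out the $1/k$ and leaves integrals controlled precisely by assumption~$(\mathbf{A}2)$ and Lemma~\ref{lem-high-deriv}. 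You instead propose to read the bound off the asymptotic expansion of Corollary~\ref{cor:asym}. This is problematic for two reasons. First, the error term there is bounded by $\frac{C_{N,\delta}}{n^{2N+5/3}}\int_0^\infty\frac{x^{\delta-1}}{1+bx/n}\,\dd\mu(x)$, and this integral is not guaranteed to be finite under $(\mathbf{A}1)$--$(\mathbf{A}2)$ alone (nothing in the standing assumptions controls $\mu$ near $0$ or at infinity in the needed way), so the inequality can be vacuous; using it to bound $k|\Omega_{k+1,b}-\Omega_{k,b}|$ uniformly requires additional growth assumptions on $\mu$ that the theorem does not impose. Second, Corollary~\ref{cor:asym} only expands $\lambda_{n,b}$, hence $\Omega^0_{nm,b}$; in the perturbative setting of Theorem~\ref{thm:perturbative} you would still need to handle the $\Omega^1_{nm,b}$ contribution, which the paper does via the integration-by-parts decay estimate~\eqref{eq:G1-est}, and which your argument omits. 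To repair part~(2) you should replace the appeal to Corollary~\ref{cor:asym} by the direct integration-by-parts estimate on $\Omega_{k+1,b}-\Omega_{k,b}$ (together with~\eqref{eq:G1-est} for the $K_1$ part), as in the paper.
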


\begin{proof}[Proof of Proposition \ref{propos:bifurcation}]
\textbf{(1)} The proof of statement $(1)$ is a direct consequence of  \eqref{eq:F-h-general} and the strict monotonicity of $n\in\N^\star\mapsto \Omega_{nm,b}$, seen in Lemmas \ref{lem:Omega}.
\\	
\textbf{(2)} From \eqref{eq:F-h-general}, it is obvious to see that
\begin{align*}
  R(\partial_rF(\Omega_{\ell m,b},0))\subset\Big\{ r\in C^{1-\alpha}(\mathbb{T}): r(\theta)
  =\sum_{n\geqslant 1, n\ne \ell}a_{n}\sin (nm\theta),a_n\in \R\Big\}.
\end{align*}
Next we prove the converse inclusion relationship. For any $r\in C^{1-\alpha}(\T)$ satisfying
\begin{align*}
  r(\theta)=\sum\limits_{n\geqslant 1\atop  n\ne \ell}b_n\sin (nm\theta),
\end{align*}
we have to find some $h\in \mathbf{X}_m$ such that $\partial_{r}F(\Omega_{\ell m,b},0)h = r$. In view of \eqref{eq:F-h-general}, we formally get
\begin{align*}
 h(\theta)= \sum_{n\geqslant1\atop n\neq \ell}\frac{b_n}{(\Omega_{nm,b}-\Omega_{\ell m,b})nm}\cos (nm\theta),
\end{align*}
and we need to prove that $h\in C^{2-\alpha}(\T).$ First, we write
\begin{align}\label{eq:Omeg-targ}
 h(\theta)
  = \sum_{k\geqslant1\atop k\neq \ell m} \frac{1}{\Omega_{k,b}- \Omega_{\ell m, b}} \widetilde{b}_k \cos(k\theta),
\end{align}
where
\begin{align*}
  \widetilde{b}_k \triangleq
  \begin{cases}
    \frac{b_{k/m}}{k},\quad & \textrm{for}\;\; k \in  m\mathbb{N}^\star \\
    0,\quad & \textrm{for}\;\; k \notin  m\mathbb{N}^\star.
  \end{cases}
\end{align*}
Notice that one easily gets that
\begin{align*}
  \theta\mapsto \sum_{k\geqslant 1\atop k\neq m\ell} \widetilde{b}_k \cos(k\theta)
  \in C^{2-\alpha}(\T).
\end{align*}
Since $(\Omega_{k,b})_{k\in \mathbb{N}^\star}$ is strictly increasing with respect to $k$, we have
\begin{align*}
  \sup_{k\neq m\ell} \frac{1}{|\Omega_{k,b} - \Omega_{\ell m,b}|}<\infty,
\end{align*}
and
\begin{align*}
  \sup_{k\neq m\ell\atop k\neq m\ell-1 } \Big|\frac{1}{(\Omega_{k+1,b} - \Omega_{\ell m, b}) (\Omega_{k,b} - \Omega_{\ell m,b})} \Big|
  <\infty.
\end{align*}
In order to show $h\in C^{2-\alpha}(\T)$, by applying Lemma \ref{lem:multiplier-lemma},
we only need to prove that
\begin{align}\label{eq:cond-Omeg}
  \sup_{k\geqslant 1} k |\Omega_{k+1,b}-\Omega_{k,b}| < \infty.
\end{align}
Indeed, if the case \eqref{case:K-1} is considered, by virtue of \eqref{def:tild-Omeg}, \eqref{lambda_n},
the monotonicity property of $|K_0|$ and Lemma \ref{lem:int}, we infer from integration by parts  that
\begin{align*}
  |\Omega_{k+1,b} - \Omega_{k,b}|
  = &\, 2 \Big|\int_0^\pi K_0(2 b\sin \eta)e^{2i k\eta}(e^{2i\eta}-1)\dd \eta\Big| \\
  \leqslant & \frac{4}{k}\int_0^{\frac\pi2} |K_0(2b\sin \eta )|\dd \eta
  +\frac{4 b}{k}\int_0^{\frac\pi2}|K'_0(2 b\sin \eta)|\, |e^{2i\eta} -1| \dd \eta \\
  \leqslant & \frac{4}{k} \int_0^{\frac{\pi}{2}} |K_0(\tfrac{4}{\pi} b \eta)| \dd \eta
  +\frac{4 b}{k} \int_0^{\frac{\pi}{2}} |K'_0(\tfrac{4}{\pi} b \eta)| \eta \dd \eta\\
  \leqslant & \frac{C}{k},
\end{align*}
where in the last line, we have applied  \eqref{cond:K0} and Lemma \ref{lem-high-deriv}.
For   the general case \eqref{case:K-2}, we combine    \eqref{eq:Omega-whole}, \eqref{eq:G1-est} and the above inequality,
leading to  \eqref{eq:cond-Omeg}. Hence, we conclude that  $h\in C^{2-\alpha}(\T)$ and the proof of the range characterization follows immediately.
\\[0.5mm]
\textbf{(3)} Due to the fact
$\partial_\Omega \partial_r F(\Omega_{\ell m,b},0)h = h'$,
we find
\begin{align*}
  \partial_\Omega \partial_r F(\Omega_{\ell m,b},0)\cos (\ell m\theta) & = -\ell m \sin (\ell m \theta) \\
  & \not\in Range\big(\partial_rF(\Omega_{\ell m,b},0)\big),
\end{align*}
as claimed. This ends the proof of Proposition \ref{propos:bifurcation}.
\end{proof}

\section{Applications to geopghyscial flows}\label{sec:example}
In this section, we will examine special cases of \eqref{eq:ASE}-\eqref{eq:psi} covering crucial  models encountered in geophysical flows.
Through this exploration, we will  observe that  our comprehensive framework  often leads to   the known  results on the construction of V-states in the simply connected cases. Furthermore, we will derive new identities on special functions as a byproduct of our asymptotic description of the spectrum seen in Corollary \ref{cor:asym}.\subsection{2D Euler equation in the whole space}
Consider the 2D incompressible Euler equation in the whole plane. It corresponds to  the equation \eqref{eq:ASE}
with $\mathbf{D} = \mathbb{R}^2$ and $\psi = (-\Delta)^{-1}\omega$. Equivalently,
the stream function $\psi$ satisfies \eqref{eq:psi} with
\begin{align*}
  K(\mathbf{x},\mathbf{y})= K_0 (|\mathbf{x}-\mathbf{y}|) = - \tfrac{1}{2\pi} \log |\mathbf{x} -\mathbf{y}|.
\end{align*}
Although $K_0(t) = -\frac{1}{2\pi} \log t$, $t>0$ does not have a definite sign,
the function $-K'_0(t)=\frac{1}{2\pi}\frac{1}{t} $ is completely monotone
which has the following representation
\begin{align*}
  - K_0'(t) = \frac{1}{2\pi} \frac{1}{t} = \frac{1}{2\pi} \int_0^\infty e^{-tx}\dd x = \int_0^\infty e^{-tx} \dd \mu(x) ,
\end{align*}
that is, the associated non-negative measure $\mu$ is given by $\dd \mu(x) = \frac{1}{2\pi} \dd x$. Moreover,
$K_0$  satisfies the assumption  \eqref{cond:K0} with any $\alpha\in(0,1)$.
Thus the assumptions $(\mathbf{A}1)$-$(\mathbf{A}2)$ are verified and Theorem \ref{thm:main}
can be applied in this case. This gives Burbea result proved in \cite{Burbea82}. On the other hand,
in view of \eqref{spectral:Omega-whole} (or \eqref{eq:Omega-whole-s}) and through   straightforward calculus, using  for instance the identity   \cite[4.384]{GR15} we have
\begin{equation}\label{eq:lambd-n-euler}
\begin{split}
  \lambda_{n,1} = \int_0^{2\pi} K_0\big(|2\sin \tfrac{\eta}{2} |\big) \cos (n\eta) \dd \eta
  = & -\frac{1}{2\pi}\int_0^{2\pi}\log \big(\sin \tfrac{\eta}{2}\big) \cos (n\eta) \dd \eta \\
  =& \, \frac{1}{2n}
\end{split}
\end{equation}
and
\begin{align}\label{spectrum:Euler}
  \Omega_{n,1}^0 = \lambda_{1,1} - \lambda_{n,1} = \tfrac{1}{2} \big(1-\tfrac{1}{n}\big).
\end{align}
This is identical  to the result in \cite{Burbea82,HMV13}.
Using Corollary \ref{cor:asym}, we deduce that
\begin{align*}
  \lambda_{n,1}=\sum_{k=0}^{N}\frac{A_k}{n^{2k+1}}+\varepsilon_{n,N},
\end{align*}
where $A_k$ is independent of $n$ given by (the function  $\Psi_k$ is defined by \eqref{eq:psi-rela1}-\eqref{eq:psi-rela2})
\begin{align}\label{def:ck}
  A_k=\frac{1}{\pi}\int_0^{\infty}\frac{\Psi_k(x)}{x}\dd x,\quad k\in \mathbb{N},
\end{align}
and
\begin{align*}
  |\varepsilon_{n,N}|\leqslant C_{N,\delta} \frac{1}{n^{2N+\frac{5}{3}}}
  \int_0^\infty \frac{x^{\delta-1}}{1+ \frac{x}{n}} \dd x
  \leqslant C_{N,\delta}' \frac{1}{n^{2N + \frac{5}{3}-\delta}},\quad \delta\in (0,\tfrac{1}{3}).
\end{align*}
From \eqref{eq:lambd-n-euler}, we  infer that
\begin{align}\label{eq:ck}
  A_0=\frac{1}{2},\quad A_k=0,\quad \forall k\in \mathbb{N}^\star.
\end{align}
Note that the relations $A_0 =\frac{1}{2}$ and $A_1=A_2=0$ can be easily justified from the formula \eqref{def:ck}, 

\subsection{gSQG equation in the whole space}
The generalized surface quasi-geostrophic equation  in the plane, denoted by  gSQG equation, corresponds to
$$K(\mathbf{x},\mathbf{y}) = K_0(|\mathbf{x}-\mathbf{y}|) = {c_\beta} |\mathbf{x} - \mathbf{y}|^{-\beta},\quad c_\beta= \tfrac{ \Gamma(\frac{\beta}{2})}{\pi2^{2-\beta}\Gamma(1-\frac{\beta}{2})},\quad \beta\in(0,1)\cdot $$
Obviously, the function $t\in(0,\infty)\mapsto K_0(t)={c_\beta}t^{-\beta}$ satisfies the fact that $-K_0' $
is completely monotone with
\begin{align*}
  -K_0'(t) = {\beta c_\beta} t^{-\beta -1} = \frac{c_\beta}{ \Gamma(\beta)} \int_0^\infty e^{-tx} x^\beta
  \dd x = \int_0^\infty e^{-tx} \dd \mu(x),
\end{align*}
with the nonnegative measure $\mu$ given by $\dd \mu(x) = \frac{c_\beta}{ \Gamma(\beta)} x^\beta \dd x$.
Besides, the condition \eqref{cond:K0} holds true  for any $\alpha\in (0,1-\beta]$.
Consequently,  Theorem \ref{thm:main} can be applied in this case leading to the result of \cite{HH15}, with $b=1$.
\vskip1mm
\noindent Now, let us discuss  some identities that will mainly follow from Corollary \ref{cor:asym}.
The explicit computation of the spectrum, which will be detailed below, was conducted in \cite{HH15}.
For the sake of completeness, we shall  outline the main steps.
By using \eqref{lambda_n} and the following identity, see for example \cite[page 4]{MOS66},
\begin{align}\label{id:ZH}
  \forall \beta>-1,\;\forall \gamma\in \R,\quad \int_0^\pi(\sin \eta)^\beta  e^{i\gamma\eta}\dd\eta
  =\frac{\pi e^{i\frac{\gamma\pi}{2}}\Gamma(\beta+1)}{2^\beta\Gamma(1+\frac{\beta+\gamma}{2})\Gamma(1+\frac{\beta-\gamma}{2})},
\end{align}
we deduce that
\begin{align*}
  \lambda_{n,1} = \frac{c_\beta}{2\pi}\int_0^{2\pi}\frac{1}{|2\sin \frac{\eta}{2}|^\beta}\cos (n\eta)\dd \eta
  = & \frac{c_\beta}{\pi}\int_0^{\pi}\frac{1}{|2\sin \eta|^\beta}\cos (2n\eta)\dd \eta \\
  =&\frac{\Gamma(\frac{\beta}{2})}{2^{1-\beta}\Gamma(1-\frac{\beta}{2})}
  \frac{(-1)^{n}\Gamma(1-\beta)}{\Gamma(n+1-\frac{\beta}{2})\Gamma(1-n-\frac{\beta}{2})} \\
  = & \frac{\Gamma(1-\beta)}{2^{1-\beta}\Gamma^2(1-\frac{\beta}{2})}
  \frac{\Gamma(n+\frac{\beta}{2})}{\Gamma(n+1-\frac{\beta}{2})},
\end{align*}
where in the last line we have used the identity that (using the relation $\Gamma(1+\mathbf{z})=\mathbf{z}\Gamma(\mathbf{z})$)
\begin{align*}
  \frac{(-1)^{n}}{\Gamma(1-n-\frac{\beta}{2})}
  =\frac{\Gamma(\frac{\beta}{2}+n)}{\Gamma(1-\frac{\beta}{2})\Gamma(\frac{\beta}{2})}\cdot
\end{align*}
Thus from \eqref{spectral:Omega-whole} we have
\begin{align}\label{spectrum:gSQG}
  \Omega_{n,1}^0 = \lambda_{1,1} - \lambda_{n,1} = \frac{\Gamma(1-\beta)}{2^{1-\beta}\Gamma^2(1-\frac{\beta}{2})}
  \bigg(\frac{\Gamma(1+\frac{\beta}{2})}{\Gamma(2-\frac{\beta}{2})}
  -\frac{\Gamma(n+\frac{\beta}{2})}{\Gamma(n+1-\frac{\beta}{2})}\bigg),
\end{align}
which recovers the rotating angular velocity of gSQG equation proposed in \cite{CCG16b,HH15}.
\\[0.5mm]
Thanks to \eqref{eq:lamb-n}, Lemma \ref{lem:bound-varphi} and the fact that (e.g. see the identity \cite[3.241.2]{GR15})
\begin{align*}
  \int_0^{\infty}\frac{s^\beta}{1+s^2}\dd s=\frac{\pi}{2}\frac{1}{\sin \big(\frac{1+\beta}{2}\pi\big)}
  = \frac{\pi}{2} \frac{1}{\cos (\frac{\beta \pi}{2})},
\end{align*}
we can deduce that
\begin{align}\label{eq:lambda-gSQG}
  \frac{ 4n^2}{4n^2 +1}\frac{A_{\beta,0}}{n^{1-\beta}}\leqslant \lambda_{n,1}
  \leqslant \frac{4n^2}{4n^2 -1}\frac{A_{\beta,0}}{n^{1-\beta}},
\end{align}
with
\begin{align}\label{eq:c0-gSQG}
  A_{\beta,0} = \frac{2c_\beta}{ \Gamma(\beta)}\int_0^\infty \frac{s^\beta }{1 + s^2 } \dd s
  = \frac{\Gamma(\frac{\beta}{2})}{2^{2-\beta} \Gamma(\beta) \Gamma(1-\frac{\beta}{2})}
  \frac{1}{\cos (\frac{\beta \pi}{2})}
  = \frac{\Gamma(1-\beta)}{2^{1-\beta}\Gamma^2(1-\frac{\beta}{2})},
\end{align}
where in the last inequality we used the refection formula of Gamma function
\begin{align*}
  \Gamma(x) \Gamma (1-x) = \frac{\pi}{\sin (\pi x)}, \quad x\notin\Z.
\end{align*}
\vskip0.5mm
\noindent Using Proposition \ref{prop:asypt} and Corollary \ref{cor:asym},
we can easily deduce the  formula of $\lambda_{n,1}$:  
\begin{align*}
  \lambda_{n,1} = \frac{2c_\beta}{\Gamma(\beta)}
  \sum_{k=0}^N\frac{1}{n^{2k+1}}\int_0^\infty \Psi_k(\tfrac{ x}{n}) x^{\beta-1} \dd x + \varepsilon_{n,N}
  = \sum_{k=0}^{N}\frac{A_{\beta,k}}{n^{2k+1-\beta}} + \varepsilon_{n,N},
\end{align*}
where $\Psi_k$ is given by \eqref{eq:psi-rela1}-\eqref{eq:psi-rela2} and
\begin{align*}
  A_{\beta,k}  = \frac{2c_\beta}{ \Gamma(\beta)}\int_0^\infty \Psi_k(x) x^{\beta-1}\dd x
\end{align*}
where
\begin{align*}
  |\varepsilon_{n,N}|\leqslant \frac{c_\beta C_N}{ \Gamma(\beta)} \frac{1}{n^{2N+\frac{5}{3}}}
  \int_0^\infty \frac{x^{\beta-1}}{1+\frac{x}{n}}
  \dd x \leqslant \frac{C_{N,\beta}}{n^{2N+\frac{5}{3}-\beta }}\cdot
\end{align*}
Note that for $k=0$, $A_{\beta,0}$ has the explicit formula shown by \eqref{eq:c0-gSQG}.
Therefore, we infer from  the formula \eqref{spectrum:gSQG}, the following  asymptotic expansion of the Wallis quotient
\begin{align}\label{eq:wallis1}
  \frac{\Gamma(n+\frac{\beta}{2})}{\Gamma(n + 1 - \frac{\beta}{2})} = \frac{1}{n^{1-\beta}}
  + \sum_{k=1}^N \frac{A_{\beta,k}}{A_{\beta,0}} \frac{1}{n^{2k+1-\beta}} + O\Big(\frac{1}{n^{2N+\frac{5}{3}-\beta}} \Big).
\end{align}
On the other hand, recall  that the Wallis quotient  has the following expansion formula, see for instance \cite[p. 34]{Luke69} or
 \cite[Eq. (6.4)]{BE11},
\begin{align*}
  \frac{\Gamma(\mathbf{z}+a)}{\Gamma(\mathbf{z}+1-a)}
  = \mathbf{z}^{2a-1} \sum_{k=0}^N \frac{B^{(2a)}_{2k}(a) (1-2a)_{2k}}{(2k)!} \mathbf{z}^{-2k}
  +O(\mathbf{z}^{-2(N+1)+ 2a -1}),\quad |\arg \mathbf{z}|<\pi,
\end{align*}
where $B^{(s)}_n(t)$ stands for the generalized Bernoulli polynomials given by the following generating function
\begin{align*}
  \frac{x^s e^{tx}}{(e^x -1)^s} = \sum_{n=0}^\infty B^{(s)}_n(t) \frac{x^n}{n!},
\end{align*}
and $(t)_n$ is the Pochhammer symbol defined as
\begin{equation*}
  (t)_n \triangleq
\begin{cases}
  t(t+1)\cdots (t+n-1),\quad &\textrm{if}\;\;n\in\mathbb{N}^\star,\\
  1,\quad & \textrm{if}\;\; n=0.
\end{cases}
\end{equation*}
Thus we also have
\begin{align}\label{eq:wallis2}
  \frac{\Gamma(n+ \frac{\beta}{2})}{\Gamma(n+1 -\frac{\beta}{2})}
  = \sum_{k=0}^N \frac{B^{(\beta)}_{2k}(\frac{\beta}{2}) (1-\beta)_{2k}}{(2k)!} \frac{1}{n^{2k+1-\beta}}
  +O(n^{-2(N+1)+\beta -1}).
\end{align}
By comparing \eqref{eq:wallis1} and \eqref{eq:wallis2}, we deduce the following interesting identity:
\begin{align*}
  \frac{B^{(\beta)}_{2k}(\frac{\beta}{2}) (1-\beta)_{2k}}{(2k)!} A_{\beta,0} = A_{\beta,k},\quad \forall k\in\mathbb{N}.
\end{align*}
\vskip0.5mm
\noindent In addition, owing to \eqref{eq:bound-diff-phi-n} and \eqref{eq:lamb-n}, we find
\begin{align}\label{eq:lambda-gSQG-diff}
  \lambda_{n,1}-\lambda_{n+1,1}\approx \frac{2c_\beta}{ \Gamma(\beta)}
  \int_0^{\infty}\frac{(2n+1)x^\beta}{(n^2+x^2) \big((n+1)^2+x^2\big)} \dd x  \approx_\beta \frac{1}{n^{2-\beta}}.
\end{align}
Finally, applying Lemma \ref{lem:convex-lem} with $f(x) = \frac{c_\beta}{ \Gamma(\beta)} x^{\beta-1}$
gives  the convexity of $(\lambda_{n,1})_{n\geqslant 2}$, that is,
\begin{align*}
  \lambda_{n+1,1}+\lambda_{n-1,1}-2\lambda_{n,1}\geqslant 0,\quad \forall n\geqslant 2.
\end{align*}

\subsection{QGSW equation in the whole space}
Consider the QGSW equation in the whole plane, then it reduces to the equation \eqref{eq:ASE}
with $\mathbf{D}=\mathbb{R}^2$ and the stream function $\psi = ( -\Delta + \varepsilon^2)^{-1}\omega$, with $\varepsilon >0$ the deformation radius.
According to \cite{DHR19}, the kernel involved  \eqref{eq:psi} takes the form
\begin{align*}
  K(\mathbf{x},\mathbf{y}) = K_0(|\mathbf{x} -\mathbf{y}|) 
  = \frac{1}{2\pi} \mathbf{K}_0(\varepsilon |\mathbf{x} -\mathbf{y}|),
\end{align*}
where $\mathbf{K}_0$ is the modified Bessel function defined in Subsection \ref{subsec:Bessel}.
In view of \eqref{eq:Knu-Lap},
\begin{align*}
  K_0(t)=\frac{1}{2\pi}\mathbf{K}_0(\varepsilon t) = \frac{1}{2\pi} \int_1^\infty \frac{e^{- \varepsilon x t}}{\sqrt{x^2 -1}} \dd x,
\end{align*}
we obviously note that $-K_0'$ is completely monotone and by change of variables
\begin{align*}
  -K_0'(t) = \frac{1}{2\pi} \int_1^\infty \frac{\varepsilon x e^{-\varepsilon x t}}{\sqrt{x^2 -1}} \dd x
  = \frac{1}{2\pi} \int_\varepsilon^\infty e^{- tx} \frac{x }{\sqrt{x^2 -\varepsilon^2}}\dd x = \int_0^\infty e^{-tx} \dd \mu(x),
\end{align*}
with the nonnegative measure $\mu$ given by
\begin{align*}
  \dd \mu(x)=\frac{1}{2\pi}\frac{x}{\sqrt{x^2-\varepsilon^2}}\mathbf{1}_{\{x>\varepsilon\}} \dd x.
\end{align*}
Besides,  for $0\leqslant \alpha<1$,
\begin{align*}
  \int_0^{a_0} | K_0(t)| t^{-\alpha + \alpha^2} \dd t
  & = \frac{1}{2\pi} \int_1^\infty \frac{1}{\sqrt{x^2 -1}} \int_0^{a_0} e^{-\varepsilon x t} t^{-\alpha +\alpha^2} \dd t \dd x \\
  & \leqslant C_\alpha \int_1^\infty \frac{1}{\sqrt{x^2-1}} (\varepsilon x)^{-(1-\alpha +\alpha^2)} \dd x <\infty,
\end{align*}
which ensures that the condition \eqref{cond:K0} is verified. Hence, Theorem \ref{thm:main} can be applied in this case with any $\alpha\in (0,1)$ yielding to the result of \cite{DHR19}. Now, let us explore some other consequences.
\\[0.5mm]
By using \eqref{spectral:Omega-whole} and the identity \eqref{eq:Nicholson},
we can easily recover the result in \cite{DHR19}, namely,
\begin{align}\label{eq:Omeg-QGSW}
  \Omega_{n,1}^0 = \lambda_{1,1} - \lambda_{n,1} & = \frac{1}{2\pi} \int_{-\pi}^\pi \mathbf{K}_0\big(|2\varepsilon \sin \tfrac{\eta}{2}|\big)\cos \eta\dd \eta
  - \frac{1}{2\pi} \int_{-\pi}^\pi \mathbf{K}_0\big(|2 \varepsilon \sin \tfrac{\eta}{2}|\big)\cos(n\eta) \dd \eta \nonumber  \\
  & = \frac{2}{\pi} \int_0^{\frac{\pi}{2}} \mathbf{K}_0(2\varepsilon \sin \eta) \cos (2\eta) \dd \eta
  - \frac{2}{\pi} \int_0^{\frac{\pi}{2}} \mathbf{K}_0(2\varepsilon \sin \eta) \cos(2 n\eta) \dd \eta \nonumber \\
  & = - \frac{2}{\pi} \int_0^{\frac{\pi}{2}} \mathbf{K}_0(2\varepsilon \cos \eta) \cos (2\eta) \dd \eta
  - \frac{2(-1)^n}{\pi} \int_0^{\frac{\pi}{2}} \mathbf{K}_0(2\varepsilon \cos \eta) \cos(2 n\eta) \dd \eta \nonumber \\
  & = \mathbf{I}_1(\varepsilon)\mathbf{K}_1(\varepsilon)-\mathbf{I}_{n}(\varepsilon)\mathbf{K}_{n}(\varepsilon).
\end{align}
\vskip0.5mm
\noindent Lemma \ref{lem:bound-varphi} and \eqref{eq:lamb-n} yield
\begin{align*}
  \frac{4n^2}{4n^2 + 1} \frac{1}{\pi}
  \int_\varepsilon^{\infty}\frac{1}{\sqrt{x^2-\varepsilon^2}}
  \frac{x}{n^2+x^2}\dd x
  \leqslant \lambda_{n,1} \leqslant \frac{4n^2}{4n^2 -1} \frac{1}{\pi}
  \int_\varepsilon^{\infty}\frac{1}{\sqrt{x^2-\varepsilon^2}}
  \frac{x}{n^2+x^2}\dd x .
\end{align*}
From the explicit value
\begin{align*}
  \int_{\varepsilon}^{\infty}\frac{1}{\sqrt{x^2-\varepsilon^2}} \frac{x}{n^2+x^2}\dd x
  = \frac{\pi}{2} \frac{1}{\sqrt{n^2 + \varepsilon^2}},
\end{align*}
we find that for $n\in\mathbb{N}^\star$,
\begin{align}\label{eq:lamb-QGSW-bdd}
  \frac{2n^2}{4n^2+1} \frac{1}{\sqrt{n^2 +\varepsilon^2}} \leqslant
  \lambda_{n,1} \leqslant \frac{2n^2}{4n^2-1} \frac{1}{\sqrt{n^2 +\varepsilon^2}}.
\end{align}
The inequality \eqref{eq:bound-diff-phi-n} implies that for $n\in\mathbb{N}^\star$,
\begin{align}\label{eq:lamb-QGSW-diff}
  \lambda_{n,1}-\lambda_{n+1,1} & \approx \int_\varepsilon^\infty \frac{x}{\sqrt{x^2 -\varepsilon^2}}
  \frac{2n+1}{(n^2+x^2) \big((n+1)^2+x^2\big)} \dd x \nonumber \\
  & \approx \int_\varepsilon^\infty \frac{x}{\sqrt{x^2 -\varepsilon^2}} \frac{1}{n^2 + x^2} \dd x
  - \int_\varepsilon^\infty \frac{x}{\sqrt{x^2 -\varepsilon^2}} \frac{1}{(n+1)^2 + x^2} \dd x \nonumber \\
  & \approx \frac{1}{\sqrt{n^2+\varepsilon^2}} - \frac{1}{\sqrt{(n+1)^2 + \varepsilon^2}}\cdot
\end{align}
According to Corollary \ref{cor:asym}, we infer that
\begin{equation}\label{eq:lamb-QGSW}
\begin{split}
  \lambda_{n,1} = \mathbf{I}_n(\varepsilon) \mathbf{K}_n(\varepsilon)
  & = \frac{1}{\pi} \sum_{k=0}^{N}\frac{1}{n^{2k+1}} \int_\varepsilon^\infty \Psi_k(\tfrac{ x}{n})
  \frac{1}{\sqrt{x^2 -\varepsilon^2}}\dd x
  +\varepsilon_{n,N} \\
  & = \frac{1}{\pi} \sum_{k=0}^{N}\frac{1}{n^{2k+1}} \int_{\frac{\varepsilon}{n}}^\infty \Psi_k(x)
  \frac{1}{\sqrt{x^2 -\varepsilon^2/n^2}}\dd x
  + \varepsilon_{n,N},
\end{split}
\end{equation}
where $\Psi_k$ is given by \eqref{eq:psi-rela1}-\eqref{eq:psi-rela2} and
\begin{align*}
  |\varepsilon_{n,N}| \leqslant
  \frac{C_N}{n^{2N+\frac{5}{3}}}\int_\varepsilon^\infty \frac{1}{1+\frac{x}{n}}
  \frac{1}{\sqrt{x^2 -\varepsilon^2}}   \dd x
  \leqslant \frac{C_{N,\varepsilon} (\log n+1)}{n^{2N+\frac{5}{3}}},
\end{align*}
with some constant $C_{N,\varepsilon}>0$ independent of $n$. Note that the first term on the right-hand side of \eqref{eq:lamb-QGSW}
is $\frac{1}{2 \sqrt{n^2 + \varepsilon^2}}$, and direct calculations give $\Psi_1(x) = \frac{x(x^4 -6x^2 +1)}{4(1+x^2)^4}$,
\begin{align*}
  \int_{\frac{\varepsilon}{n}}^\infty \Psi_1(x) \frac{1}{\sqrt{x^2 - \varepsilon^2/n^2}} \dd x
  & = \frac{n^3}{(\sqrt{n^2+\varepsilon^2})^3} \Big(\frac{\pi}{16} - \frac{3\pi}{8} \frac{n^2}{n^2+\varepsilon^2}
  + \frac{5 \pi}{16} \frac{n^4}{(n^2 + \varepsilon^2)^2}\Big) \\
  & = \frac{n^3}{(\sqrt{n^2+\varepsilon^2})^3} \Big( -\frac{\pi}{8} \frac{\varepsilon^2}{n^2 + \varepsilon^2}
  + \frac{5\pi}{16} \frac{\varepsilon^4}{(n^2 +\varepsilon^2)^2}\Big),
\end{align*}
and
\begin{align*}
  \int_{\frac{\varepsilon}{n}}^\infty \Psi_2(x) \frac{1}{\sqrt{x^2 -\varepsilon^2/n^2}} \dd x
  = O\Big(\frac{\varepsilon^2}{n^2 +\varepsilon^2} \Big).
\end{align*}
Thus we have
\begin{align}\label{eq:lamb-QGSW2}
  \lambda_{n,1} = \mathbf{I}_n(\varepsilon) \mathbf{K}_n(\varepsilon)
  = \frac{1}{2\sqrt{n^2+\varepsilon^2}}
  + O\Big(\frac{1}{n^5} \Big) .
\end{align}
\vskip0.5mm
\noindent Finally, let us make a  remark on the convexity of the spectrum $(\lambda_{n,1})_{n\geqslant 2}$.
First,  Lemma \ref{lem:convex-lem} does not apply to this case due to jump of the measure density at $\varepsilon$,
but through some numerical experiments one can conjecture that for $n\geqslant 2$,
\begin{align*}
  \lambda_{n+1,1}+\lambda_{n-1,1}-2\lambda_{n,1}\geqslant 0.
\end{align*}
So far we do not know how to rigorously prove this result, but as a simple application of \eqref{eq:lamb-QGSW2},
we can show the convexity result  for every $n\geqslant n_\varepsilon$ with some $n_\varepsilon\in\mathbb{N}$ sufficiently large.

\subsection{Euler-$\alpha$ equation in the whole space}\label{subsec:Euler-alph}
The Euler-$\alpha$ equation is a regularization of 2D Euler equation
and it has been introduced in the context of averaged fluid models, see \cite{HMR98a,HMR98b}.
By considering its vorticity form in the whole plane, it corresponds to the equation \eqref{eq:ASE}
with $\mathbf{D} =\mathbb{R}^2$ and the stream function $\psi = (-\Delta)^{-1} \omega - ( -\Delta + \frac{1}{\alpha^2})^{-1}\omega$, for $\alpha>0$.\\
The kernel involved in \eqref{eq:psi} takes the form
\begin{align*}
  K(\mathbf{x},\mathbf{y}) = K_0 (|\mathbf{x}-\mathbf{y}|) =
  -\tfrac{1}{2\pi} \log |\mathbf{x}-\mathbf{y}| - \tfrac{1}{2\pi} \mathbf{K}_0(\tfrac{1}{\alpha}|\mathbf{x}-\mathbf{y}|).
\end{align*}
Thus,  $K_0(t) = -\frac{1}{2\pi}\log|t|-\frac{1}{2\pi }\mathbf{K}_0(\frac{1}{\alpha}|t|)$ satisfies that
\begin{align*}
  -K'_0(t)=\frac{1}{2\pi}\int_0^{\infty}e^{-tx}
  \Big(1-\frac{ x \alpha\mathbf{1}_{\{x> \frac{1}{\alpha}\}}}{\sqrt{x^2\alpha^2-1}}\Big)\dd x,
\end{align*}
which implies that  $-K'_0$ is not completely monotone, and  Theorem \ref{thm:main} cannot be applied for any symmetry $m\in\N^\star$. However, this theorem occurs for higher symmetry $m$.
Indeed, using \eqref{spectral:Omega-whole}, \eqref{spectrum:Euler}, \eqref{eq:Omeg-QGSW} and
\eqref{eq:lamb-QGSW2}, we can deduce that $(\Omega_{n,1})$ is strictly increasing for every $n\geqslant n_\alpha$ with some
$n_\alpha\in\N$ large enough. Hence, we may check that all the assumptions of Crandall-Rabinowitz's theorem work well.
Note that, in a recent work \cite{Rou23b} the strict monotonicity of $(\Omega_{n,1})$ is satisfied  for all the range $n\in \mathbb{N}^\star$,
and the author obtained the existence of $m$-fold symmetric V-states for the Euler-$\alpha$ equation.
Actually, the monotonicity follows directly from  the explicit formula of the spectrum which  takes the following form
\begin{align*}
  \Omega_{n,1}=\frac{2n-1}{2n}-\Big(\mathbf{I}_1\big(\tfrac{1}{\alpha}\big)
  \mathbf{K}_1\big(\tfrac{1}{\alpha}\big)-\mathbf{I}_n
  \big(\tfrac{1}{\alpha}\big) \mathbf{K}_{n}\big(\tfrac{1}{\alpha}\big)\Big).
\end{align*}

\subsection{2D Euler equation in the unit disc}

Consider the 2D incompressible Euler equation in the vorticity form in the unit disc $\mathbb{D}$ with rigid boundary condition
(the non-penetration boundary condition),
that is, the equation \eqref{eq:ASE} in $\mathbf{D} = \mathbb{D}$
with the stream function $\psi$ solving the Dirichlet problem in the unit disc
\begin{align}\label{eq:Euler-bdr-cd}
  - \Delta \psi = \omega,\quad \textrm{in}\; \mathbb{D},\qquad \psi |_{\partial \mathbb{D}} =0.
\end{align}
It is classical that the expression formula \eqref{eq:psi} holds with the Green function $K$ given by
\begin{align*}
  K(\mathbf{x},\mathbf{y})= - \tfrac{1}{2\pi} \log \tfrac{|\mathbf{x} - \mathbf{y}|}{ |1-\mathbf{x} \overline{\mathbf{y}}
  |} = - \tfrac{1}{2\pi} \log |\mathbf{x} -\mathbf{y}|
  + \tfrac{1}{2\pi} \log|1-\mathbf{x} \overline{\mathbf{y}}
  |,\quad \mathbf{x},\mathbf{y}\in \mathbb{D} .
\end{align*}
Clearly, $t\in(0,\infty)\mapsto K_0(t) = - \frac{1}{2\pi} \log t$ satisfies the assumptions $(\mathbf{A}1)$-$(\mathbf{A}2)$ with $\alpha\in (0,1)$,
and the perturbative kernel $(\mathbf{x},\mathbf{y})\in\D^2\mapsto K_1(\mathbf{x},\mathbf{y}) =
\frac{1}{2\pi} \log|1-\mathbf{x} \overline{\mathbf{y}}
 |$
is smooth and satisfies the assumptions $(\mathbf{A}3)$-$(\mathbf{A}4)$.
Hence, Theorem \ref{thm:perturbative} can be applied to the study of V-states
around the Rankine vortices $\mathbf{1}_{b\mathbb{D}}$ ($0<b<1$) leading to  the bifurcation  for large symmetry $m$.
Actually, as we shall see below, we are able to retrieve all the symmetry $m\geqslant 1$.
This allows to replicate  the full result obtained in \cite{DHHM}.
To start, we note that  the quantity
\begin{align*}
  G_1(\rho_1,\theta,\rho_2,\eta)
  = K_1(\rho_1 e^{i\theta},\rho_2 e^{i\eta}) =
  \tfrac{1}{2\pi} \log \big( |\rho_1\rho_2 e^{i\theta} - e^{i\eta} |\big),
\end{align*}
satisfies the property that $\partial_{\rho_1} G_1(\rho_1,\theta,\rho_2,\eta)
= \frac{\rho_2}{\rho_1} \partial_{\rho_2} G_1(\rho_1,\theta,\rho_2,\eta)$,
and owing to \eqref{spectral:Omega-general}, \eqref{eq:lambd-n-euler} and the following fact (see e.g. 4.397 of \cite{GR15})
\begin{align*}
  \int_0^{2\pi} \log(1- 2 a \cos \eta + a^2) \cos (n\eta) \dd \eta = - \frac{2\pi}{n} a^n,\quad |a|<1,
\end{align*}
\begin{align*}
  \int_0^{2\pi} \log(1- 2 a \cos \eta + a^2) \dd \eta =0,\quad |a|\leqslant 1,
\end{align*}
we obtain through integration by parts
\begin{align*}
  \Omega_{n,b} & = \frac{n-1}{2n} -b^{-1} \int_0^{2\pi} \int_0^b \partial_{\rho_1}G_1(b,0,\rho,\eta) \rho \dd \rho \dd \eta
  - \int_0^{2\pi} G_1(b,0,b,\eta)\cos (n\eta)\dd \eta \\
  & = \frac{n-1}{2n} -  b^{-2} \int_0^{2\pi}\int_0^b
  \partial_\rho G_1(b,0,\rho,\eta) \rho^2 \dd \rho \dd \eta
  - \frac{1}{4\pi} \int_0^{2\pi} \log \big(b^4 +1 - 2 b^2\cos\eta\big)\cos (n\eta)\dd \eta \\
  & = \frac{n-1 + b^{2n}}{2n} -  \int_0^{2\pi} G_1(b,0,b,\eta) \dd \eta
  + 2 b^{-2} \int_0^{2\pi} \int_0^b G_1(b,0,\rho,\eta) \rho \dd \rho \dd \eta,
\end{align*}
implying that
\begin{align*}
  \Omega_{n,b}
  & = \frac{n-1 + b^{2n}}{2n} - \frac{1}{4\pi} \int_0^{2\pi} \log(b^4 + 1 - 2b^2 \cos \eta) \dd \eta \\
  & \quad +  \frac{b^{-2}}{2\pi} \int_0^b \int_0^{2\pi} \log(b^2\rho^2 +1 -2 b\rho \cos \eta) \dd \eta \,\rho \dd \rho \\
  & = \frac{n-1 + b^{2n}}{2n}\cdot
\end{align*}
This formula coincides  with the rotating angular velocity established in \cite{DHHM}. Direct calculation shows that
$(\Omega_{n,1})_{n\in \mathbb{N}^\star}$
is strictly increasing, thus we can remove the restriction on $m$
in \mbox{Theorem \ref{thm:perturbative}} and recover the existence of $m$-fold symmetric V-states with $m\in\mathbb{N}^\star$
for the 2D Euler equation in the unit disc as in \cite[Theorem 1]{DHHM}.

\subsection{gSQG equation in the unit disc}

If we consider the gSQG equation in the unit disc $\mathbb{D}$ with rigid boundary,
it corresponds to the equation \eqref{eq:ASE} with $\mathbf{D} = \mathbb{D}$ and the stream function $\psi$ solving
\begin{align*}
  \psi = (- \Delta)^{-1 + \frac{\beta}{2}} \omega,\quad \textrm{in}\; \mathbb{D},\qquad \psi |_{\partial \mathbb{D}} = 0,
  \quad \beta\in (0,1).
\end{align*}
Equivalently, $\psi$ satisfies
\begin{equation}\label{eq:psi-exp-qg}
	\begin{split}
		\psi(\mathbf{x})=\frac{1}{\Gamma(1-\frac{\beta}{2})}
		\int_0^{\infty} t^{-\frac{\beta}{2}} e^{t\Delta} \omega(\mathbf{x}) \dd t
		=\int_{\mathbf{D}}K(\mathbf{x},\mathbf{y}) \omega(\mathbf{y}) \dd \mathbf{y},
	\end{split}
\end{equation}
where the Laplacian $\Delta$ is defined on $\mathbb{D}$
with Dirichlet boundary condition.
According to \cite[Lemma 2.3]{HXX23}, the spectral Green function $K$ satisfies
\begin{align*}
  K(\mathbf{x},\mathbf{y}) = K_0(|\mathbf{x} - \mathbf{y}|) + K_1(\mathbf{x},\mathbf{y})
  =  \tfrac{\Gamma(\frac{\beta}{2})}{2^{2-\beta} \pi\Gamma(1-\frac{\beta}{2})} |\mathbf{x} - \mathbf{y}|^{-\beta}
  + K_1(\mathbf{x},\mathbf{y}),
\end{align*}
and $K_1\in C^\infty(\mathbb{D}\times \mathbb{D})$. In view of Lemma 2.4 of \cite{HXX23},
$K_1(\mathbf{x},\mathbf{y})$ satisfies the assumptions $(\mathbf{A}3)$-$(\mathbf{A}4)$.
Hence, Theorem \ref{thm:perturbative} can be applied in this case to show the existence of $m$-fold
symmetric rotating patch solutions around trivial solution $\mathbf{1}_{b\mathbb{D}}$ ($0<b<1$)
with sufficiently large $m$, which is one of the main result in \cite{HXX23}.
On the other hand, by virtue of Lemma \ref{lem:disc-exp} and \eqref{eq:psi-exp-qg}, we have
\begin{align*}
  K(\mathbf{x},\mathbf{y})=\sum_{n\in\NN, k\in\mathbb{N}^\star} x_{n,k}^{\alpha-2}
  \Big( \phi_{n,k}^{(1)}(\mathbf{x})\phi_{n,k}^{(1)}(\mathbf{y}) + \phi_{n,k}^{(2)}(\mathbf{x})\phi_{n,k}^{(2)}(\mathbf{y}) \Big).
\end{align*}
For $\mathbf{x}= \rho_1 e^{i\theta}\in \mathbb{D}$, $\mathbf{y}=\rho_2 e^{i\eta}\in\mathbb{D}$, and using the notation \eqref{def:G},
we also have
\begin{align}\label{eq:K-ss2}
  K(\mathbf{x},\mathbf{y}) = G(\rho_1,\theta,\rho_2,\eta)
  = \sum_{n\in\mathbb{N}, k\in\mathbb{N}^\star} x_{n,k}^{\alpha-2} A_{n,k}^2 J_n(x_{n,k}\rho_1) J_n(x_{n,k}\rho_2)
  \cos \big(n(\theta-\eta)\big).
\end{align}
Recall that in Subsection \ref{subsec:lin} the spectrum $\Omega_{n,b} = - V[0] - \Lambda_{n,b}$ with
\begin{align}\label{eq:V0-Lamb}
  V[0] = b^{-1}\int_{0}^{2\pi}\int_0^b\Big(\nabla_\mathbf{x}K(be^{i\theta},\rho e^{i\eta})
  \cdot e^{i\theta}\Big)\rho \dd \rho \dd \eta,\quad \Lambda_{n,b}=\int_0^{2\pi} K(b,be^{i\eta})e^{in\eta}\dd \eta,
\end{align}
we can argue as \cite{HXX23} to show that
\begin{align*}
  V[0] = - 2\sum_{ k\geqslant 1}x_{0,k}^{\alpha-2}\frac{J_1^2\big(x_{0,k} b \big)}{J_1^2(x_{0,k} )},
  \quad
  \Lambda_{n,b} = 2\sum_{k\geqslant 1}x_{m,k}^{\alpha-2}\frac{J_m^2(x_{m,k}b)}{J_{m+1}^2(x_{m,k})}\cdot
\end{align*}
By using Sneddon's formula, Lemma 5.1 of \cite{HXX23} proves the strict monotonicity of $n\mapsto \Omega_{n,b}$ in either small $b$ case
or small $\alpha$ case, and it further implies the existence of $m$-fold symmetric V-states around $\mathbf{1}_{b\mathbb{D}}$ ($0<b<1$)
in both cases.

\subsection{QGSW equation in the unit disc}
Consider the QGSW model in the unit disc $\mathbb{D}$ with rigid boundary, and it corresponds to the equation \eqref{eq:ASE}
with $\mathbf{D} = \mathbb{D}$ and the relationship between $\psi$ and $\omega$ can be expressed by
\begin{align*}
  \psi=(-\Delta+\varepsilon^2)^{-1}\omega,
\end{align*}
which denotes the unique solution to the following Dirichlet problem,
\begin{align*}
  (-\Delta+\varepsilon^2)\psi=\omega\; \text{ in }\mathbb{D},\quad
  \psi|_{\partial\mathbb{D}}=0.
\end{align*}
In order to describe the associated  Green function,  we need to solve the equation for every
$\mathbf{x}\in \mathbb{D}$,
\begin{align*}
  -\Delta_\mathbf{y}K(\mathbf{x},\mathbf{y})+\varepsilon^2K(\mathbf{x},\mathbf{y})=\delta_\mathbf{x}(\mathbf{y})\;
  \text{ in }\mathbb{D},\quad
  {K(\mathbf{x},\cdot)|_{\partial \mathbb{D}}=0,}
\end{align*}
where $\delta_{\mathbf{x}}(\mathbf{y})$ is the Dirac measure centered at the point $\mathbf{x}$.
According to the spectral theory of elliptic problems, for example \cite[Sec. 6.2 and Sec. 6.3]{Evans10},
we infer that $(-\Delta+\varepsilon^2)^{-1}$ is well-defined and bounded from $L^2(\mathbb{D})$ to $H^2(\mathbb{D})$.
In addition, we can split the kernel as follows
$$K(\mathbf{x},\mathbf{y})=K_0(|\mathbf{x}-\mathbf{y}|)+K_1(\mathbf{x},\mathbf{y})\quad\hbox{with}\quad
K_0(|\mathbf{x} - \mathbf{y}|) = \tfrac{1}{2\pi}\mathbf{K}_0(\varepsilon|\mathbf{x} -\mathbf{y}|)
$$
and $K_1$ solves the elliptic problem
\begin{align*}
  -\Delta_\mathbf{y}K_1(\mathbf{x},\mathbf{y}) + \varepsilon^2 K_1(\mathbf{x},\mathbf{y})=0, \text{ in }\mathbb{D},\quad
  K_1(\mathbf{x},\mathbf{y})|_{\mathbf{y}\in \partial \mathbb{D}} = -\tfrac{1}{2\pi} \mathbf{K}_0(\varepsilon|\mathbf{x}-\mathbf{y}|).
\end{align*}
Since $\mathbf{K}_0$ is smooth except at $\mathbf{x}=0$,
by the classical regularity theory of elliptic PDE,
we have that $\mathbf{y}\in \D \mapsto K_1(\mathbf{x},\mathbf{y})$ is smooth for any $\mathbf{x}\in \mathbb{D}$.
Following exactly the same argument in \cite[p. 39]{Evans10}, we find $K(\mathbf{x},\mathbf{y}) = K(\mathbf{y},\mathbf{x})$.
Thus $K_1$ belongs to $C^\infty(\mathbb{D}\times \mathbb{D})$ and
moreover the geometric properties \eqref{Sym01} and \eqref{Sym1} in $\mathbb{D}$
can be easily checked by arguing as \cite[Lemma 2.4]{HXX23}.
Hence, for every $\varepsilon>0$ and $b\in (0,1)$,
we can apply Theorem \ref{thm:perturbative} to show the existence of $m$-fold symmetric V-states
for the QGSW equation in the unit disc $\mathbb{D}$ with $m$ large enough.
\\[0.5mm]
On the other hand, according to the  work developed on the unit disc $\D$, we actually  obtain an explicit formula for
$K$ expressed by series in terms of the eigenvalue-eigenfunction pairs of spectral problem \eqref{def:phi-j2}.
According to the spectral theory of second order elliptic PDE, e.g. see \cite[Sec. 6.5]{Evans10},
the eigenfunctions $\big(\phi^{(1)}_{n,k},\phi^{(2)}_{n,k}\big)_{n\in \N,k\in \mathbb{N}^\star}$ in Lemma \ref{lem:disc-exp}
form an orthonormal basis in $L^2(\D)$ and belong to $H_0^1(\D)$.
Then via a simple calculation, we infer that
\begin{align*}
  \psi(\mathbf{x})=\int_{\D}\sum_{n\in \N,k\in \mathbb{N}^\star}
  \frac{1}{x^2_{n,k}+\varepsilon^2}\Big(\phi^{(1)}_{n,k}(\mathbf{x})\phi^{(1)}_{n,k}(\mathbf{y})
  +\phi^{(2)}_{n,k}(\mathbf{x})\phi^{(2)}_{n,k}(\mathbf{y})\Big) \omega(\mathbf{y})\dd \mathbf{y}.
\end{align*}
Comparing with \eqref{eq:psi}
leads to
\begin{align}
  K(\mathbf{x},\mathbf{y}) = \sum_{n\in \N,k\in \mathbb{N}^\star}
  \frac{1}{x^2_{n,k}+\varepsilon^2}\Big(\phi^{(1)}_{n,k}(\mathbf{x})\phi^{(1)}_{n,k}(\mathbf{y})
  + \phi^{(2)}_{n,k}(\mathbf{x}) \phi^{(2)}_{n,k}(\mathbf{y})\Big).
\end{align}
Similarly to \eqref{eq:K-ss2}, we find
\begin{align}\label{eq:K-exp-QGSW-disc}
  K(\mathbf{x},\mathbf{y}) = G(\rho_1,\theta,\rho_2,\eta)
  = \sum_{n\in\mathbb{N}, k\in\mathbb{N}^\star} \frac{1}{x_{n,k}^2 + \varepsilon^2} A_{n,k}^2 J_n(x_{n,k}\rho_1) J_n(x_{n,k}\rho_2)
  \cos \big(n(\theta-\eta)\big),
\end{align}
with $A_{n,k}$ given by \eqref{eq:Ank}.
Concerning  the spectrum $\Omega_{n,b} = - V[0] - \Lambda_{n,b}$, in view of \eqref{eq:V0-Lamb} and \eqref{eq:K-exp-QGSW-disc},
and arguing as \cite[Eq. (113)]{HXX23}, we can show that
\begin{align}\label{eq:V0-QGSW-disc}
  V[0]=-2\sum_{k\in \mathbb{N}^\star}\frac{1}{x_{0,k}^2+\varepsilon^2}\frac{J_1^2(bx_{0,k})}{J_1^2(x_{0,k})},
\end{align}
and
\begin{align}\label{eq:lambda-QGSW-disc}
  \Lambda_{n,b} & = \int_0^{2\pi} \bigg(\sum_{\ell\in \mathbb{N},k\in \mathbb{N}^\star}
  \frac{1}{x_{\ell,k}^2 +\varepsilon^2} A_{\ell,k}^2 J_\ell^2(b x_{\ell,k})  \cos(\ell\eta)  \bigg) e^{i n\eta}\dd \eta \nonumber\\
  & = 2\sum_{k\in \mathbb{N}^\star}\frac{1}{x_{n,k}^2+\varepsilon^2}\frac{J_n^2(bx_{n,k})}{J_{n+1}^2(x_{n,k})}\cdot
\end{align}
\vskip0.5mm
\noindent Interestingly, as the Sneddon's formula used in \cite{HXX23}, there is also a suitable  summation formula for \eqref{eq:lambda-QGSW-disc}.
Choosing $X=Y=b$, $\nu=n$ and $\mathbf{z}=\varepsilon$ in the Kneser-Sommerfeld expansion \eqref{eq:formula-sum-Bessel},
we obtain
\begin{align}\label{eq:lambda-QGSW-disc2}
  \Lambda_{n,b} 
  = \mathbf{I}_n(b\varepsilon)\mathbf{K}_n(b\varepsilon) -
  \frac{\mathbf{K}_n(\varepsilon)}{\mathbf{I}_n(\varepsilon)}\mathbf{I}^2_n(b\varepsilon).
\end{align}
\noindent As $\varepsilon \rightarrow 0$, noting that $\sum_{k=1}^\infty \frac{1}{x_{0,k}^2} \frac{J_1^2(b x_{0,k})}{J_1^2(x_{0,k})} = \frac{1}{4} $
(using Sneddon's formula,
e.g. see \cite[Eq. (29)]{HXX23}), and applying  the asymptotics of $\mathbf{I}_n(x)$
and $\mathbf{K}_n(x)$ in \eqref{eq:InKn-asymp}, we deduce that the spectrum $\Omega_{n,b}$ in QGSW equation in the unit disc
satisfies
\begin{align*}
  \lim_{\varepsilon \rightarrow 0} \Omega_{n,b} = 2 \sum_{k=1}^\infty \frac{1}{x_{0,k}^2} \frac{J_1^2(b x_{0,k})}{J_1^2(x_{0,k})}
  - \lim_{\varepsilon \rightarrow 0} \Lambda_{n,b} = \frac{1}{2} - \frac{1 - b^{2n}}{2n},
\end{align*}
which coincides with the spectrum of 2D Euler equation in the unit disc.
\\[0.5mm]
The monotonicity of $(\Lambda_{n,b})_{n\in\mathbb{N}^\star}$ given by \eqref{eq:lambda-QGSW-disc2}
for every $b\in (0,1)$ and $\varepsilon >0$ is a crucial property and seems not easy to achieve.
Below, we show that for every $\varepsilon >0$ and $b\in (0,b_*)$ with some small $b_*\in (0,\frac{1}{2})$ depending only on $\varepsilon$,
such a sequence $(\Lambda_{n,b})_{n\in\mathbb{N}^\star}$ is strictly increasing with respect to $n$.
Notice that
\begin{align*}
  \Lambda_{n,b} - \Lambda_{n+1,b} & = \Big(\mathbf{I}_n(b\varepsilon) \mathbf{K}_n(b\varepsilon)
  - \mathbf{I}_{n+1}(b\varepsilon) \mathbf{K}_{n+1}(b\varepsilon)  \Big)
  \bigg( 1 - \frac{\mathbf{I}_n(\varepsilon)\mathbf{K}_n(\varepsilon)}{\mathbf{I}_n(b\varepsilon)\mathbf{K}_n(b\varepsilon)}
  \frac{\mathbf{I}_n^2(b\varepsilon)}{\mathbf{I}_n^2(\varepsilon)}\bigg) \\
  & \quad - \mathbf{I}_{n+1}(b\varepsilon) \mathbf{K}_{n+1}(b\varepsilon)
  \bigg( \frac{\mathbf{I}_n(\varepsilon)\mathbf{K}_n(\varepsilon)}{\mathbf{I}_n(b\varepsilon)\mathbf{K}_n(b\varepsilon)}
  \frac{\mathbf{I}_n^2(b\varepsilon)}{\mathbf{I}_n^2(\varepsilon)}
  - \frac{\mathbf{I}_{n+1}(\varepsilon)\mathbf{K}_{n+1}(\varepsilon)}{\mathbf{I}_{n+1}(b\varepsilon)\mathbf{K}_{n+1}(b\varepsilon)}
  \frac{\mathbf{I}_{n+1}^2(b\varepsilon)}{\mathbf{I}_{n+1}^2(\varepsilon)} \bigg) .
\end{align*}
By using \eqref{eq:lamb-QGSW-bdd}, \eqref{eq:lamb-QGSW-diff} and the following fact
\begin{align*}
  \forall n\in \mathbb{N}^\star,\; x>0,\; b\in(0,1),\qquad \mathbf{K}_n(x)>0,\quad\textrm{and}\quad 0< \mathbf{I}_n(bx) \leqslant b^n \mathbf{I}_n(x),
\end{align*}
we deduce that
\begin{align*}
  \frac{\mathbf{I}_n(\varepsilon)\mathbf{K}_n(\varepsilon)}{\mathbf{I}_n(b\varepsilon)\mathbf{K}_n(b\varepsilon)}
  \leqslant \frac{4n^2 +1}{4n^2-1} \frac{\sqrt{n^2 + (b\varepsilon)^2}}{\sqrt{n^2 + \varepsilon^2}} \leqslant \frac{5}{3},
\end{align*}
and
\begin{align*}
  \Lambda_{n,b} - \Lambda_{n+1,b}
  & \geqslant \frac{1}{4} \bigg(\frac{1}{\sqrt{n^2 + (b\varepsilon)^2}} - \frac{1}{\sqrt{(n+1)^2+ (b\varepsilon)^2}} \bigg)
  \Big(1- \frac{5}{3} b^{2n} \Big)
  - \frac{8}{17} \frac{1}{\sqrt{(n+1)^2 + (b\varepsilon)^2}} \frac{5}{3} b^{2n} \\
  & \geqslant \frac{2n + 1}{\sqrt{n^2 + (b\varepsilon)^2} ((n+1)^2 + (b\varepsilon)^2) }
  \bigg( \frac{1}{8} - \frac{5}{6} b^{2n} - \frac{40}{51} \frac{(n+1)^2 + (b\varepsilon)^2}{2n+1} b^{2n} \bigg).
\end{align*}
Thus, by taking  $b\leqslant \frac{1}{2}$, and setting $\sup_{n\in\mathbb{N}^\star}\frac{((n+1)^2 + (\varepsilon/2)^2)}{2n+1} \frac{1}{2^n}
= C(\varepsilon)$, we deduce that
\begin{align*}
  \Lambda_{n,b} - \Lambda_{n+1,b} \geqslant  \frac{2n + 1}{\sqrt{n^2 + (b\varepsilon)^2} ((n+1)^2 + (b\varepsilon)^2) }
  \bigg( \frac{1}{8} - \frac{5}{6} b^{2n} - \frac{40}{51} C(\varepsilon) b^n \bigg) .
\end{align*}
Hence, there exists a small constant $b_*\in (0,\frac{1}{2})$ depending only on $\varepsilon$
so that for every $\varepsilon>0$ and $b\in (0,b_*)$
the sequence $(\Lambda_{n,b})_{n\in \mathbb{N}^\star}$ is strictly increasing with respect to $n$.
With this property at hand, and for every $\varepsilon>0$ and $b\in (0,b_*)$,
we can show the existence of $m$-fold ($m\in \mathbb{N}^\star$) symmetric rotating solutions around $\mathbf{1}_{b \mathbb{D}}(\mathbf{x})$
for the QGSW equation in the unit disc. This result is completely new, in contrast to the models discussed before.

\section{Tools}\label{sec:tools}
In this section we shall collect some useful results used along the paper.
\subsection{Completely monotone functions}
This subsection is devoted to outlining  some properties of completely monotone functions.
We start with the following definition.
\begin{definition}\label{def:cmf}
A function $f:(0,\infty)\to \R$ is said to be completely monotone  if it  is of class $C^{\infty}$ and it satisfies
\begin{align*}
  (-1)^nf^{(n)}(t)\geqslant 0\qquad \forall t>0, \quad \forall  n\in \N.
\end{align*}
\end{definition}
\vskip0.5mm
\noindent The typical example is $f(t)= t^{-\alpha},$ with $\alpha\geqslant 0$.
One can refer for instance to \cite{RS10} for various examples of completely monotone functions.
\\[0.5mm]
The following result is fundamental in the theory of completely monotone functions. It gives a useful characterization through Laplace transform of Borel measure. For more details,  see for example Theorem 1.4 in \cite{RS10}.
\begin{theorem}[Bernstein's theorem]\label{lem:bernstein}
Let $f:(0,\infty)\mapsto \R$ be a completely monotone function. Then it is the Laplace transform of a unique nonnegative measure
$\mu$ on $[0,\infty)$, that is,
\begin{align*}
  \forall\, t>0,\quad f(t)=\int_{0}^{\infty}e^{-tx}\dd \mu(x)\triangleq \mathcal{L}(\mu)(t).
\end{align*}
Conversely, whenever $\mathcal{L}(\mu)(t)<\infty$ for every $t>0$, the function $t\mapsto \mathcal{L}(\mu)$ is a completely monotone function.
\end{theorem}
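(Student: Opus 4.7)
The plan is to establish the two directions separately, with the converse being immediate and the forward direction requiring a discrete approximation argument. For the converse, given a nonnegative Borel measure $\mu$ on $[0,\infty)$ with $\mathcal{L}(\mu)(t)<\infty$ for every $t>0$, I would justify differentiation under the integral by dominated convergence: for $t>0$ and small $\varepsilon\in (0,t)$, the inequality $x^n e^{-tx}\leqslant \bigl(\tfrac{n}{e\varepsilon}\bigr)^n e^{-(t-\varepsilon)x}$ combined with $\mathcal{L}(\mu)(t-\varepsilon)<\infty$ supplies an integrable majorant, yielding $f^{(n)}(t)=(-1)^n\int_0^\infty x^n e^{-tx}\,\dd\mu(x)$; the sign of this integral is exactly the complete monotonicity of $f$.

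For the forward direction I would use a discrete approximation scheme exploiting the positivity $(-1)^k f^{(k)}(n)\geqslant 0$. Assuming first that $f(0^+)<\infty$ (otherwise I replace $f$ by $f_\varepsilon(t)\triangleq f(t+\varepsilon)$ and undo this regularization at the end via a weak limit), I introduce the discrete measures
\begin{equation*}
  \mu_n\triangleq \sum_{k=0}^\infty c_k^{(n)}\,\delta_{k/n},\qquad c_k^{(n)}\triangleq \frac{(-n)^k}{k!}f^{(k)}(n)\geqslant 0.
\end{equation*}
The point of this choice is that a direct Taylor-series computation yields the identity
\begin{equation*}
  \mathcal{L}(\mu_n)(t)=\sum_{k=0}^\infty c_k^{(n)}\,e^{-tk/n}=f\bigl(n(1-e^{-t/n})\bigr),
\end{equation*}
so that $\mathcal{L}(\mu_n)(t)\to f(t)$ as $n\to\infty$ by continuity of $f$, and sending $t\to 0^+$ shows $\mu_n([0,\infty))\to f(0^+)<\infty$, hence the family $\{\mu_n\}$ is tight.

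Helly's selection theorem then produces a weakly convergent subsequence $\mu_{n_k}\rightharpoonup \mu$ on $[0,\infty)$. Testing against $e^{-tx}\in C_0([0,\infty))$ together with the pointwise limit $\mathcal{L}(\mu_{n_k})(t)\to f(t)$ gives the desired representation $f(t)=\int_0^\infty e^{-tx}\,\dd\mu(x)$. Uniqueness of $\mu$ follows from the injectivity of the Laplace transform on finite Borel measures, which is itself a consequence of the density of the linear span of $\{e^{-tx}\}_{t>0}$ in $C_0([0,\infty))$ via Stone-Weierstrass.

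I expect the most delicate step to be the justification of the identity $\sum_{k\geqslant 0} c_k^{(n)} e^{-tk/n}=f\bigl(n(1-e^{-t/n})\bigr)$: this amounts to proving that the Taylor series of $f$ around $n$ converges at the point $n(1-e^{-t/n})\in (0,n)$, and it is precisely here that the alternating positivity $(-1)^k f^{(k)}\geqslant 0$ is crucial, since it makes the partial sums nondecreasing and bounded above by $f(0^+)$, so that the integral form of Taylor's remainder tends to zero and the series represents $f$ exactly. The additional tightness-transfer step used to remove the regularization $\varepsilon$ at the end is a separate technical point requiring care in controlling mass escaping to infinity as $\varepsilon\to 0$.
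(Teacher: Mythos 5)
The paper does not prove this theorem: it simply cites Theorem 1.4 of \cite{RS10}, so there is no internal argument to compare your proposal against. What you have written reconstructs Bernstein's classical discrete-approximation proof, and the overall architecture---the nonnegative masses $c_k^{(n)}=\tfrac{(-n)^k}{k!}f^{(k)}(n)$, the identity $\mathcal{L}(\mu_n)(t)=f\bigl(n(1-e^{-t/n})\bigr)$, Helly selection, and uniqueness via Stone--Weierstrass---is the right one and can be completed.

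There is, however, a genuine gap in the step you yourself flag as delicate, namely that the Taylor series of $f$ at $a=n$ converges \emph{to} $f(x)$ at $x=n(1-e^{-t/n})\in(0,n)$. You argue that nonnegativity of the Taylor terms makes the partial sums nondecreasing and bounded above, ``so that the integral form of Taylor's remainder tends to zero.'' That inference does not follow: a nondecreasing bounded sequence converges, but nothing you have written forces its limit to equal $f(x)$ rather than something strictly smaller. To close this step one must use the monotonicity of $g_N\triangleq(-1)^{N+1}f^{(N+1)}\geqslant 0$ quantitatively. In the integral remainder
\[
 R_N(x)=\frac{1}{N!}\int_x^a (s-x)^N\,g_N(s)\,\dd s ,
\]
substitute $s=a-u(a-x)$, $u\in[0,1]$, and use $g_N\bigl(a-u(a-x)\bigr)\leqslant g_N\bigl(a(1-u)\bigr)$ (valid since $g_N$ is nonincreasing and $a-u(a-x)=a(1-u)+ux\geqslant a(1-u)$) to obtain
\[
 0\leqslant R_N(x)\leqslant\Bigl(1-\tfrac{x}{a}\Bigr)^{N+1}\frac{a^{N+1}}{N!}\int_0^1(1-u)^N g_N\bigl(a(1-u)\bigr)\,\dd u
 =\Bigl(1-\tfrac{x}{a}\Bigr)^{N+1}R_N(0^+)
 \leqslant\Bigl(1-\tfrac{x}{a}\Bigr)^{N+1}f(0^+),
\]
which tends to $0$. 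This geometric decay of the remainder is exactly what your sketch needs and does not currently supply. A second, smaller slip: $\mu_n\bigl([0,\infty)\bigr)=f(0^+)$ gives uniformly bounded total mass, which is not tightness (mass may still escape to infinity along the sequence). Fortunately tightness is not actually needed: Helly yields a vaguely convergent subsequence, testing against $e^{-tx}\in C_0\bigl([0,\infty)\bigr)$ gives $\mathcal{L}(\mu)=f$, and then letting $t\to 0^+$ in the representation via monotone convergence recovers $\mu\bigl([0,\infty)\bigr)=f(0^+)$, so no mass was in fact lost.
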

\noindent
The next goal  is to discuss useful pointwise estimates on  completely monotone functions.
\begin{lemma}\label{lem-high-deriv}
 The following assertions hold true.
\begin{enumerate}[(1)]
\item Let $f:(0,\infty)\mapsto \R$ be a completely monotone function. Then, for any $n\in\N$ and $\alpha\in(0,1)$ we have
\begin{align*}
  \forall\, t>0,\quad t^n|f^{(n)}(t)|\leqslant (\tfrac{n}{1-\alpha})^n f(\alpha{t}).
\end{align*}
\item Consider $f:(0,\infty)\mapsto \R$ such that $-f^\prime$   is completely monotone. Then, we have
\begin{align*}
  \forall\,0<t_1\leqslant t_2, \quad  0\leqslant f(t_1)-f(t_2)\leqslant (t_1-t_2)f^\prime(t_1).
\end{align*}
\end{enumerate}
\end{lemma}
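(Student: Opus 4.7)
The proof  rests on Bernstein's representation (Theorem~\ref{lem:bernstein}) together with a single elementary pointwise inequality for part~(1), and on the convexity/monotonicity dictated by complete monotonicity of $-f'$ for part~(2).

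For assertion~(1), the plan is to write $f$ as the Laplace transform of a nonnegative measure $\mu$ on $[0,\infty)$, differentiate under the integral sign, and reduce the claim to a scalar inequality. Concretely, from $f(t)=\int_0^\infty e^{-tx}\dd\mu(x)$ I would deduce
\begin{equation*}
  (-1)^n f^{(n)}(t)=\int_0^\infty x^n e^{-tx}\dd\mu(x),
\end{equation*}
so that $|f^{(n)}(t)|=(-1)^n f^{(n)}(t)$ since the integrand is nonnegative. It then suffices to prove the pointwise bound
\begin{equation*}
  (tx)^n e^{-tx}\leqslant \Big(\tfrac{n}{1-\alpha}\Big)^n e^{-\alpha tx},\qquad t,x>0.
\end{equation*}
Setting $u\triangleq (1-\alpha)tx$, this reduces to $u^n e^{-u}\leqslant n^n$ for all $u\geqslant 0$, which is immediate by calculus (the function $u\mapsto u^n e^{-u}$ attains its maximum at $u=n$, giving the even sharper bound $n^n e^{-n}$). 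Integrating the pointwise inequality against $\dd\mu(x)$ yields the stated estimate. The integrability of the bounding integrand, needed to justify differentiation under the integral, follows from the finiteness of $f(\alpha t)$ for $\alpha\in(0,1)$.

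For assertion~(2), the plan is purely qualitative. Since $-f'$ is completely monotone, it is in particular nonnegative, so $f$ is non-increasing on $(0,\infty)$; this gives at once $f(t_1)-f(t_2)\geqslant 0$ whenever $t_1\leqslant t_2$. Moreover, complete monotonicity of $-f'$ implies that $(-f')'=-f''\leqslant 0$, i.e.\ $f''\geqslant 0$, so $f$ is convex on $(0,\infty)$. The tangent line inequality for convex functions then yields
\begin{equation*}
  f(t_2)\geqslant f(t_1)+f'(t_1)(t_2-t_1),
\end{equation*}
and rearranging gives exactly $f(t_1)-f(t_2)\leqslant (t_1-t_2)f'(t_1)$, as desired.

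There is no substantive obstacle: part~(1) is entirely a matter of combining Bernstein's theorem with the elementary extremum $\max_{u\geqslant 0}u^n e^{-u}=n^n e^{-n}$, while part~(2) is a one-line consequence of convexity once monotonicity of $-f'$ is recognized. The only minor care needed is to ensure that the integrals in part~(1) are absolutely convergent, which is handled by choosing $\alpha'\in(\alpha,1)$ if a safety margin is required and invoking $f(\alpha' t)<\infty$.
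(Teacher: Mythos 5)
Your proof is correct. For assertion (1), your argument is essentially identical to the paper's: you both pass through the Bernstein representation $f(t)=\int_0^\infty e^{-tx}\,\dd\mu(x)$, differentiate under the integral sign, and reduce to a one-variable elementary inequality. The paper phrases it as $s^n\leqslant(\tfrac{n}{1-\alpha})^n e^{(1-\alpha)s}$ for $s\geqslant 0$; your substitution $u=(1-\alpha)tx$ followed by $u^n e^{-u}\leqslant n^n$ is the same inequality in disguise, so the two are interchangeable.

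For assertion (2), you take a genuinely different and slightly cleaner route. The paper uses Bernstein's theorem a second time: it integrates $-f'(t)=\int_0^\infty e^{-tx}\,\dd\mu(x)$ to get the representation of $f(t_1)-f(t_2)$ as $\int_0^\infty e^{-t_1 x}\,\tfrac{1-e^{-(t_2-t_1)x}}{x}\,\dd\mu(x)$, and then applies the elementary bounds $0\leqslant 1-e^{-s}\leqslant s$ pointwise in the integrand. You instead read off two derivative conditions directly from complete monotonicity of $-f'$: $f'\leqslant 0$ gives monotonicity (hence the lower bound), and $f''\geqslant 0$ gives convexity (hence the tangent-line inequality $f(t_2)\geqslant f(t_1)+f'(t_1)(t_2-t_1)$, which rearranges to the upper bound). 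Your argument avoids the integral representation entirely and is therefore slightly more elementary, while the paper's carries a little more information (an exact integral identity for the difference, from which refined estimates could be extracted). Both are fully valid.
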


\begin{proof}[Proof of Lemma \ref{lem-high-deriv}]
(1) By differentiation, we get
\begin{align*}
  t^nf^{(n)}(t)=(-1)^n\int_{0}^{\infty}(tx)^ne^{-tx}\dd \mu(x).
\end{align*}
Now, we use the inequality
\begin{align*}
  \forall s\geqslant 0, \quad  s^n\leqslant (\tfrac{n}{1-\alpha})^n e^{(1-\alpha)s},
\end{align*}
in order to get
\begin{align*}
  t^n|f^{(n)}(t)|&\leqslant (\tfrac{n}{1-\alpha})^n \int_{0}^{\infty}e^{-\alpha tx}\dd \mu(x)\\
  &\leqslant (\tfrac{n}{1-\alpha})^n f(\alpha t).
\end{align*}
(2) Using the identity \eqref{Expression-K} yields for any $a>0$ and $t>0$,
\begin{align*}
  f(t)  =&f(a)+\int_0^{\infty}\frac{e^{-t x}-e^{-ax}}{x}\dd {\mu}(x).
\end{align*}
Let $0<t_1\leqslant t_2$, then
\begin{align*}
  f(t_1)-f(t_2)=\int_{0}^{\infty}e^{-t_1x}\,\frac{1-e^{-(t_2-t_1)x}}{x}\dd \mu(x).
\end{align*}
At this stage we use the inequality
\begin{align*}
  \forall\, s\geqslant0,\quad 0\leqslant 1-e^{-s}\leqslant s,
\end{align*}
which implies that
\begin{align*}
  0\leqslant f(t_1)-f(t_2)\leqslant (t_2-t_1)\int_{0}^{\infty}e^{-t_1x}\dd \mu(x)=(t_1-t_2)f^\prime(t_1).
\end{align*}
This achieves the proof of the  desired result.
\end{proof}

\noindent Next, we intend to discuss the propagation of higher regularity/integrability of completely monotone functions, that will be used later.
\begin{lemma}\label{lem:int}
The following statements hold true.
\begin{enumerate}[(1)]
\item Assume that $f$ is a completely monotone function satisfying
\begin{equation}\label{eq:f-cond1}
  \int_0^{t_0} |f(t)| t^\beta \dd t <\infty,\quad \textrm{for some}\; \beta\in (-1,\infty)\;\textrm{and }\; t_0>0,
\end{equation}
then we have
\begin{align*}
  \int_0^{t_0}|f^{(k)}(t)| t^{k+\beta}\dd t \leqslant C_{k,\beta}\int_0^{t_0}f(t)t^\beta\dd t,\quad \forall k\in \mathbb{N}.
\end{align*}
\item
Assume that $f$ is a smooth function satisfying \eqref{eq:f-cond1} and $f'$ is with constant sign, then we have
\begin{align}\label{es:f-weig}
  \int_0^{t_0}|f'(t)| t^{1+\beta}\dd t\leqslant
  (1+\beta) \int_0^{t_0}|f(t)| t^\beta \dd x + |f(t_0)|t_0^{1+\beta}.
\end{align}
\end{enumerate}
\end{lemma}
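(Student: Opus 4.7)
I would apply Bernstein's theorem (Theorem \ref{lem:bernstein}) to write $f(t)=\int_0^\infty e^{-tx}\dd\mu(x)$ with a nonnegative Borel measure $\mu$. Since $(-1)^k f^{(k)}\geqslant 0$, differentiation under the integral sign gives $|f^{(k)}(t)|=\int_0^\infty x^k e^{-tx}\dd\mu(x)$. Fubini's theorem together with the substitution $u=tx$ in the inner $t$-integral rewrites both sides of the desired inequality in the common form $\int_0^\infty x^{-\beta-1}\bigl(\int_0^{t_0 x}e^{-u}u^{j+\beta}\dd u\bigr)\dd\mu(x)$, with $j=k$ on the left-hand side and $j=0$ on the right-hand side. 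The claim therefore reduces to the pointwise elementary bound
\begin{equation*}
\int_0^T e^{-u}u^{k+\beta}\dd u \;\leqslant\; C_{k,\beta}\int_0^T e^{-u}u^{\beta}\dd u,\quad \forall\,T>0.
\end{equation*}
The ratio of its two sides is continuous on $(0,\infty)$, is equivalent to $\tfrac{\beta+1}{k+\beta+1}T^{k}\to 0$ as $T\to 0^+$, and converges to $\Gamma(k+\beta+1)/\Gamma(\beta+1)$ as $T\to\infty$; hence it is bounded, and its supremum furnishes $C_{k,\beta}$.

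\textbf{Part (2).} Since $f'$ has constant sign, $\int_0^{t_0}|f'(t)|t^{1+\beta}\dd t = \bigl|\int_0^{t_0}f'(t)t^{1+\beta}\dd t\bigr|$, and integration by parts gives
\begin{equation*}
\int_0^{t_0}f'(t)t^{1+\beta}\dd t \;=\; f(t_0)t_0^{1+\beta} \,-\, \lim_{t\to 0^+}f(t)t^{1+\beta} \,-\, (1+\beta)\int_0^{t_0}f(t)t^{\beta}\dd t.
\end{equation*}
The triangle inequality then produces \eqref{es:f-weig} as soon as the boundary contribution $f(t)t^{1+\beta}$ is shown to vanish at $0$. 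To establish $\lim_{t\to 0^+}|f(t)|t^{1+\beta}=0$, I would use that $f$ is monotonic, hence has constant sign on some right neighborhood $(0,t_*)$ of $0$, so that $|f|$ is itself monotonic there. If $|f|$ is bounded near $0$, the conclusion is immediate from $\beta+1>0$; otherwise $|f|$ is necessarily decreasing and tends to $+\infty$ at $0$, in which case its monotonicity yields $\int_0^t|f(s)|s^\beta\dd s\geqslant|f(t)|\tfrac{t^{\beta+1}}{\beta+1}$ for all $t\in(0,t_*)$, whose left-hand side vanishes as $t\to 0^+$ by \eqref{eq:f-cond1}.

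\textbf{Main obstacle.} The only genuinely delicate step is the vanishing of the boundary contribution at $0$ in part (2), where the integrability hypothesis \eqref{eq:f-cond1} alone is not sufficient and must be complemented by the one-sided monotonicity of $|f|$ near $0$ through the dichotomy above. Part (1), by contrast, is essentially routine once the Bernstein representation and the substitution $u=tx$ have reduced the problem to a pointwise bound on an elementary ratio of incomplete gamma-type integrals.
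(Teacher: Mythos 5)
Your proof is correct. Part (2) is essentially identical to the paper's argument: both reduce to showing the boundary term $f(t)t^{1+\beta}$ vanishes at the origin, and both handle this by splitting according to whether $|f|$ is bounded near $0$ or not, using the monotonicity of $|f|$ near the origin together with the integrability hypothesis in the unbounded case.

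Part (1) follows a genuinely different route. The paper invokes its Lemma~\ref{lem-high-deriv}(1), which provides the pointwise bound $t^k|f^{(k)}(t)|\leqslant C_k f(t/2)$, and then concludes in two lines after a change of variables. You instead go back to the Bernstein representation $f(t)=\int_0^\infty e^{-tx}\,\dd\mu(x)$, apply Fubini and the scaling $u=tx$, and reduce the inequality to the scalar estimate $\int_0^T e^{-u}u^{k+\beta}\dd u\leqslant C_{k,\beta}\int_0^T e^{-u}u^{\beta}\dd u$ uniformly in $T>0$, which you verify by checking the two limiting regimes of the ratio. Both arguments ultimately rest on Bernstein's theorem (the paper uses it to prove Lemma~\ref{lem-high-deriv}), so they are conceptually close, but the paper's version is shorter since it factors the core estimate through an already-proven pointwise lemma, while yours is self-contained and incidentally identifies the limit $\Gamma(k+\beta+1)/\Gamma(\beta+1)$ as the asymptotically sharp constant for large $T$.
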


\begin{proof}[Proof of Lemma \ref{lem:int}]
(1)
By virtue of Lemma \ref{lem-high-deriv}-(1) and the decreasing of $f$, we infer
\begin{align*}
	\int_0^{t_0}|f^{(k)}(t)| t^{k+\beta}\dd t\leqslant C_k\int_0^{t_0}f(\tfrac{t}{2})t^{\beta}\dd t\leqslant C_{k,\beta}\int_0^{t_0}f(t)t^\beta\dd t.
\end{align*}
Without loss of generality, we may suppose that $f'$ is non-positive, then $f$ is non-increasing.
If $\lim_{t\to 0^+}f(t)<\infty$, then $\lim_{t\to 0^+}f(t) t^{1+\beta}=0$. However
 if $\lim_{t\to 0^+}f(t)=\infty$, then $f(t)>0$ for sufficiently small $t>0$, and thus
\begin{align*}
  0\leqslant \lim_{t\to 0^+}f(t)t^{1+\beta} \leqslant \lim_{t\to 0^+}(1+\beta)\int_0^t f(s)s^{\beta}\dd s=0.
\end{align*}
Using  integration by parts we see that
\begin{equation*}
	\begin{split}
		-\int_0^{t_0}  f'(t) t^{1+\beta} \dd t
		& = - t^{1+\beta}_0 f(t_0) + \lim_{t\to 0^+} t^{1+\beta} f(t)  + (1+\beta) \int_0^{t_0} f(t) t^\beta \dd t \\
		& \leqslant  |f(t_0)|t_0^{1+\beta}+ (1+\beta) \int_0^{t_0} |f(t)| t^\beta \dd t,
	\end{split}
\end{equation*}
which yields the desired estimate \eqref{es:f-weig}.
\end{proof}
Next, we shall discuss  a result  which will be used frequently in the paper.
\begin{lemma}\label{lem:int2}
If $-f'$ is a completely monotone function on $(0,\infty)$ and $f$ satisfies \eqref{eq:f-cond1},
then for any $\alpha\in [0,1]$, $m,n\in\mathbb{N}^\star$, $t_0, c_1, c_2>0$, we have
\begin{align}\label{eq:int2-1}
  \int_0^{t_0}|f^{(m)}(c_1t)|^{\alpha}|f^{(n)}(c_2t)|^{1-\alpha}t^{m\alpha + n(1-\alpha)+\beta }\dd t
  \leqslant C\bigg(\int_0^{t_0}|f(t)| t^{\beta} \dd t
  + |f(t_0)| t_0^{1+\beta}\bigg),
\end{align}
and
\begin{align}\label{eq:int2-2}
  \int_0^{t_0}|f(c_1 t)|^\alpha|f^{(n)}(c_2t)|^{1-\alpha}t^{n(1-\alpha)+\beta }\dd t
  \leqslant C\bigg(\int_0^{(c_1\vee 1) t_0}|f(t)| t^\beta \dd t
  + |f(t_0)| t_0^{1+\beta}\bigg),
\end{align}
with $c_1\vee 1 \triangleq \max\{c_1,1\}$ and the constant $C>0$ depends on $m,n,\alpha,\beta,c_1,c_2$.
\end{lemma}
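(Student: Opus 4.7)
The strategy is to apply H\"older's inequality in order to decouple the two derivative factors, change variables to absorb the scalings $c_1, c_2$, and then invoke Lemma \ref{lem:int} twice to reduce the resulting integrals down to weighted $|f|$-integrals plus boundary contributions. For \eqref{eq:int2-1}, the starting algebraic observation is
\begin{equation*}
  m\alpha + n(1-\alpha) + \beta = \alpha(m+\beta) + (1-\alpha)(n+\beta),
\end{equation*}
which allows the weight to be factored as $t^{m\alpha+n(1-\alpha)+\beta} = (t^{m+\beta})^{\alpha}(t^{n+\beta})^{1-\alpha}$. Hence H\"older's inequality with conjugate exponents $1/\alpha$ and $1/(1-\alpha)$ yields
\begin{equation*}
  \int_0^{t_0} |f^{(m)}(c_1 t)|^{\alpha} |f^{(n)}(c_2 t)|^{1-\alpha} t^{m\alpha + n(1-\alpha)+\beta}\,\dd t \leqslant I_1^{\alpha} I_2^{1-\alpha},
\end{equation*}
where $I_j \triangleq \int_0^{t_0} |f^{(k_j)}(c_j t)|\, t^{k_j + \beta}\,\dd t$ with $(k_1,k_2)=(m,n)$.

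For each $I_j$, I would change variables $s = c_j t$ to produce $c_j^{-(k_j+\beta+1)} \int_0^{c_j t_0} |f^{(k_j)}(s)| s^{k_j+\beta}\,\dd s$. Next, since $-f'$ is completely monotone by hypothesis, I apply Lemma \ref{lem:int}(1) to the function $g\triangleq -f'$ with parameter shift $k \mapsto k_j-1$ and exponent $\gamma \mapsto 1+\beta$ (which lies in $(-1,\infty)$ since $\beta>-1$); this gives
\begin{equation*}
  \int_0^{T} |f^{(k_j)}(s)| s^{k_j+\beta}\,\dd s \leqslant C \int_0^{T} |f'(s)|\, s^{1+\beta}\,\dd s, \quad T=c_j t_0 .
\end{equation*}
Since $f'$ has constant sign (being the negative of a non-negative function), Lemma \ref{lem:int}(2) then bounds the right-hand side by $(1+\beta)\int_0^{T}|f(s)|\,s^{\beta}\,\dd s + |f(T)|\,T^{1+\beta}$. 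The required precondition $\int_0^{T}|f'(s)|s^{1+\beta}\,\dd s<\infty$ is itself verified by this same inequality together with hypothesis \eqref{eq:f-cond1}.

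Combining the two factors with the elementary inequality $a^{\alpha}b^{1-\alpha}\leqslant a+b$ for $a,b\geqslant 0$, one arrives at
\begin{equation*}
  \text{LHS of } \eqref{eq:int2-1} \leqslant C \sum_{j=1,2}\bigg(\int_0^{c_j t_0} |f(s)|\,s^\beta\,\dd s + |f(c_j t_0)|\,(c_j t_0)^{1+\beta} \bigg),
\end{equation*}
with prefactors depending on $c_j,m,n,\alpha,\beta$. To match the stated right-hand side, I would then absorb these prefactors into $C$ and compare each term with $\int_0^{t_0}|f(t)|t^\beta\,\dd t$ and $|f(t_0)|t_0^{1+\beta}$ by splitting according to whether $c_j\leqslant 1$ or $c_j>1$; this step uses the strict monotonicity of $f$ inherited from the sign of $-f'$ to handle the boundary contributions, and is the principal technical subtlety of the proof. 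The estimate \eqref{eq:int2-2} follows the same template, starting from the decomposition $t^{n(1-\alpha)+\beta} = (t^\beta)^{\alpha}(t^{n+\beta})^{1-\alpha}$; in this case the factor $|f(c_1 t)|^{\alpha}$ requires no reduction via Lemma \ref{lem:int} and its change of variables directly produces the range $[0,c_1 t_0]$, which accounts for the explicit $(c_1\vee 1)t_0$ upper limit appearing in the conclusion.
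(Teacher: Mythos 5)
Your overall strategy---H\"older to decouple, then reduce each factor to a single weighted $|f'|$-integral via Lemma~\ref{lem:int}---can be made to work, and the paper itself uses precisely this H\"older route for \eqref{eq:int2-2}. For \eqref{eq:int2-1}, however, the paper does not use H\"older at all: it applies the pointwise bound from Lemma~\ref{lem-high-deriv}-(1) together with the fact that $|f'|$ is non-increasing to obtain
\begin{equation*}
|f^{(m)}(c_1t)|^{\alpha}|f^{(n)}(c_2t)|^{1-\alpha}\, t^{m\alpha+n(1-\alpha)+\beta}\leqslant C_{m,n,c_1,c_2}\,|f'(ct)|\,t^{1+\beta},\qquad c=\tfrac12\min\{c_1,c_2\},
\end{equation*}
and then integrates \emph{once}. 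That different decomposition has a concrete advantage in the final step, which is exactly where your write-up has a gap.

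The gap is in the rescaling step you flag as ``the principal technical subtlety.'' You change variables $s=c_j t$ first, producing $c_j^{-(k_j+\beta+1)}\int_0^{c_j t_0}|f^{(k_j)}(s)|s^{k_j+\beta}\dd s$, apply Lemma~\ref{lem:int} on $[0,c_j t_0]$, and then invoke ``the strict monotonicity of $f$'' to compare back to $[0,t_0]$. Monotonicity of $f$ itself is not the right mechanism and does not close the argument: when $c_j>1$ the interval $[0,c_j t_0]$ strictly contains $[0,t_0]$, and $\int_0^{c_j t_0}|f(s)|s^\beta\dd s$ is not controlled by $\int_0^{t_0}|f(s)|s^\beta\dd s$; when $c_j<1$ the boundary term Lemma~\ref{lem:int}-(2) produces is $|f(c_j t_0)|(c_j t_0)^{1+\beta}$, and since $f$ is non-increasing one only knows $|f(c_j t_0)|\geqslant |f(t_0)|$ (when $f\geqslant 0$) while $(c_j t_0)^{1+\beta}\leqslant t_0^{1+\beta}$, so there is no pointwise comparison to $|f(t_0)|t_0^{1+\beta}$. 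The correct order of operations, which is precisely the content of the paper's \eqref{eq:int-fk-fact}, is to reduce the interval to $[0,t_0]$ \emph{before} invoking Lemma~\ref{lem:int}: when $c_j\geqslant 1$, do not change variables at all but use the pointwise monotonicity of $|f^{(k_j)}|=(-1)^{k_j}f^{(k_j)}$ to get $|f^{(k_j)}(c_j t)|\leqslant|f^{(k_j)}(t)|$ in the integrand; when $c_j\leqslant 1$, change variables and simply enlarge $\int_0^{c_j t_0}\leqslant\int_0^{t_0}$. Only afterwards apply Lemma~\ref{lem:int}-(1)--(2) on the single interval $[0,t_0]$, so the boundary term is always evaluated at $t_0$. (So it is monotonicity of the higher derivatives $|f^{(k_j)}|$, not of $f$, that is being used, and it must be used before the change of variables, not after.) This reordering is what your proof is missing; for \eqref{eq:int2-2} the same reduction is needed for the $|f^{(n)}(c_2 t)|$ factor, while your treatment of the $|f(c_1 t)|$ factor (where the enlarged range $(c_1\vee 1)t_0$ is explicitly allowed in the statement) is fine.
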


\begin{proof}[Proof of Lemma \ref{lem:int2}]
First, by virtue of Lemma \ref{lem:int},
we have that for $k\in \mathbb{N}^\star$,
\begin{align}\label{eq:fk-der-int}
  \int_0^{t_0}|f^{(k)}(t)|t^{k+\beta}\dd t\leqslant C \int_0^{t_0} \big(-f'(t)\big)t^{1+\beta} \dd t
 \leqslant C \Big(\int_0^{t_0}|f(t)|t^\beta \dd t + t_0^{1+\beta} | f(t_0)|\Big).
\end{align}
Since $(-1)^k f^{(k)}$ is non-negative and non-increasing,
then we get for $k\in\mathbb{N}^\star$ and  $c_1>0$,
\begin{equation}\label{eq:int-fk-fact}
\begin{split}
  \int_0^{t_0} |f^{(k)}(c_1 t)| t^{k+\beta} \dd t &\leqslant
  \begin{cases}
    \int_0^{t_0} (-1)^k f^{(k)}(t) t^{k+\beta} \dd t,\quad & \textrm{if}\;\; c_1\geqslant 1, \\
    c_1^{-k-1-\beta} \int_0^{c_1 t_0} (-1)^k f^{(k)}(t) t^{k+\beta} \dd t,
    \quad & \textrm{if}\;\; c_1 \leqslant 1,
  \end{cases} \\
  & \leqslant \max\big\{1, c_1^{-k-1-\beta}\big\} \int_0^{t_0}  |f^{(k)}(t)| t^{k+\beta} \dd t,
\end{split}
\end{equation}
and
\begin{align*}
  \int_0^{t_0} |f(c_1 t)| t^\beta \dd t = c_1^{\beta-1} \int_0^{c_1 t_0} |f(t)| t^\beta \dd t.
\end{align*}
Choosing $c=\tfrac{1}{2}\min\{c_1,c_2\}$, by Lemma \ref{lem-high-deriv}-(1) and the fact that $|f'(t)|$ is non-increasing, we have
\begin{align*}
  |f^{(m)}(c_1t)|^{\alpha}|f^{(n)}(c_2t)|^{1-\alpha}t^{m\alpha + n(1-\alpha)+\beta}\leqslant C_{m,n}|f'(ct)|t^{\beta}.
\end{align*}
Then \eqref{eq:int2-1} is a consequence of \eqref{eq:fk-der-int} and \eqref{eq:int-fk-fact}.
\\[0.5mm]
Now,  we move  to the proof of  \eqref{eq:int2-2}.
Using H\"older's inequality and Lemma \ref{lem-high-deriv}, we can deduce that for every $n\in\mathbb{N}^\star$,
\begin{align*}
  &\int_0^{t_0}|f(c_1t)|^{\alpha}|f^{(n)}(c_2t)|^{1-\alpha}t^{n(1-\alpha)+\beta }\dd t \\
  &\leqslant \bigg(\int_0^{t_0}|f(c_1t)t^{\beta}|\dd t\bigg)^\alpha
  \bigg(\int_0^{t_0}|f^{(n)}(c_2t)|t^{n+\beta}\dd t \bigg)^{1-\alpha}\\
  & \leqslant C \bigg(\int_0^{c_1t_0}|f(t) | t^{\beta} \dd t \bigg)^\alpha
  \bigg(\int_0^{t_0}|f^{(n)}(t)| t^{n+\beta}\dd t \bigg)^{1-\alpha} \\
  & \leqslant C \bigg(|t_0^{1+\beta}f(t_0)| +
  \int_0^{(c_1\vee 1)t_0}|f(t)t^{\beta}|\dd t\bigg),
\end{align*}
which corresponds to \eqref{eq:int2-2}.
\end{proof}

\subsection{Boundedness property of some operators on the torus}

In this subsection, we give  useful estimates for the following integral operator
\begin{align}\label{eq:int-operator}
  \mathcal{T}f(\theta)\triangleq \int_{\T}\mathbb{K}(\theta,\eta)f(\eta)\dd \eta,
\end{align}
where $\T=\R/{2\pi\Z}$ is the  torus, $\mathbb{K}:\T\times \T\to \C$ is the kernel function,
and $f$ is a $2\pi$-periodic function.
\begin{lemma}\label{lem:int-operator}
Let $\alpha\in (0,1)$, $n\in \mathbb{N}^\star$.
Assume the existence of  $C > 0$ and functions $H_1(\cdot)$, $\cdots$, $H_{n+1}(\cdot)$ satisfying
\begin{equation}\label{assum:K-H}
\begin{aligned}
  \int_{\T} H_k\big(\big|\sin \tfrac{\eta}{2}\big|\big)\dd \eta\leqslant C, \quad \forall k=1,2,\cdots, n,\\
  \int_{\T}  \big|H_{n}\big(|\sin \tfrac{\eta}{2}|\big)\big|^\alpha\,
  \big| H_{n+1} \big(|\sin \tfrac{\eta}{2}|\big)\big|^{1-\alpha} \dd \eta \leqslant C,
\end{aligned}
\end{equation}
such that $\mathbb{K}:\T\times \T\to \C$ satisfy
the following properties.
\begin{enumerate}[(1)]
\item $\mathbb{K}$ is measurable on $\T\times \T \setminus \{(\theta,\theta),\theta\in \T\}$ and
\begin{align*}
  |\mathbb{K}(\theta,\theta+ \eta)|\leqslant H_1\big(\big|\sin \tfrac{\eta}{2}\big|\big).
\end{align*}
\item For each $\eta\in \T$, the mapping $\theta\mapsto \mathbb{K}(\theta,\theta+\eta)$ is $n$-times differentiable in $\T$ and
\begin{align*}
  |\partial^{k}_\theta \big(\mathbb{K}(\theta,\theta+\eta)\big)|\leqslant H_{k+1}\big(\big|\sin \tfrac{\eta}{2} \big| \big),\quad  \forall k=1,\cdots,n.
\end{align*}
\end{enumerate}
Then the linear integral operator $\mathcal{T}$ given by \eqref{eq:int-operator}
is continuous from $C^{n-\alpha}(\mathbb{T})$ to $C^{n-\alpha}(\mathbb{T})$ and
\begin{align}\label{eq:T-Hold-es}
  \lVert \mathcal{T}f\rVert_{C^{n-\alpha}(\mathbb{T})}\le C_nC\lVert f\rVert_{C^{n-\alpha}(\mathbb{T})}.
\end{align}
\end{lemma}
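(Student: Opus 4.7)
The plan is to exploit the translation structure by the change of variable $\eta' = \theta + \eta$, which rewrites
\[
\mathcal{T}f(\theta) = \int_{\T} g(\theta,\eta) f(\theta+\eta)\,\dd\eta, \qquad g(\theta,\eta) := \mathbb{K}(\theta,\theta+\eta),
\]
so the hypotheses on $\mathbb{K}$ translate directly into pointwise bounds $|\partial_\theta^k g(\theta,\eta)| \leqslant H_{k+1}(|\sin\tfrac{\eta}{2}|)$ for $k=0,1,\ldots,n$. The pointwise bounds combined with $\int_{\T} H_k(|\sin\tfrac{\eta}{2}|)\dd\eta \leqslant C$ for $k\leqslant n$ justify differentiation under the integral via dominated convergence, yielding for every $j\leqslant n$ the Leibniz-type formula
\[
\partial_\theta^{j}\mathcal{T}f(\theta) = \sum_{k=0}^{j}\binom{j}{k}\int_{\T}\partial_\theta^{k} g(\theta,\eta)\, f^{(j-k)}(\theta+\eta)\,\dd\eta.
\]
For $j\leqslant n-1$, the $L^\infty$ bound follows immediately: $\|\partial_\theta^{j}\mathcal{T}f\|_{L^\infty}\leqslant \sum_{k=0}^{j}\binom{j}{k}\|f^{(j-k)}\|_{L^\infty}\int_{\T}H_{k+1}\dd\eta \leqslant C\|f\|_{C^{n-\alpha}}$, since each $k+1\leqslant n$.

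The delicate step is the $(1-\alpha)$-Hölder estimate of $\partial_\theta^{n-1}\mathcal{T}f$. For a shift $h$, I split each term in the Leibniz sum as $A_k+B_k$, where
\[
A_k = \int_{\T}\bigl[\partial_\theta^{k}g(\theta+h,\eta)-\partial_\theta^{k}g(\theta,\eta)\bigr]f^{(n-1-k)}(\theta+h+\eta)\,\dd\eta,
\]
and $B_k$ is the symmetric term with the $f$-difference. The terms $B_k$ are easy: for $k<n-1$, $f^{(n-1-k)}$ is Lipschitz with norm controlled by $\|f\|_{C^{n-\alpha}}$, giving $|B_k|\leqslant C|h|\|f\|_{C^{n-\alpha}}\leqslant C|h|^{1-\alpha}\|f\|_{C^{n-\alpha}}$; for $k=n-1$, $f$ itself is $(1-\alpha)$-Hölder and $\int H_n<\infty$, yielding $|B_{n-1}|\leqslant C|h|^{1-\alpha}\|f\|_{C^{n-\alpha}}$.

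The true obstacle lies in $A_k$, because the derivative bound $H_{k+2}$ may fail to be integrable near $\eta=0$ when $k+2=n+1$, so the naive estimate $|\partial_\theta^{k}g(\theta+h,\eta)-\partial_\theta^{k}g(\theta,\eta)|\leqslant |h| H_{k+2}$ is insufficient. The key idea is to interpolate between the bound just mentioned and the trivial bound $2H_{k+1}$: using $\min(a,b)\leqslant a^{\alpha}b^{1-\alpha}$,
\[
|\partial_\theta^{k}g(\theta+h,\eta)-\partial_\theta^{k}g(\theta,\eta)| \leqslant 2^{\alpha}|h|^{1-\alpha}\,H_{k+1}(|\sin\tfrac{\eta}{2}|)^{\alpha}\,H_{k+2}(|\sin\tfrac{\eta}{2}|)^{1-\alpha}.
\]
For the critical index $k=n-1$, the integrability of $H_n^{\alpha}H_{n+1}^{1-\alpha}$ is precisely the second hypothesis in \eqref{assum:K-H}. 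For $k<n-1$, both $H_{k+1}$ and $H_{k+2}$ are integrable (since $k+2\leqslant n$), and Hölder's inequality with conjugate exponents $1/\alpha$, $1/(1-\alpha)$ gives $\int H_{k+1}^{\alpha}H_{k+2}^{1-\alpha}\dd\eta \leqslant (\int H_{k+1})^{\alpha}(\int H_{k+2})^{1-\alpha}\leqslant C$ for free. Thus $|A_k|\leqslant C|h|^{1-\alpha}\|f\|_{L^\infty}\leqslant C|h|^{1-\alpha}\|f\|_{C^{n-\alpha}}$, and summing over $k$ completes the proof of \eqref{eq:T-Hold-es}.
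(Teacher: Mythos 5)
Your proof is correct and rests on the same core idea as the paper's: the interpolation $|\Delta_h(\partial_\theta^k g)|\leqslant |\Delta_h(\partial_\theta^k g)|^{\alpha}\,|\Delta_h(\partial_\theta^k g)|^{1-\alpha}\leqslant C\,H_{k+1}^{\alpha}\,(H_{k+2}|h|)^{1-\alpha}$, which is exactly why the second condition in \eqref{assum:K-H} is needed only at the top index $k=n-1$; the paper organizes this as an induction on $n$ (with the key interpolation appearing in the base case $n=1$), while you take all $n-1$ derivatives in one shot via Leibniz and apply the interpolation termwise, which is an equivalent reorganization. One small slip worth correcting: for $n\geqslant 2$ and $k=0$ the function $f^{(n-1-k)}=f^{(n-1)}$ is only $(1-\alpha)$-H\"older, not Lipschitz as you assert for all $k<n-1$, but since $[f^{(n-1)}]_{C^{1-\alpha}}\leqslant\|f\|_{C^{n-\alpha}}$ the estimate $|B_0|\leqslant C|h|^{1-\alpha}\|f\|_{C^{n-\alpha}}$ still holds and nothing else is affected.
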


\begin{proof}[Proof of Lemma \ref{lem:int-operator}]
The proof is by the induction method.
Making change of variables gives
\begin{align}\label{eq:Tf-exp2}
  \mathcal{T}f(\theta) = \int_{\T}\mathbb{K}(\theta,\theta+\eta)f(\theta+\eta)\dd \eta.
\end{align}
First, we start with  the  case $n=1$.
From \eqref{assum:K-H} we have
\begin{align*}
  |\mathcal{T}f(\theta)|\leqslant & \lVert f\rVert_{L^{\infty}} \int_{\T} |\mathbb{K}(\theta,\theta+\eta)|\dd \eta \\
  \leqslant & 
  \lVert f\rVert_{L^{\infty}} \int_{\T} H_1\big(\big|\sin \tfrac{\eta}{2} \big|\big)\dd \eta
  \leqslant C \lVert f\rVert_{L^{\infty}}.
\end{align*}
By using interpolation inequalities together with the mean value theorem, we infer
\begin{align*}
  & |\mathbb{K}(\theta_1,\theta_1+\eta)-\mathbb{K}(\theta_2,\theta_2+\eta)| \\
  & \leqslant \Big(|\mathbb{K}(\theta_1,\theta_1+\eta)|^{\alpha}+|\mathbb{K}(\theta_2,\theta_2+\eta)|^{\alpha}\Big)
  |\mathbb{K}(\theta_1,\theta_1+\eta)-\mathbb{K}(\theta_2,\theta_2+\eta)|^{1-\alpha} \\
  & \leqslant \Big(|\mathbb{K}(\theta_1,\theta_1+\eta)|^{\alpha}+|\mathbb{K}(\theta_2,\theta_2+\eta)|^{\alpha}\Big)
  \Big(\int_0^1 \big|\partial_{\theta_\tau}\big(\mathbb{K}(\theta_\tau,\theta_\tau +\eta)\big)\big| \dd \tau \Big)^{1-\alpha}
  |\theta_1-\theta_2|^{1-\alpha} \\
  & \leqslant 2\big|H_1\big(|\sin \tfrac{\eta}{2}|\big)\big|^\alpha\,
  \big| H_2 \big(|\sin \tfrac{\eta}{2}|\big)\big|^{1-\alpha} |\theta_1-\theta_2|^{1-\alpha},
\end{align*}
with $\theta_\tau = \tau \theta_1 + (1-\tau) \theta_2$. It follows that
\begin{align*}
  & |\mathcal{T}f(\theta_1)-\mathcal{T}f(\theta_2)| \leqslant\int_{\T}|\mathbb{K}(\theta_1,\theta_1+\eta)||f(\theta_1+\eta)-f(\theta_2+\eta)|\dd \eta\\
  & \quad +\int_{\T} |\mathbb{K}(\theta_1,\theta_1+\eta)-\mathbb{K}(\theta_2,\theta_2+\eta)|f(\theta_2+\eta)\dd \eta.
\end{align*}
Therefore,
\begin{align*}
   |\mathcal{T}f(\theta_1)-\mathcal{T}f(\theta_2)|
  & \leqslant|\theta_1-\theta_2|^{1-\alpha} \lVert f\rVert_{{C}^{1-\alpha}}
  \int_{\T}H_1\big(\big|\sin \tfrac{\eta}{2}\big|\big)\dd \eta\\
  &\quad + 2|\theta_1-\theta_2|^{1-\alpha}\lVert f\rVert_{\infty}\int_{\T}\big|H_1\big(|\sin \tfrac{\eta}{2}|\big)\big|^\alpha\,
  \big| H_2 \big(|\sin \tfrac{\eta}{2}|\big)\big|^{1-\alpha}\dd\eta \\
  & \leqslant 2C |\theta_1-\theta_2|^{1-\alpha}\lVert f\rVert_{C^{1-\alpha}}.
\end{align*}
Hence, combining the above estimates yields the desired inequality \eqref{eq:T-Hold-es} with $n=1$.
\\[0.5mm]
Now assuming that Lemma \ref{lem:int-operator} is true for $n=j$
and for the operator $\mathcal{T}$ given by \eqref{eq:Tf-exp2},
we prove that it also holds for $n=j+1$. Observe that
\begin{align}\label{eq:base-estiamte-T-j}
  \partial_{\theta}(\mathcal{T}f)(\theta) = \int_{\TT}\partial_{\theta}(\mathbb{K}(\theta,\theta+\eta))f(\theta+\eta) \dd \eta
  +\int_{\TT}\mathbb{K}(\theta,\theta+\eta)\partial_{\theta}f(\theta+\eta)\dd \eta.
\end{align}
In view of the fact that
\begin{align*}
  \int_{\TT}\big|H_{j}\big(|\sin \tfrac{\eta}{2}|\big)\big|^\alpha\,
  \big| H_{j+1} \big(|\sin \tfrac{\eta}{2}|\big)\big|^{1-\alpha}\dd \eta
  \leqslant \int_{\TT}\big|H_{j}\big(|\sin \tfrac{\eta}{2}|\big)\big|\dd \eta
  + \int_{\TT}\big|H_{j+1}\big(|\sin \tfrac{\eta}{2}|\big)\big|\dd \eta
  \leqslant C,
\end{align*}
and by the inductive hypothesis, we have
$\lVert \mathcal{T}(f)\rVert_{C^{j-\alpha}(\T)}\leqslant C_jC\lVert f\rVert_{C^{j-\alpha}(\T)}$ and
\begin{align*}
  \Big\lVert\int_{\TT}\mathbb{K}(\theta,\theta+\eta)\partial_{\theta}f(\theta+\eta)\dd \eta\Big\rVert_{C^{j-\alpha}(\T)}
  = \|\mathcal{T}(\partial_\eta f)\|_{C^{j-\alpha}(\T)}\leqslant C_jC\lVert f\rVert_{C^{j+1-\alpha}(\T)}.
\end{align*}
Noting that $\widetilde{\mathbb{K}}(\theta, \theta+\eta) = \partial_\theta(\mathbb{K}(\theta,\theta+\eta))$ satisfies
\begin{align*}
  |\partial_\theta^k\widetilde{\mathbb{K}}(\theta,\theta+ \eta)|\leqslant H_{k+2}\big(\big|\sin \tfrac{\eta}{2}\big|\big)
  \triangleq \widetilde{H}_{k+1}\big(|\sin\tfrac{\eta}{2}|\big), \quad \forall k=0,1,\cdots,j,
\end{align*}
and $\widetilde{H}_k$ ($k=1,2,\cdots,j+1$) satisfies \eqref{assum:K-H} with $n=j$ and $\widetilde{H}_k$ in place of $H_k$,
we use the induction hypothesis to deduce that
\begin{align*}
  \Big\lVert \int_{\TT}\partial_{\theta}(\mathbb{K}(\theta,\theta+\eta))f(\theta+\eta)\dd \eta\Big\rVert_{C^{j-\alpha}(\T)}
  & = \Big\lVert \int_{\TT} \widetilde{\mathbb{K}}(\theta,\theta+\eta) f(\theta+\eta)\dd \eta\Big\rVert_{C^{j-\alpha}(\T)} \\
  & \leqslant C_{j}C\lVert f\rVert_{C^{j-\alpha}(\T)}.
\end{align*}
Hence, we prove that
\begin{align*}
  \|\mathcal{T}f\|_{C^{j+1-\alpha}(\T)} = \|\mathcal{T}f\|_{C^{j-\alpha}(\T)} +
  \lVert \partial_{\theta}\mathcal{T}f\rVert_{C^{j-\alpha}(\T)}\leqslant 3C_jC\lVert f\rVert_{C^{j+1-\alpha}(\TT)}.
\end{align*}
The induction method guarantees that Lemma \ref{lem:int-operator} holds for every $n\in \mathbb{N}^\star$ and $\alpha\in(0,1)$.
\end{proof}

\noindent In the study of the linearized operator done before, we used the following Mikhlin multiplier type theorem
for an operator defined on a periodic function, see for instance   \cite[Theorem 4.5]{AB04}.
\begin{lemma}\label{lem:multiplier-lemma}
Given $\{a_n\}_{n\in \mathbb{Z}}$ and $h\in L^1(\T)$, and define the operator
\begin{align*}
  T h(\theta)=\sum_{n\in \mathbb{Z}} a_n \widehat{h}(n)e^{in\theta},
\end{align*}
where $\widehat{h}(n) = \int_{\T} h(\theta) e^{-in\theta} \dd \theta$ is the $n$-th Fourier coefficient of $h$.
Assume that
\begin{align*}
  \sup_{n\in \Z}|a_n|<\infty,\quad\textrm{and}\quad \sup_{n\in \Z}|n(a_{n+1}-a_n)|<\infty,
\end{align*}
then the operator $T$ is bounded in $C^{k+\alpha}(\T)$, for any $k\in \N$ and $\alpha\in (0,1)$.
\end{lemma}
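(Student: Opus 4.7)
My plan is to reduce the statement to the base case $k=0$ and then establish that case through a Littlewood--Paley analysis on the torus, viewing $T$ as essentially a Calder\'on--Zygmund operator.

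For the reduction, note that $T$ is a Fourier multiplier and therefore commutes with $\partial_\theta$; hence $\partial_\theta^\ell(Th)=T(\partial_\theta^\ell h)$ for every $\ell\in\{0,\ldots,k\}$. Since $\partial_\theta^\ell h\in C^{(k-\ell)+\alpha}\subset C^\alpha$ when $h\in C^{k+\alpha}(\T)$, it suffices to prove the base case $T\in\mathcal{L}(C^\alpha(\T))$ for every $\alpha\in(0,1)$.

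For the base case I would pick a smooth dyadic partition of unity $\{\phi_j\}_{j\ge 0}$ on $\R$, with $\phi_0$ supported in $\{|\xi|\le 2\}$ and $\phi_j$ (for $j\ge 1$) supported in $\{2^{j-1}\le|\xi|\le 2^{j+1}\}$, and define the Littlewood--Paley blocks
\[
\Delta_j h(\theta) := \sum_{n\in\Z}\phi_j(n)\widehat h(n)e^{in\theta}.
\]
The H\"older--Zygmund characterization yields $\|h\|_{C^\alpha}\sim\|\Delta_0 h\|_{L^\infty}+\sup_{j\ge 1}2^{j\alpha}\|\Delta_j h\|_{L^\infty}$. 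Since $T$ commutes with each $\Delta_j$, writing $T\Delta_j h = K_j*h$ with $K_j(\theta):=\sum_n a_n\phi_j(n)e^{in\theta}$, an Abel summation based on the identity $(1-e^{i\theta})K_j(\theta)=\sum_n(b_n^{(j)}-b_{n-1}^{(j)})e^{in\theta}$ (where $b_n^{(j)}:=a_n\phi_j(n)$), together with the Mikhlin bounds $|a_n|\le M$, $|n(a_{n+1}-a_n)|\le M$ and the smoothness $|\phi_j(n)-\phi_j(n-1)|\lesssim 2^{-j}$ on a support of cardinality $\sim 2^j$, produces the pointwise kernel estimate
\[
|K_j(\theta)|\le C\min\{2^j,\,|\theta|^{-1}\}\quad \text{for }|\theta|\le\pi.
\]
Using the zero-mean property $\int_\T K_j=0$ (valid for $j\ge 1$ since $\phi_j(0)=0$) and Bernstein's inequality $\|\partial_\theta\Delta_j h\|_{L^\infty}\lesssim 2^j\|\Delta_j h\|_{L^\infty}$, one writes
\[
T\Delta_j h(\theta)=\int_\T K_j(\eta)\big(\Delta_j h(\theta-\eta)-\Delta_j h(\theta)\big)\,\dd\eta
\]
and bounds the integrand using $|\Delta_j h(\theta-\eta)-\Delta_j h(\theta)|\lesssim \min\{1,2^j|\eta|\}\|\Delta_j h\|_{L^\infty}$; the resulting $L^\infty$ estimate on $T\Delta_j h$, combined with summation in $j$ against the weights $2^{j\alpha}$, will yield $\|Th\|_{C^\alpha(\T)}\le C\|h\|_{C^\alpha(\T)}$.

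The main obstacle is that under the bare Mikhlin hypotheses the kernels $K_j$ are not uniformly in $L^1(\T)$: a direct estimate gives only $\|K_j\|_{L^1}\lesssim j$, so a plain Young's convolution inequality is insufficient. The correct way around it is to combine the vanishing-mean cancellation $\int_\T K_j =0$ with the H\"older smallness $|h(\theta-\eta)-h(\theta)|\lesssim |\eta|^\alpha\|h\|_{C^\alpha}$ (or, equivalently, with the Bernstein smoothing on $\Delta_j h$), producing a dyadic geometric series in $j$ that is convergent precisely because $\alpha>0$; this is the reason why the statement excludes the endpoint $\alpha=0$. An alternative, perhaps shorter, route is to verify that the distributional kernel $K(\theta) = \sum_n a_n e^{in\theta}$ is a genuine Calder\'on--Zygmund kernel on $\T$ using a similar Abel-summation argument on $K_j(\theta)-K_j(\theta')$, and then invoke the classical fact that Calder\'on--Zygmund operators are bounded on $C^\alpha(\T)$ for $\alpha\in(0,1)$, which is the level of generality used in \cite{AB04}.
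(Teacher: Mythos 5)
The paper itself offers no proof of this lemma, citing instead \cite[Theorem 4.5]{AB04}; your attempt is thus a self-contained reproof, and the Littlewood--Paley skeleton you set up (reduce to $k=0$, write $\Delta_j T h=K_j * \tilde\Delta_j h$ with a fattened block $\tilde\Delta_j$, estimate $K_j$) is the standard route to one-dimensional H\"ormander--Mikhlin theorems on Besov spaces. The single Abel summation correctly gives $|K_j(\theta)|\lesssim\min\{2^j,|\theta|^{-1}\}$, and you rightly note that this yields only $\|K_j\|_{L^1(\T)}\lesssim j$. However, the workaround you propose --- pairing $\int_\T K_j=0$ with the Bernstein or H\"older smallness of $\Delta_j h$ --- does not remove this logarithm. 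Inserting $|K_j(\eta)|\lesssim\min\{2^j,|\eta|^{-1}\}$ and $|\Delta_j h(\theta-\eta)-\Delta_j h(\theta)|\lesssim\min\{1,2^j|\eta|\}\|\Delta_j h\|_{L^\infty}$ into your identity, the region $|\eta|\leqslant 2^{-j}$ indeed contributes $O(1)$, but on $2^{-j}\leqslant|\eta|\leqslant\pi$ both the cancellation and the Bernstein gain are inert, and $\int_{2^{-j}}^{\pi}|\eta|^{-1}\,\dd\eta\sim j$ reappears. You are left with $\|\Delta_j Th\|_{L^\infty}\lesssim(1+j)\,2^{-j\alpha}\|h\|_{C^\alpha}$; since $C^\alpha=B^\alpha_{\infty,\infty}$ is normed by a \emph{supremum} over $j$, not a sum, there is no ``summation in $j$'' that absorbs the factor $1+j$. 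The same loss blocks your alternative route (showing $\sum_n a_n e^{in\theta}$ is a Calder\'on--Zygmund kernel with $C^\alpha$ bounds), which requires a further order of kernel smoothness that a single discrete difference of $a_n$ cannot provide.

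The missing ingredient is the classical $L^2$-averaging (Plancherel plus Cauchy--Schwarz) step on each dyadic block, which upgrades the pointwise estimate to a \emph{uniform} $L^1$ bound. By Parseval, $\|K_j\|_{L^2(\T)}^2\lesssim M^2 2^j$ (from $|a_n|\leqslant M$ and the $\sim 2^j$ nonzero terms) and $\|(1-e^{i\cdot})K_j\|_{L^2(\T)}^2\lesssim M^2 2^{-j}$ (from $|n(a_{n+1}-a_n)|\leqslant M$, $|\phi_j(n)-\phi_j(n-1)|\lesssim 2^{-j}$, and the same $\sim 2^j$ term count). Splitting $\int_\T|K_j|$ at $|\theta|=2^{-j}$ and applying Cauchy--Schwarz on each piece --- against the constant function on $\{|\theta|\leqslant 2^{-j}\}$ and against $|1-e^{i\theta}|^{-1}\in L^2(\{2^{-j}\leqslant|\theta|\leqslant\pi\})$ on the complement --- yields $\|K_j\|_{L^1(\T)}\lesssim M$ uniformly in $j$. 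Then $\|\Delta_j Th\|_{L^\infty}\leqslant C\|K_j\|_{L^1}\|\tilde\Delta_j h\|_{L^\infty}\lesssim 2^{-j\alpha}\|h\|_{C^\alpha}$ and the lemma follows. It is precisely this $L^2$ trick, and not pointwise cancellation, that lets a single derivative (difference) of the symbol suffice in dimension one.
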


\subsection{Bessel functions and Hankel transform}\label{subsec:Bessel}

In this subsection we collect some useful properties about Bessel functions and Hankel transform.
We recall for instance from  \cite[Chapter 3]{Wat96} that
\begin{align}
  J_\nu(\mathbf{z})=\sum_{n=0}^{\infty}\frac{(-1)^n(\frac{\mathbf{z}}{2})^{\nu+2n}}{n!(\Gamma(\nu+n+1))},\quad \forall \mathbf{z},\nu \in \C,\\
  J_{\nu -1}(\mathbf{z})-J_{\nu+1}(\mathbf{z})=2J'_{\nu}(\mathbf{z}),\quad \forall \mathbf{z},\nu \in \C,\label{bessel:recurrence} \\
  \frac{\dd}{\dd \mathbf{z}}(\mathbf{z}^{\nu}J_{\nu}(\mathbf{z}))=\mathbf{z}^\nu  J_{\nu-1}(\mathbf{z}),\quad \forall \mathbf{z},\nu \in \C.\label{bessel:derivative}
\end{align}
In particular, when $\nu = n$ is an integer, then we have according to  \cite[Chapter 2]{Wat96}
\begin{align*}
  J_n(\mathbf{z})& = \frac{1}{2\pi}\int_0^{2\pi}\cos(n\theta - \mathbf{z}\sin \theta)\dd \theta,\quad \forall \mathbf{z}\in \C, \\
  J_{-n}(\mathbf{z}) & = (-1)^n J_n(\mathbf{z}),\quad \forall \mathbf{z} \in \C.
\end{align*}
Next, we shall introduce  Bessel functions of imaginary argument also called modified Bessel functions of first and second kind, see for instance \cite[p. 66]{MOS66},
\begin{equation}\label{I-m-q}
  \mathbf{I}_\nu(\mathbf{z})=\sum_{n=0}^\infty \frac{\left(\frac{\mathbf{z}}{2}\right)^{\nu+2n}}{n!\Gamma(\nu+n+1)},\quad \nu\in \C,\, |\mbox{arg}(\mathbf{z})|<\pi,
\end{equation}
and
\begin{align*}
  \mathbf{K}_\nu(\mathbf{z})=\frac{\pi}{2}\frac{\mathbf{I}_{-\nu}(\mathbf{z})-
  \mathbf{I}_\nu(\mathbf{z})}{\sin(\nu\pi)},\quad\nu\in\mathbb{C}\setminus\mathbb{Z},\,|\mbox{arg}(\mathbf{z})|<\pi.
\end{align*}
When $\nu=j\in\mathbb{Z}$, $\mathbf{K}_{j}$ is defined through the formula
$\mathbf{K}_j(\mathbf{z})=\displaystyle\lim_{\nu\rightarrow j}\mathbf{K}_\nu(\mathbf{z})$. From   \cite[8.432.1]{GR15}, we recall the following integral representation
\begin{align}\label{eq:Knu-exp}
  \mathbf{K}_\nu(\mathbf{z}) = \int_0^{\infty} e^{-\mathbf{z}\cosh s} \cosh(\nu s) \dd s,\quad
  \forall \nu \in \C,\,|\mathrm{arg}(\mathbf{z})|<\tfrac{\pi}{2}.
\end{align}
Another useful identity that can be found in \cite[8.432.3]{GR15} deals with the representation in terms of Laplace transform,\begin{align}\label{eq:Knu-Lap}
  \mathbf{K}_\nu(\mathbf{z})= \frac{(\frac{\mathbf{z}}{2})^\nu \Gamma(\frac{1}{2})}{\Gamma(\nu+\frac{1}{2})}
  \int_1^{\infty} e^{-s\mathbf{z}} (s^2-1)^{\nu -\frac{1}{2}} \dd s>0,\quad \mathrm{Re}\,\big(\nu+\tfrac{1}{2}\big)>0,\,
  |\mathrm{arg}(\mathbf{z})|<\tfrac{\pi}{2}.
\end{align}
For $\mathbf{I}_n(x)$ and $\mathbf{K}_n(x)$, we have the asymptotic expansion of small argument (e.g. see \cite[p. 375]{AbS64})
\begin{align}\label{eq:InKn-asymp}
  \forall n\in \mathbb{N}^\star,\quad \mathbf{I}_n(x) \stackrel{x\rightarrow 0}\sim \frac{(\frac{1}{2}x)^n}{\Gamma(n+1)},
  \quad \textrm{and}\quad \mathbf{K}_n(x) \stackrel{x\rightarrow 0}\sim \frac{\Gamma(n)}{2 (\frac{1}{2}x)^n}\cdot
\end{align}
The following Nicholson's integral representation of $\mathbf{I}_n(\mathbf{z})\mathbf{K}_n(\mathbf{z})$
is useful in the sequel, see for instance \cite[p. 441]{Wat96}. For $n\in\mathbb{N}$,
\begin{align}\label{eq:Nicholson}
  \mathbf{I}_n(\mathbf{z})\mathbf{K}_n(\mathbf{z})
  =\frac{2(-1)^n}{\pi}\int_0^{\frac{\pi}{2}}\mathbf{K}_0(2\mathbf{z}\cos \theta)\cos (2n\theta)\dd \theta.
\end{align}
\vskip0.5mm
\noindent The following  useful result states that the eigenvalues and eigenfunctions of the spectral Laplacian $-\Delta$
on the unit disc $\mathbb{D}\subset \mathbb{R}^2$ have precise expression formula through Bessel functions
(e.g. see Section 5.5 of Chapter V in \cite{CH09}).
\begin{lemma}\label{lem:disc-exp}
The eigenvalues and the eigenfunctions solving the spectral problem
\begin{align}\label{def:phi-j2}
  \hbox{for}\; j\geqslant1,\,\;\;-\Delta\phi_j = \lambda_j \phi_j,\quad \phi_j |_{\partial \mathbb{D}} =0,\quad
  \int_{\mathbb{D}}\phi_j^2(x) \dd x=1.
\end{align}
are described by double index families $(\lambda_{n,k})_{n\in \mathbb{N},k\in \mathbb{N}^\star}$
and $\big((\phi_{n,k}^{(1)}, \phi_{n,k}^{(2)})\big)_{n\in\mathbb{N},k\in\mathbb{N}^\star}$ such that
\begin{align}
  \lambda_{n,k}=x_{n,k}^2, \quad \phi_{n,k}^{(1)}(x)=J_n(x_{n,k}|x|)A_{n,k}\cos(n\theta),
  \quad \phi_{n,k}^{(2)}(x)=J_n(x_{n,k}|x|)A_{n,k}\sin(n\theta),
\end{align}
where
\begin{align}\label{eq:Ank}
  \pi A_{0,k}^2=\frac{1}{J_{1}^2(x_{0,k})}\quad\hbox{and}\quad
  \pi A_{n,k}^2=\frac{2}{J_{n+1}^2(x_{n,k})},\quad\forall n\in\mathbb{N}^\star,
\end{align}
and $J_n$  denotes the Bessel function of order $n$  and $(x_{n,k})_{k\in\mathbb{N}^\star}$ are its zeroes.
\end{lemma}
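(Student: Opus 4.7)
The plan is to solve the eigenvalue problem \eqref{def:phi-j2} by the classical method of separation of variables in polar coordinates, and then identify the normalization constants from a standard Bessel orthogonality identity. Write $\mathbf{x}=(r\cos\theta,r\sin\theta)$ with $r\in[0,1]$, $\theta\in\mathbb{T}$, so that the Laplacian reads
\begin{equation*}
  \Delta = \partial_r^2 + \tfrac{1}{r}\partial_r + \tfrac{1}{r^2}\partial_\theta^2 .
\end{equation*}
Seeking eigenfunctions of the separated form $\phi(r,\theta)=R(r)\Theta(\theta)$ and inserting into $-\Delta\phi=\lambda\phi$, one obtains, after the standard separation constant argument, $\Theta''(\theta)+n^2\Theta(\theta)=0$ together with
\begin{equation*}
  r^2 R''(r) + r R'(r) + (\lambda r^2 - n^2) R(r) = 0.
\end{equation*}
The $2\pi$-periodicity of $\Theta$ forces the separation constant to be $n^2$ with $n\in\mathbb{N}$, yielding the two-dimensional eigenspace spanned by $\cos(n\theta)$ and $\sin(n\theta)$ (a one-dimensional space when $n=0$).

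For the radial part, the change of variable $\rho=\sqrt{\lambda}\,r$ reduces the equation to the Bessel equation of order $n$. Demanding regularity at $r=0$ selects the Bessel function of the first kind, so $R(r)=c\,J_n(\sqrt{\lambda}\,r)$. The Dirichlet condition $R(1)=0$ becomes $J_n(\sqrt{\lambda})=0$, which pins $\sqrt{\lambda}$ to the positive zeros of $J_n$. Ordering these zeros as $(x_{n,k})_{k\in\mathbb{N}^\star}$, we get $\lambda_{n,k}=x_{n,k}^2$ and the radial profile $J_n(x_{n,k}|\mathbf{x}|)$. The family of candidate eigenfunctions is therefore $\phi_{n,k}^{(1)}(\mathbf{x})=A_{n,k}J_n(x_{n,k}|\mathbf{x}|)\cos(n\theta)$ and $\phi_{n,k}^{(2)}(\mathbf{x})=A_{n,k}J_n(x_{n,k}|\mathbf{x}|)\sin(n\theta)$, with the understanding that $\phi_{0,k}^{(2)}\equiv 0$ is discarded.

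Completeness of this family in $L^2(\mathbb{D})$, and the fact that no further eigenvalues appear, follow from the standard spectral theorem for the compact self-adjoint operator $(-\Delta)^{-1}:L^2(\mathbb{D})\to L^2(\mathbb{D})$ (e.g.\ \cite[Sec.~6.5]{Evans10}), combined with density of the tensor products of angular Fourier modes and the Bessel Sturm-Liouville basis in the radial direction. The normalization constants $A_{n,k}$ are computed from the classical orthogonality identity
\begin{equation*}
  \int_0^1 r\, J_n(x_{n,k} r)^2 \,\mathrm{d}r = \tfrac{1}{2} J_{n+1}(x_{n,k})^2,
\end{equation*}
which is a direct consequence of \eqref{bessel:derivative} after an integration by parts using $J_n(x_{n,k})=0$. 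Passing to polar coordinates,
\begin{equation*}
  \int_{\mathbb{D}} (\phi_{n,k}^{(j)})^2 \,\mathrm{d}\mathbf{x}
  = A_{n,k}^2 \Bigl(\int_0^{2\pi}\!\!\cos^2(n\theta)\,\mathrm{d}\theta\Bigr)\Bigl(\int_0^1 r\,J_n(x_{n,k}r)^2\,\mathrm{d}r\Bigr),
\end{equation*}
and setting this equal to $1$ immediately gives $\pi A_{0,k}^2=1/J_1(x_{0,k})^2$ (since the angular factor is $2\pi$ when $n=0$) and $\pi A_{n,k}^2=2/J_{n+1}(x_{n,k})^2$ for $n\geqslant 1$ (since the angular factor is $\pi$).

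The only genuinely non-algorithmic step is the completeness claim; everything else is elementary separation of variables plus the Bessel $L^2$-norm computation. Since the paper only needs the expansion form \eqref{eq:K-exp-QGSW-disc} and its analog for the gSQG kernel, one can alternatively invoke completeness as a black-box consequence of general elliptic spectral theory and treat the lemma as an explicit parameterization of the abstract eigenbasis.
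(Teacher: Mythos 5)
Your proof is correct and is exactly the classical separation-of-variables argument; the paper itself offers no proof for this lemma, citing Courant--Hilbert (Section 5.5 of Chapter V in \cite{CH09}) as a black box, and what you have written is precisely the content of that reference. One small imprecision: the $L^2$ normalization identity $\int_0^1 r J_n(x_{n,k}r)^2\,\mathrm{d}r = \tfrac12 J_{n+1}(x_{n,k})^2$ is not quite ``a direct consequence of \eqref{bessel:derivative}'' alone --- the standard derivation multiplies the radial Bessel ODE by $2r R'(r)$, integrates, and then converts $J_n'(x_{n,k})$ to $-J_{n+1}(x_{n,k})$ via \eqref{bessel:recurrence} at a zero of $J_n$; but since this is a textbook Lommel integral the imprecision is cosmetic, not a gap.
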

\noindent We also have the following Kneser-Sommerfeld expansion (e.g. see \cite[Eq. (12)]{Mar22} or \cite[p. 134]{MOS66})
involving the zeros of Bessel functions:
\begin{align}\label{eq:formula-sum-Bessel}
  \sum_{k=1}^\infty \frac{1}{\mathbf{z}^2+x^2_{\nu,k}}\frac{J_\nu(X x_{\nu,k})J_\nu(Y x_{\nu,k})}{J^2_{\nu+1}(x_{\nu,k})}
  =  \frac{1}{2}\frac{\mathbf{I}_\nu(X\mathbf{z})}{\mathbf{I}_\nu(\mathbf{z})}
  \Big(\mathbf{I}_\nu(z) \mathbf{K}_\nu(Y\mathbf{z})-\mathbf{K}_\nu(\mathbf{z})\mathbf{I}_\nu(Y\mathbf{z})\Big),
\end{align}
where $(x_{\nu,k})_{k\in \mathbb{N}^\star}$ are $k$-th zeros of $J_{\nu}(x)$ on the positive real axis and
$\nu \in \C\setminus \{-\mathbb{N}^\star\}$, $0\leqslant X\leqslant Y\leqslant 1$, $\mathbf{z}\in \C$.
\\[0.5mm]
In what follows we shall discuss some basic properties of the Hankel transform, and we refer the readers for instance to
 \cite[Chap. 9]{PA96}. First,
recall that the $\nu$-th order Hankel transform of $f:(0,\infty)\to \R$ is defined as
\begin{align}\label{def:HankelTrans}
  \forall \, r>0,\quad   \mathcal{H}_{\nu}f(r)\triangleq \int_0^{\infty}x f(x)J_\nu(r x)\dd x.
\end{align}
This transformation  is well-defined for example when $f$ is piecewise continuous and subject to the integrability condition $\displaystyle{\int_0^\infty|f(r)|\sqrt{r}\dd r}$. Furthermore,
under the following assumptions that $f$ is of class $C^2$ and
\begin{align}\label{cond:Hankel}
  \lim_{x\to \infty} x^{\frac{1}{2}}f(x)=0,\quad \lim_{x\to \infty}x^{\frac{1}{2}}f'(x)=0,\quad \lim_{x\to 0}xf(x)=0,
\end{align}
we have
\begin{align}\label{hankel:diff}
  \mathcal{H}_\nu \Big(\Big(\frac{\dd^2 }{\dd x^2}+\frac{1}{x}\frac{\dd }{\dd x}
  -\frac{{\nu}^2}{x^2}\Big)f(x)\Big)=-r^2\mathcal{H}_{\nu}f(r).
\end{align}
For $\nu>-\frac{1}{2}$, we also have
\begin{align}\label{hankel:inverse}
  \mathcal{H}_{\nu}^2f(x)=f(x).
\end{align}

\subsection{Crandall-Rabinowitz's theorem}

The Crandall-Rabinowitz theorem from the local bifurcation theory plays a fundamental role in our paper, and for the proof we refer to \cite{C-R71}.
\begin{theorem}[Crandall-Rabinowitz's theorem]\label{thm:C-R}
Let $X$ and $Y$ be two Banach spaces, $V$ a neighborhood of $0$ in $X$ and let
$F : \R \times V \to Y$ be with the following  properties:
\begin{enumerate}
\item $F (\lambda, 0) = 0$ for any $\lambda\in \R$.
\item The partial derivatives $\partial_\lambda F$, $\partial_x F$ and $\partial_\lambda \partial_{x}F$ exist and are continuous.
\item $N(\mathcal{L}_0)$ and $Y/R(\mathcal{L}_0)$ are one-dimensional.
\item {\it Transversality assumption}: $\partial_\lambda \partial_x F(0, 0)x_0 \not\in R(\mathcal{L}_0)$, where
\begin{align*}
  N(\mathcal{L}_0) = \mathrm{span}\{x_0\}, \quad \mathcal{L}_0\triangleq \partial_x F(0,0).
\end{align*}
\end{enumerate}
If $Z$ is any complement of $N(\mathcal{L}_0)$ in $X$, then there is a neighborhood $U$ of $(0,0)$ in $\R \times X$,
an interval $(-a,a)$, and continuous functions $\varphi: (-a,a) \to \R$,
$\psi: (-a,a) \to Z$ such that $\varphi(0) = 0$, $\psi(0) = 0$ and
\begin{align*}
  F^{-1}(0)\cap U=\Big\{\big(\varphi(\xi), \xi x_0+\xi\psi(\xi)\big)\,:\,\vert \xi\vert<a\Big\}
  \cup\Big\{(\lambda,0)\,:\, (\lambda,0)\in U\Big\}.
\end{align*}
\end{theorem}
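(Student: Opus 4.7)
The plan is to prove the theorem via the classical Lyapunov--Schmidt reduction, which converts the infinite-dimensional equation $F(\lambda,x)=0$ into a scalar bifurcation equation on $\mathbb{R}\times\mathbb{R}$. First I would fix the splittings: write $X = N(\mathcal{L}_0)\oplus Z$ with $N(\mathcal{L}_0)=\mathrm{span}\{x_0\}$ (this $Z$ exists as stated in the hypotheses), and choose a one-dimensional complement $W$ of $R(\mathcal{L}_0)$ in $Y$, which is possible because $Y/R(\mathcal{L}_0)$ is one-dimensional and, as part of the usual formulation, $R(\mathcal{L}_0)$ is closed. Let $P\colon Y\to R(\mathcal{L}_0)$ and $Q=\mathrm{Id}-P\colon Y\to W$ be the associated continuous projections. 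Any $x\in X$ near $0$ then decomposes uniquely as $x = \xi x_0 + z$ with $\xi\in\mathbb{R}$ and $z\in Z$, and the equation $F(\lambda,x)=0$ is equivalent to the pair
\begin{equation*}
PF(\lambda,\xi x_0+z)=0, \qquad QF(\lambda,\xi x_0+z)=0.
\end{equation*}

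The second step would be to solve the ``range equation'' $PF(\lambda,\xi x_0 + z)=0$ for $z$ as a function of $(\lambda,\xi)$. Define $\Phi\colon \mathbb{R}\times\mathbb{R}\times Z \to R(\mathcal{L}_0)$ by $\Phi(\lambda,\xi,z) \triangleq PF(\lambda,\xi x_0+z)$. Then $\Phi(0,0,0)=0$, and its partial derivative with respect to $z$ at $(0,0,0)$ is $P\mathcal{L}_0|_Z$, which is a bijection from $Z$ onto $R(\mathcal{L}_0)$: surjectivity holds because $\mathcal{L}_0(X)=R(\mathcal{L}_0)$ and $\mathcal{L}_0|_{N(\mathcal{L}_0)}=0$, and injectivity because $\ker(\mathcal{L}_0|_Z) = Z\cap N(\mathcal{L}_0)=\{0\}$. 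The $C^1$ regularity hypotheses on $F$ give enough smoothness to apply the implicit function theorem, producing a $C^1$ map $z=z(\lambda,\xi)$ near $(0,0)$ with $z(0,0)=0$. Moreover, differentiating $F(\lambda,0)=0$ in $\lambda$ shows that $z(\lambda,0)\equiv 0$, so we may write $z(\lambda,\xi)=\xi\,\widetilde{\psi}(\lambda,\xi)$ with $\widetilde{\psi}$ continuous and $\widetilde{\psi}(0,0)=\partial_\xi z(0,0)=0$.

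The third step is to analyze the remaining scalar equation $g(\lambda,\xi) \triangleq QF(\lambda,\xi x_0 + z(\lambda,\xi))=0$. Since $F(\lambda,0)=0$ one has $g(\lambda,0)\equiv 0$, so by the standard division trick $g(\lambda,\xi) = \xi\,h(\lambda,\xi)$ with $h$ continuous (and continuously differentiable in $\lambda$); the trivial branch $\{(\lambda,0)\}$ corresponds to $\xi=0$, while nontrivial solutions are zeros of $h$. A direct computation using $z(\lambda,0)\equiv 0$ gives $h(0,0) = Q\partial_\lambda\partial_x F(0,0)\,x_0$, and the transversality assumption says precisely that this element is nonzero in the one-dimensional space $W$, i.e.\ $h(0,0)\ne 0$ after identification. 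Wait -- the correct statement is $h(0,0) = 0$ and $\partial_\lambda h(0,0)$ is the transversality quantity. Indeed, $\partial_\xi g(0,0) = Q\mathcal{L}_0 x_0 = 0$, so $h(0,0)=0$; differentiating the identity $g=\xi h$ in $\lambda$ at $(0,0)$ and again in $\xi$ at $(0,0)$ yields $\partial_\lambda h(0,0) = Q\partial_\lambda\partial_x F(0,0)\,x_0$, which is nonzero by transversality. The implicit function theorem then gives a unique $C^1$ function $\lambda=\varphi(\xi)$ with $\varphi(0)=0$ solving $h(\varphi(\xi),\xi)=0$ for $|\xi|<a$, and setting $\psi(\xi)=\widetilde{\psi}(\varphi(\xi),\xi)$ produces the claimed nontrivial branch $\bigl(\varphi(\xi),\xi x_0 + \xi\psi(\xi)\bigr)$.

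The main obstacle is handling the bookkeeping of the transversality condition cleanly: one must carefully track how $Q\partial_\lambda\partial_x F(0,0)x_0$ enters the derivative of $h$, and verify it is exactly the nondegeneracy needed by the implicit function theorem. A secondary technical point is the justification that $R(\mathcal{L}_0)$ is closed and admits a complement (needed to define $P,Q$), which is implicit in the statement $Y/R(\mathcal{L}_0)$ being one-dimensional; all other steps reduce to two applications of the classical implicit function theorem in Banach spaces.
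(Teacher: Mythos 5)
The paper itself contains no proof of this theorem; it simply cites the original Crandall--Rabinowitz article. So the comparison is between your Lyapunov--Schmidt plan and the argument in \cite{C-R71}, which uses a different device. There is one genuine gap in your plan that the original device was designed precisely to avoid.

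Your step 3 applies the implicit function theorem to $h(\lambda,\xi)=g(\lambda,\xi)/\xi$ (extended by $h(\lambda,0)=\partial_\xi g(\lambda,0)$), solving for $\lambda$. For this you need $\partial_\lambda h$ to exist and be continuous in a full neighborhood of $(0,0)$ -- not just that $\partial_\lambda h(0,0)$ exists. But the hypotheses only give you $\partial_\lambda F$, $\partial_x F$ and $\partial_\lambda\partial_x F$ continuous; there is no assumption that $\partial_x^2 F$ exists. Now write $h(\lambda,\xi)=\int_0^1\partial_\xi g(\lambda,t\xi)\,dt$, with
\begin{align*}
\partial_\xi g(\lambda,\xi)=Q\,\partial_x F\bigl(\lambda,\xi x_0+z(\lambda,\xi)\bigr)\bigl(x_0+\partial_\xi z(\lambda,\xi)\bigr).
\end{align*}
Differentiating the integrand in $\lambda$ at a point with $\xi\ne 0$ hits, by the chain rule, a term of the form $\partial_x^2 F\bigl(\lambda,\cdot\bigr)\,\partial_\lambda z$, and also forces you to differentiate $\partial_\xi z$ in $\lambda$, which by the range equation again brings in $\partial_x^2 F$. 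At $\xi=0$ these terms vanish because $z(\lambda,0)\equiv 0$ and $\partial_\lambda z(\lambda,0)\equiv 0$, which is why your computation of $h(\lambda,0)$ and $\partial_\lambda h(0,0)$ goes through; but for $\xi\ne 0$ they do not, and under the stated hypotheses you cannot conclude that $\partial_\lambda h$ is well defined. So the final IFT application is not justified as written.

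The way Crandall and Rabinowitz get around this is not a Lyapunov--Schmidt splitting followed by a scalar division, but a single rescaled IFT. Writing $x=\xi(x_0+w)$ with $w\in Z$, they define
\begin{align*}
\widetilde F(\lambda,\xi,w)\triangleq\begin{cases}\xi^{-1}F\bigl(\lambda,\xi(x_0+w)\bigr),&\xi\ne 0,\\ \partial_x F(\lambda,0)(x_0+w),&\xi=0,\end{cases}
\end{align*}
show that $\widetilde F$ is continuous and that $\partial_\lambda\widetilde F$, $\partial_w\widetilde F$ exist and are continuous using only the stated hypotheses (the integral representation $\widetilde F=\int_0^1\partial_x F\bigl(\lambda,t\xi(x_0+w)\bigr)(x_0+w)\,dt$ is the key, and differentiating it in $\lambda$ lands on $\partial_\lambda\partial_x F$, which you do have), and then verify that
\begin{align*}
\partial_{(\lambda,w)}\widetilde F(0,0,0)\colon(\dot\lambda,\dot w)\longmapsto \dot\lambda\,\partial_\lambda\partial_x F(0,0)x_0+\mathcal L_0\dot w
\end{align*}
is an isomorphism of $\mathbb R\times Z$ onto $Y$ -- this is exactly where transversality together with $\mathcal L_0 Z=R(\mathcal L_0)$ and codimension one come in. A single IFT then yields $(\varphi,\psi)$ at once, and the missing $\partial_x^2 F$ never appears. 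Your plan would be complete if you either strengthened the hypotheses to $F\in C^2$, or replaced step 3 by this rescaled formulation.

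A minor point, which you flag correctly but somewhat vaguely: to define the projections $P,Q$ you need $R(\mathcal L_0)$ closed, which does not follow from the purely algebraic statement that $Y/R(\mathcal L_0)$ is one-dimensional; it must be taken as part of the hypothesis (as in C--R, where $\mathcal L_0$ is assumed Fredholm of index zero, or the range is assumed closed outright).
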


\bibliographystyle{plain}

 \end{document}